\newtheorem{thmintro}{Theorem}
\newtheorem{corintro}[thmintro]{Corollary}
\newtheorem{propintro}[thmintro]{Proposition}
\newtheorem{theorem}{Theorem}[section]
\newtheorem{corollary}[theorem]{Corollary}
\newtheorem{lemma}[theorem]{Lemma}
\newtheorem{prop}[theorem]{Proposition}
\theoremstyle{definition}
\newtheorem{remark}[theorem]{Remark}
\newtheorem{remarkintro}{Remark}
\newtheorem{definition}[theorem]{Definition}
\newcommand{\CC}{\mathbb{C}}
\newcommand{\Ga}{\mathbb{G}_a}
\newcommand{\FF}{\mathbb{F}}
\newcommand{\KK}{\mathbb{K}}
\newcommand{\NN}{\mathbb{N}}
\newcommand{\ZZ}{\mathbb{Z}}
\newcommand{\CCC}{\mathcal{C}}
\newcommand{\DDD}{\mathcal{D}}
\newcommand{\WW}{\mathcal{W}}
\newcommand{\B}{\mathfrak{B}}
\newcommand{\G}{\mathfrak{G}}
\newcommand{\PP}{\mathfrak{P}}
\newcommand{\T}{\mathfrak{T}}
\newcommand{\U}{\mathfrak{U}}
\newcommand{\X}{\mathfrak{X}}
\newcommand{\oU}{\overline{U}}
\newcommand{\oB}{\overline{B}}
\newcommand{\oP}{\overline{P}}
\newcommand{\oG}{\overline{G}}
\newcommand{\opi}{\overline{\pi}}
\newcommand{\wpi}{\widetilde{\pi}}
\newcommand{\inv}{^{-1}}
\newcommand{\sinv}{^{\thinspace -1}}
\newcommand{\co}{\colon\thinspace}
\newcommand{\Stt}{\mathfrak{St}}
\newcommand{\Zalg}{\mathbb{Z}\mathrm{-alg}}
\newcommand{\Gr}{\mathrm{Grp}}
\newcommand{\CDem}{\mathfrak{CD}}
\DeclareMathOperator{\Aut}{Aut}
\DeclareMathOperator{\GL}{GL}
\DeclareMathOperator{\Ch}{Ch}
\DeclareMathOperator{\height}{ht}
\DeclareMathOperator{\Id}{Id}
\DeclareMathOperator{\ma}{ma}
\DeclareMathOperator{\pma}{pma}
\DeclareMathOperator{\SL}{SL}
\DeclareMathOperator{\SSL}{E}
\DeclareMathOperator{\Hom}{Hom}
\DeclareMathOperator{\sico}{sc}
\DeclareMathOperator{\C}{C}
\DeclareMathOperator{\GE}{GE_2}
\DeclareMathOperator{\op}{op}
\DeclareMathOperator{\Opp}{Opp}
\numberwithin{equation}{section}
\begin{document}

\renewcommand{\proofname}{{\bf Proof}}

\title{Presentation and uniqueness of Kac--Moody groups over local rings}
\author{Timoth\'ee Marquis}
\address{Université Catholique de Louvain, IRMP, 1348 Louvain-la-Neuve, Belgium}
\email{timothee.marquis@uclouvain.be}
\author{Bernhard M\"uhlherr}
\address{Mathematisches Institut, Justus--Liebig--Universität Gie{\ss}en, 35392 Gie{\ss}en, Germany}
\email{bernhard.m.muehlherr@math.uni-giessen.de}
\thanks{TM is a F.R.S.-FNRS Research associate, and is supported in part by the FWO and the F.R.S.-FNRS under the EOS programme (project ID 40007542).}
\subjclass[2020]{20G44, 20E42, 20F05, 19C20}

\begin{abstract}
To any generalised Cartan matrix (GCM) $A$ and any ring $R$, Tits associated a Kac--Moody group $\mathfrak{G}_A(R)$ defined by a presentation \`a la Steinberg. For a domain $R$ with field of fractions $\mathbb{K}$, we explore the question of whether the canonical map $\varphi_R\colon\thinspace \mathfrak{G}_A(R)\to \mathfrak{G}_A(\mathbb{K})$ is injective. This question for Cartan matrices has a long history, and for GCMs was already present in Tits' foundational papers on Kac--Moody groups.\\
We prove that for any $2$-spherical GCM $A$, the map $\varphi_R$ is injective for all valuation rings $R$ (under an additional minor condition (co)). To the best of our knowledge, this is the first such injectivity result beyond the classical setting.
\end{abstract}

\maketitle

\thispagestyle{empty}
\section{Introduction}

To any reduced root system $\Phi$, one can associate an affine group scheme $\CDem_{\Phi}$ over $\ZZ$, namely the (universal) \emph{Chevalley--Demazure group scheme} of type $\Phi$, such that $\CDem_{\Phi}(\CC)$ is the (universal) complex semisimple algebraic group $G_{\CC}$ with root system $\Phi$; it is characterised by a few simple properties (see \cite{Dem65}), making it the natural analogue of $G_{\CC}$ over any (commutative, unital) ring $R$. The value of $\CDem_{\Phi}$ over a ring $R$ is called a \emph{Chevalley group} over $R$.

When $R$ is a field, Steinberg \cite{St68} proved that $\CDem_{\Phi}(R)$ admits a presentation with generators $\{x_{\alpha}(r) \ | \ \alpha\in\Phi, \ r\in R\}$, subject to a few relations: one first introduces the \emph{Steinberg group} $\mathrm{St}(\Phi,R)$ obtained by ensuring that the sets $\{x_{\alpha}(r) \ | \ r\in R\}$ are copies of $(R,+)$ satisfying certain commutation relations, and one obtains the desired presentation by adding relations corresponding to so-called \emph{Steinberg symbols} (see Remark~\ref{remark:presentation_Chevalley_groups} for precise definitions). Denoting by $G_{\Phi}(R)$ the group defined by this presentation over any ring $R$, Steinberg showed that there is a natural morphism $\varphi_R\co G_{\Phi}(R)\to \CDem_{\Phi}(R)$ which is an isomorphism whenever $R$ is a field.

Since then, determining for which rings $R$ the map $\varphi_R$ is injective (a ring satisfying this property is called \emph{universal} for $\Phi$ in \cite{AMor88}) has been an active topic of research, notably in the context of algebraic $K$-theory: the kernel of $\mathrm{St}(\Phi,R)\to \CDem_{\Phi}(R)$ is denoted $K_2(\Phi,R)$, and the injectivity of $\varphi_R$ amounts to $K_2(\Phi,R)$ being generated by Steinberg symbols. Besides \cite{St68}, a few early milestones regarding this question include the proofs of universality (for all $\Phi$) of the ring of integers (\cite{Mil71}, \cite{HR75}, \cite{Behr75}), of local or even semilocal rings with at most one residue field $\FF_2$ (\cite{Cohn66}, \cite{Ste73}), of polynomial rings $\KK[t]$ and Laurent polynomial rings $\KK[t,t\inv]$ for $\KK$ a field (\cite{Reh75}, \cite{Mor82}), and of rings of the form $\ZZ[1/p_1,\dots,1/p_r]$ for suitable finite sets of primes $p_1,\dots,p_r$ (\cite{AMor88}). As a more recent result, let us for instance mention \cite[Theorem~1.3]{Sin25}, showing that for a Dedekind domain $R$, the polynomial ring $R[t_1,\dots,t_n]$ is universal for some root systems of large rank provided $R$ is. Note that, despite all these positive results, even Euclidean domains such as $\ZZ[1/p]$ for a prime $p\geq 5$ are not universal (see \cite[p.461]{AMor88}).

Around the 1970's, constructions of algebras and groups attached to \emph{generalised Cartan matrices} (namely, the \emph{Kac--Moody algebras} and \emph{Kac--Moody groups}, see \cite{Kac} and \cite{Tits87}) began to emerge, first as (infinite-dimensional) generalisations of semisimple Lie algebras and algebraic groups, and progressively becoming important objects of study with a very rich theory in a variety of domains, including geometric group theory, algebraic geometry, representation theory and theoretical physics (see e.g. \cite{KMGbook} and the references therein).

More precisely, to any generalised Cartan matrix (GCM) $A=(a_{ij})_{i,j\in I}$, Tits associated in \cite{Tits87} a group functor $\G_A$ over the category of rings\footnote{To simplify the exposition, we only mention in this introduction the Kac--Moody group functors of \emph{simply connected type} --- see Remark~\ref{remarkintro:12}(2) concerning the general case.}, defined at each ring $R$ by a presentation \emph{à la Steinberg} (in particular, $\G_A(R)$ coincides with $G_{\Phi}(R)$ for $A$ a Cartan matrix with root system $\Phi$).  On the other hand, Mathieu constructed in \cite{M89} an ind-group scheme $\G^{\pma}_A$, which coincides with $\CDem_{\Phi}$ for $A$ a Cartan matrix with root system $\Phi$ (see \cite[3.8]{Rou16}), and such that for each ring $R$, there is a natural morphism $\varphi_R\co\G_A(R)\to \G^{\pma}_A(R)$. When $R$ is a field, this morphism is injective, but its image is much smaller than $\G^{\pma}_A(R)$: in fact, $\G^{\pma}_A(R)$ is naturally a Hausdorff topological group in which $\G_A(R)$ embeds as a dense subgroup (at least in the generic case). The groups $\G_A(R)$ and $\G^{\pma}_A(R)$ for $R$ a field are then referred to as \emph{minimal} and \emph{maximal} Kac--Moody groups (where ``minimal'' should be understood as ``being generated by the $|I|$ copies of $\SL_2(R)$ attached to the simple roots''). In \cite{Tits87}, Tits also asserted the existence of a group functor associated to $A$ and satisfying a few simple axioms which any reasonable ``minimal Kac--Moody group functor'' ought to satisfy, and proves that the restriction of such a functor to the category of fields is uniquely determined (and coincides with $\G_A$). Such a functor $\G_A^{\min}$ turns out to be unique (at least over domains, see \cite[Proposition~8.129]{KMGbook}): for each ring $R$, the group $\G_A^{\min}(R)$ can be constructed as the subgroup of $\G^{\pma}_A(R)$ generated by the $|I|$ fundamental copies of $\SL_2(R)$ (see \cite[\S8.8]{KMGbook}). In particular, $\varphi_R$ can be viewed as a morphism $$\varphi_R\co\G_A(R)\to \G_A^{\min}(R)\subseteq\G^{\pma}_A(R),$$ or if $R$ is a domain with field of fractions $\KK$, as the canonical map $\G_A(R)\to \G_A(\KK)$ since $\G_A^{\min}(R)\subseteq\G_A^{\min}(\KK)=\G_A(\KK)$.

The question of whether $\G_A$ is the ``good'' minimal Kac--Moody group functor over certain categories of rings beyond fields, or in other words, of knowing for which rings $R$ the map $\varphi_R\co\G_A(R)\to\G_A^{\min}(R)$ is injective, is already very much
 present in Tits' foundational papers (see e.g. \cite[\S6]{Tits85} and \cite[\S3.5]{Tits89}). In this paper, we prove that for any \emph{$2$-spherical} GCM $A=(a_{ij})_{i,j\in I}$ (that is, such that $a_{ij}a_{ji}\leq 3$ for all $i,j\in I$ with $i\neq j$), the map $\varphi_R\co\G_A(R)\to\G_A^{\min}(R)$ is injective for any local ring $R$ that is a Bezout domain (in other words, for any valuation ring), up to a minor technical condition (co) (see below). To the best of our knowledge, this is the first such injectivity result beyond the classical setting of Cartan matrices. Note that the groups $\G_A^{\min}(R)$ over rings $R$ with a discrete valuation have been investigated over the past decade (see for instance \cite{GR14}, \cite{Rou16} and \cite{BHR25}) with the aim of extending the classical Bruhat--Tits theory \cite{BT72} of semisimple algebraic groups over fields with a discrete valuation.
 
To establish our result, we actually prove a more precise statement, namely that $\G_A^{\min}(R)$ can be presented as a Curtis--Tits amalgam: for each subset $J$ of $I$ with $|J|\leq 2$, let $G_{JR}$ denote the Chevalley group $\CDem_{\Phi_J}(R)$, where $\Phi_J$ is the root system with Cartan matrix $A_J=(a_{ij})_{i,j\in J}$. Thus, for each distinct $i,j\in I$, the group $G_{\{i,j\}R}$ is a copy of $\SL_2(R)\times\SL_2(R)$, $\SL_3(R)$, $\mathrm{Sp}_4(R)$ or of $\CDem_{G_2}(R)$, depending on whether $a_{ij}a_{ji}=0,1,2$ or $3$, while $G_{iR}$ and $G_{jR}$ are its rank $1$ subgroups, isomorphic to $\SL_2(R)$. We define the \emph{Curtis--Tits amalgam} $\mathrm{CT}_A(R)$ as the limit of the inductive system\footnote{The embeddings of the rank $1$ subgroups $G_{iR}$ in the rank $2$ subgroups $G_{\{i,j\}R}$ considered in this paper are the ``standard'' ones, namely, those such that the amalgam $\mathrm{CT}_A(R)$ has a presentation with generators $\{x_{\alpha}(r) \ | \ r\in R, \ \alpha\in\bigcup_{|J|=2}\Phi_J\}$ and relations the union over all $J\subseteq I$ with $|J|=2$ of the relations in Steinberg's presentation of $G_{JR}=\CDem_{\Phi_J}(R)$ (assuming $R$ is universal for $\Phi_J$, e.g. $R$ a local ring).} of groups $\{G_{JR} \ | \ J\subseteq I, \ |J|\leq 2\}$. 

Consider the following condition (co) for a ring $R$:
\begin{itemize}
\item[(co)] $R$ has no quotient $\FF_2$ if $a_{ij}a_{ji}=2$ for some $i,j\in I$, and $R$ has no quotient $\FF_2$ or $\FF_3$ if $a_{ij}a_{ji}=3$ for some $i,j\in I$.
\end{itemize} 

Here is our main result.

\begin{thmintro}\label{thmintro:main}
Let $A$ be a $2$-spherical GCM and $R$ be a valuation ring satisfying (co), with field of fractions $\KK$. Then the canonical morphisms $$\mathrm{CT}_A(R)\to\G_A(R)\to \G_A^{\min}(R)\subseteq\G_A(\KK)$$ are isomorphisms. In particular, the map $\G_A(R)\to\G_A(\KK)$ is injective.
\end{thmintro}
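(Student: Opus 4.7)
The plan is to show the composition $\mathrm{CT}_A(R)\to\G_A(R)\to\G_A^{\min}(R)\subseteq\G_A(\KK)$ is injective and that the first two arrows are surjective; together these facts force both morphisms to be isomorphisms. Surjectivity of $\mathrm{CT}_A(R)\to\G_A(R)$ is essentially built into Tits' Steinberg-type presentation of $\G_A(R)$: its defining relations only couple pairs of real roots (Chevalley commutator relations, torus relations, and relations witnessing the Weyl action), so they all factor through the rank $\leq 2$ blocks of the amalgam. Surjectivity of $\G_A(R)\to\G_A^{\min}(R)$ is immediate, since by construction $\G_A^{\min}(R)$ is the subgroup of $\G_A(\KK)$ generated by the rank $1$ copies of $\SL_2(R)$ attached to the simple roots, and these clearly lie in the image.

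The heart of the proof is therefore the injectivity of $\mathrm{CT}_A(R)\to\G_A(\KK)$. The plan is to factor this morphism as
\[
\mathrm{CT}_A(R)\longrightarrow\mathrm{CT}_A(\KK)\longrightarrow\G_A(\KK),
\]
where the second arrow is an isomorphism by the Curtis--Tits presentation theorem for $2$-spherical Kac--Moody groups over fields (essentially due to Abramenko--Mühlherr), for which condition (co) ensures the required input on the rank $2$ blocks. The remaining task is to establish injectivity of the amalgam morphism $\mathrm{CT}_A(R)\to\mathrm{CT}_A(\KK)$ induced by the natural inclusions $\CDem_{\Phi_J}(R)\hookrightarrow\CDem_{\Phi_J}(\KK)$ (themselves automatic, since $\CDem_{\Phi_J}$ is an affine group scheme over $\ZZ$ and $R$ is a domain).

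The main obstacle is this last step, because injectivity of the constituent morphisms of an amalgam does not imply injectivity of the induced map on amalgams. The natural line of attack is to give a geometric model of $\mathrm{CT}_A(R)$ inside $\G_A(\KK)$, namely as the setwise stabilizer of a suitable ``integral'' subcomplex $\Delta(R)$ of the twin building of $\G_A(\KK)$ cut out using the valuation on $\KK$. The key technical lemma will identify, for each $J\subseteq I$ with $|J|\leq 2$, the stabilizer in $\G_A(\KK)$ of the corresponding simplex of $\Delta(R)$ with the Chevalley group $\CDem_{\Phi_J}(R)$; once $\Delta(R)$ is shown to be simply connected, a classical amalgamation argument à la Tits identifies $\mathrm{Stab}_{\G_A(\KK)}(\Delta(R))$ with the abstract amalgam $\mathrm{CT}_A(R)$, and by construction this stabilizer equals $\G_A^{\min}(R)$. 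The hypothesis that $R$ is a valuation ring is used to construct $\Delta(R)$ and pin down its stabilizers, while condition (co) plays a double role: it guarantees that $R$ is universal for each rank $\leq 2$ root subsystem $\Phi_J$ (so that the rank $2$ blocks of the amalgam are correctly presented), and it supplies the field-level hypotheses needed for the Curtis--Tits theorem over $\KK$.
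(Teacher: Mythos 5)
Your high-level strategy is essentially the one the paper follows: realise the Curtis--Tits amalgam geometrically as a group acting chamber-transitively on a simply connected object, and then invoke a Tits/Pasini-type amalgamation theorem. However, the proposal is a plan rather than a proof, and the parts that are missing are precisely the technical heart of the argument.

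First, the detour through $\mathrm{CT}_A(\KK)$ adds nothing: you would still be left with proving injectivity of $\mathrm{CT}_A(R)\to\mathrm{CT}_A(\KK)$, which is not easier than the original problem; the paper instead applies the amalgamation theorem directly to $\G_A^{\min}(R)$.

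Second, your picture of $\Delta(R)$ as an ``integral subcomplex'' of the twin building of $\G_A(\KK)$ cut out by the valuation is not quite the right object, and the distinction matters. For a valuation ring $R$, the positive and negative chamber systems $\G_A^{\min}(R)/\oB^{\pm}_R$ turn out to be \emph{isomorphic} to the chamber systems of the twin building of $G_{\KK}$ itself (Lemma~\ref{lemma:CCCsimplyconnectedKM}); what changes is the \emph{opposition relation}, which is restricted to pairs coming from $\oB^-_R\oB^+_R\subsetneq B^-_{\KK}B^+_{\KK}$. The relevant geometry is therefore not a subcomplex of the twin building but the chamber system $\Opp(\CCC)$ of $R$-opposite pairs, and this is not a building. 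The paper axiomatises the resulting structure as a ``twin chamber system'' (axioms (TCS1)--(TCS4)) and proves the new and nontrivial Theorem~\ref{thm:simplyconnectedtwincs}: if the two halves of a twin chamber system are simply connected, then so is $\Opp(\CCC)$. Nothing in your proposal indicates how you would prove simple connectedness of your $\Delta(R)$, and the classical twin-building argument (Ronan--Tits) does not apply because the restricted opposition relation destroys the building structure on $\Opp(\CCC)$. Verifying (TCS1)--(TCS4) for $\CCC(G^{\min}_R)$ and identifying the $J$-residue stabilisers with $T_R\oG_{JR}$ also require real work (Lemmas~\ref{lemma:GicapB-B+}, \ref{lemma:GJRcappiRinvB} and Proposition~\ref{prop:intersection_parabolic_opposite_signs}, which in turn use the Bruhat-like decomposition over Bezout domains and the description of $\ker\pi_R$ over local rings).

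Finally, your description of the role of (co) is off. Local rings are already universal for every spherical root system of rank at most $2$ by Stein's theorem (see Remark~\ref{remark:GE2rings}(3) and Theorem~\ref{theorem:spherical_injectivity}), so (co) is not needed for the rank-$2$ blocks to be correctly presented. The actual role of (co) is to guarantee, via \cite[Lemma~11.1]{All2}, that $U^{\pm}_R$ is generated by the simple root groups (Proposition~\ref{prop:simplegenerationU+}); this is what makes the rank-$1$ Levi decompositions, the control of $\ker\pi_R$, and ultimately the axioms (TCS1), (TCS3) and (TCS4) go through. Equivalently, on the residue field it rules out the small rank-$2$ residues $B_2(\FF_2)$, $G_2(\FF_2)$, $G_2(\FF_3)$ obstructing connectedness of opposition sets.
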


The proof of Theorem~\ref{thmintro:main} can be found in Section~\ref{section:TCSofGminR}.

\begin{remarkintro}
Theorem~\ref{thmintro:main} for $R$ a field is the main result of \cite{AbrM97}. More precisely, Abramenko--Mühlherr describe in \cite{AbrM97} two different approaches to show that a group acting chamber-transitively on a thick $2$-spherical twin building $\CCC$ (with an additional condition amounting to the above condition (co)) can be presented as a Curtis--Tits amalgam, as in Theorem~\ref{thmintro:main}. The second approach, detailed in an unpublished preprint \cite{Muh99} by the second author, is of geometric nature:  
it consists in proving that a certain chamber system $\Opp(\CCC)$ associated to $\CCC$ is simply connected (see \S\ref{section:TCSdef} for precise definitions). 

To prove Theorem~\ref{thmintro:main}, we introduce the notion of (simply connected) \emph{twin chamber system} $\CCC$ (see \S\ref{section:TCSdef}), which englobes the twin buildings mentioned above, but also similar objects constructed from $2$-spherical Kac--Moody groups over local rings (see \S\ref{section:TCSofGminR}). We then show, as in \cite{Muh99}, that groups acting chamber-transitively on $\Opp(\CCC)$ admit a presentation as a Curtis--Tits amalgam (see Corollary~\ref{corollary:sctwincs_implies_CTA}). We expect Corollary~\ref{corollary:sctwincs_implies_CTA} to have further applications beyond Theorem~\ref{thmintro:main}.
\end{remarkintro}

\begin{remarkintro}\label{remarkintro:12}
$ $
\begin{enumerate}
\item 
To prove Theorem~\ref{thmintro:main}, we show that the map $\mathrm{CT}_A(R)\to \G_A^{\min}(R)$ is an isomorphism, thereby establishing the desired isomorphism $\G_A(R)\cong \G_A^{\min}(R)$. Of course, this also shows that $\mathrm{CT}_A(R)\cong\G_A(R)$, which recovers a result of Allcock (see \cite[Theorem~1.1(iv) and Corollary~1.3]{All2}) for valuation rings. Note that Allcock's result that $\mathrm{CT}_A(R)\cong\G_A(R)$ when $A$ is $2$-spherical holds for arbitrary rings $R$ satisfying (co); in particular, for such rings, the injectivity of $\G_A(R)\to \G_A^{\min}(R)$ is equivalent to the presentation of $\G_A^{\min}(R)$ as the Curtis--Tits amalgam $\mathrm{CT}_A(R)$.
\item
One can define Kac--Moody groups $\G_{\DDD}(R)$ associated to more general data $\DDD$ than the GCM $A$, called \emph{Kac--Moody root data} --- this corresponds in the classical case to considering the different isogeny types of a semisimple algebraic group. For the purpose of this introduction, we formulated Theorem~\ref{thmintro:main} for the \emph{simply connected} Kac--Moody root datum $\DDD=\DDD_A^{\sico}$ as in \cite[Example~7.11]{KMGbook} (that is, $\G_A=\G_{\DDD_A^{\sico}}$), but we prove Theorem~\ref{thmintro:main} in the setting of arbitrary Kac--Moody root data (see Theorem~\ref{thm:main_amalgamKM}). 
\end{enumerate}
\end{remarkintro}

Note that Theorem~\ref{thmintro:main} provides new universality results even in the classical setting. Indeed, let $\bar{A}$ be a Cartan matrix with irreducible root system $\bar{\Phi}$, and let $A$ be the extended matrix of $\bar{A}$ (see \cite[\S5.3]{KMGbook}): this is a GCM, of so-called untwisted affine type, which is $2$-spherical if $\bar{\Phi}$ is not of type $A_1$. Moreover, there is a Kac--Moody root datum $\DDD$ associated to $A$ for which there is a natural morphism $\varphi_R\co\G_{\DDD}(R)\to \mathfrak{CD}_{\bar{\Phi}}(R[t,t\inv])$ that is an isomorphism whenever $R$ is a field (see \cite[\S7.6]{KMGbook}). Theorem~\ref{thmintro:main} (or rather, Theorem~\ref{thm:main_amalgamKM}) then implies that for any valuation ring $R$ satisfying (co), the Laurent polynomial ring $R[t,t\inv]$ is universal for $\bar{\Phi}$ (see Remark~\ref{remark:proofPropB}), thus generalising Morita's result \cite{Mor82} for such root systems.

\begin{corintro}\label{corintro:Laurentpolynuniversal}
Let $\Phi$ be an irreducible reduced root system, and suppose that $\Phi$ is not of type $A_1$. Let $R$ be a valuation ring satisfying (co). Then $R[t,t\inv]$ is universal for $\Phi$.
\end{corintro}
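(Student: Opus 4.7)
The plan is to deduce the corollary directly from the general root datum version of the main theorem (Theorem~\ref{thm:main_amalgamKM}), via the loop-group presentation of untwisted affine Kac--Moody groups recalled in the paragraph preceding the corollary. Fix an irreducible reduced root system $\Phi\neq A_1$, with Cartan matrix $\bar{A}$, and let $A$ be the extended Cartan matrix of $\bar{A}$. An inspection of the untwisted affine Dynkin diagrams shows that every entry satisfies $a_{ij}a_{ji}\leq 3$ for $i\neq j$ (the value $4$ would only occur for $\widetilde{A_1}$, excluded by hypothesis), so $A$ is $2$-spherical. Let $\DDD$ be the Kac--Moody root datum attached to $A$ in \cite[\S7.6]{KMGbook}, equipped with its natural morphism $\varphi_S\co \G_{\DDD}(S)\to \CDem_{\Phi}(S[t,t\inv])$ that is an isomorphism whenever $S$ is a field.

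Now let $R$ be a valuation ring satisfying (co), with field of fractions $\KK$. Theorem~\ref{thm:main_amalgamKM} applied to $\DDD$ yields that the canonical map $\G_{\DDD}(R)\to \G_{\DDD}(\KK)$ is injective. Consider the commutative square
\begin{equation*}
\begin{array}{ccc}
\G_{\DDD}(R) & \xrightarrow{\ \varphi_R\ } & \CDem_{\Phi}(R[t,t\inv])\\
\downarrow & & \downarrow\\
\G_{\DDD}(\KK) & \xrightarrow[\cong]{\ \varphi_{\KK}\ } & \CDem_{\Phi}(\KK[t,t\inv])
\end{array}
\end{equation*}
whose vertical arrows come from the inclusion $R\hookrightarrow\KK$. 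The right vertical arrow is injective since $\CDem_{\Phi}$ is an affine group scheme and $R[t,t\inv]\hookrightarrow\KK[t,t\inv]$. Because the composite $\G_{\DDD}(R)\hookrightarrow\G_{\DDD}(\KK)\xrightarrow{\cong}\CDem_{\Phi}(\KK[t,t\inv])$ is injective and factors as the right-hand column composed with $\varphi_R$, the map $\varphi_R$ itself must be injective.

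Finally, under the identification that sends the real affine root $\beta+n\delta$ (with parameter $r\in R$) to $x_\beta(rt^n)$, the Tits presentation of $\G_{\DDD}(R)$ becomes Steinberg's presentation of $G_{\Phi}(R[t,t\inv])$, and $\varphi_R$ becomes the canonical map $G_{\Phi}(R[t,t\inv])\to \CDem_{\Phi}(R[t,t\inv])$; this matching of presentations is what underlies the construction of $\varphi$ in \cite[\S7.6]{KMGbook}, and holds verbatim over any base ring. Hence the injectivity of $\varphi_R$ established above is exactly the universality of $R[t,t\inv]$ for $\Phi$. The only non-trivial ingredient in the proof of the corollary is therefore Theorem~\ref{thm:main_amalgamKM} itself; the deduction from it is a formal diagram chase together with the standard comparison of the Tits and Steinberg presentations, so the genuine difficulty is entirely located upstream, in the proof of Theorem~\ref{thm:main_amalgamKM}.
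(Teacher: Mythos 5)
Your reduction to Theorem~\ref{thm:main_amalgamKM}, the verification that the extended GCM $A$ is $2$-spherical, and the diagram chase showing that $\varphi_R\co\G_{\DDD}(R)\to\CDem_{\bar\Phi}(R[t,t\inv])$ is injective are all fine. The gap is in the final step, where you assert that ``the Tits presentation of $\G_{\DDD}(R)$ becomes Steinberg's presentation of $G_{\Phi}(R[t,t\inv])$ \ldots and holds verbatim over any base ring.'' This identification $\G_{\DDD}(R)\cong\G_{\bar A}(R[t,t\inv])$ is exactly what needs to be proved, and it is \emph{not} established by \cite[\S7.6]{KMGbook}: that reference gives a natural morphism $\G_{\DDD}(R)\to\CDem_{\bar\Phi}(R[t,t\inv])$ which is an isomorphism over \emph{fields} (via Morita), but it does not identify $\G_{\DDD}(R)$ with the Steinberg-presented group $\G_{\bar A}(R[t,t\inv])$ over a general ring. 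To match the two presentations you would have to show that the full list of Tits relations (\ref{eqn:commut_rel_remy2})--(\ref{eqn:relation_sus}), which run over \emph{all} affine real roots and all prenilpotent pairs in $\Phi(A)$, is equivalent to the Steinberg relations $(\bar{\mathrm U}),(\bar{\mathrm C}),(\bar{\mathrm T})$ over $R[t,t\inv]$ (for instance, that the biadditive Chevalley commutator identity for arbitrary Laurent polynomials $P,Q$ can be recovered from its monomial instances, through a chain of Hall--Witt manipulations, over an arbitrary commutative ring); this is a genuine computation, not a formal relabelling of generators.

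The paper deliberately sidesteps this. Instead of the two-sided identification $\G_{\DDD}(R)\cong\G_{\bar A}(R[t,t\inv])$, Remark~\ref{remark:proofPropB} only produces a \emph{surjection} $\mathrm{CT}_{\DDD}(R)\twoheadrightarrow\G_{\bar A}(R[t,t\inv])$. This requires checking that, for each $J\subseteq I$ with $|J|\leq 2$, the defining relations of $T_RG_{JR}$ (a finite, rank-$2$ list) hold in $\G_{\bar A}(R[t,t\inv])$ via $(\bar{\mathrm U}),(\bar{\mathrm C}),(\bar{\mathrm T})$, which is tractable. One then composes with $\bar\psi_R$ to get $\psi_R$, whose injectivity is the content of Theorem~\ref{thm:main_amalgamKM}, and concludes that $\bar\psi_R$ itself is injective (a surjection followed by an injection is injective precisely when the middle map is). If you want to rescue your argument, replace the claimed isomorphism $\G_{\DDD}(R)\cong\G_{\bar A}(R[t,t\inv])$ by this surjection from the amalgam; otherwise you must actually carry out the comparison of presentations in full, which is a nontrivial task not done anywhere in the paper.
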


Finally, note that, as soon as $\SL_2(R)$ is elementary generated (such rings $R$ are called \emph{$\GE$-rings} in \cite{Cohn66}, and include local rings), the natural morphism $\varphi_R\co\G_A(R)\to \G_A^{\min}(R)$ is surjective. On the other hand, even if $R$ is a $\GE$-ring and a domain, with field of fractions $\KK$, the image under $\varphi_R$ of the subgroup $U^+_R$ of $\G_A(R)$ generated by the positive real root groups (see \S\ref{subsection:TCTFGDDD} for precise definitions) is in general properly contained in $U^{\min+}_R:=U^{+}_{\KK}\cap \G_A^{\min}(R)$: this happens for instance as soon as $A$ is not $2$-spherical, as observed by Tits (see \cite[Remark~3.10(d)]{Tits87}). As a byproduct of our methods, we show that $U^+_R$ and $U^{\min+}_R$ nevertheless coincide when $A$ is $2$-spherical and $R$ is a local domain satisfying (co) --- see Theorem~\ref{thm:UbarRisGminRcapUK}(3). 

\begin{propintro}
Let $A$ be a $2$-spherical GCM and $R$ be a local domain satisfying (co), with field of fractions $\KK$. Consider the natural morphism $\varphi_R\co\G_A(R)\to \G_A^{\min}(R)\subseteq\G_A(\KK)$. Then $$\varphi_R(U^{+}_R)=U^{+}_{\KK}\cap \G_A^{\min}(R).$$
\end{propintro}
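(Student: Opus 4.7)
The inclusion $\varphi_R(U^+_R)\subseteq U^+_\KK\cap\G_A^{\min}(R)$ is immediate from the definitions, so the content of the proposition is the reverse inclusion. My plan is to exploit the unique factorisation of elements of $U^+_\KK$ as ordered products $u=\prod_{\alpha}x_\alpha(r_\alpha)$, where $\alpha$ ranges over the positive real roots in a fixed order by increasing height and $r_\alpha\in\KK$, and to prove by induction on height that if $u\in\G_A^{\min}(R)$ then every $r_\alpha$ lies in $R$.

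For the base case, I would invoke the classical fact that for each two-element subset $J\subseteq I$, the Chevalley group $G_{JR}$ is of rank-$2$ type ($A_1\times A_1$, $A_2$, $B_2$ or $G_2$) over the local ring $R$, and a Bruhat/Gauss-type decomposition yields $U^+_{J\KK}\cap G_{JR}=U^+_{JR}$; here the condition (co) is precisely what is needed to exclude the pathological $\mathrm{Sp}_4(\FF_2)$, $\CDem_{G_2}(\FF_2)$ and $\CDem_{G_2}(\FF_3)$ situations. For the inductive step, once the coefficients of height strictly less than $h$ are known to lie in $R$, one multiplies $u$ by the inverse of the corresponding element of $U^+_R$ to obtain an element $u'\in U^+_\KK\cap\G_A^{\min}(R)$ whose expansion is supported on roots of height $\geq h$; then, for a positive real root $\alpha_0$ of height $h$, one isolates the coefficient $r_{\alpha_0}$ via the canonical projection of $u'$ modulo the subgroup generated by root groups of higher height, and concludes $r_{\alpha_0}\in R$ by applying the rank-$2$ base case to a suitable rank-$2$ subsystem containing $\alpha_0$.

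The principal obstacle in making this rigorous is that the height-projection $U^+_\KK\to U_{\alpha_0}(\KK)$ is canonical within $U^+_\KK$ but has no intrinsic meaning inside $\G_A^{\min}(R)$; one must realise this projection geometrically in order to exploit the hypothesis $u'\in\G_A^{\min}(R)$ and feed it into the rank-$2$ base case. The natural way to do this is via the action of $\G_A^{\min}(R)$ on the twin chamber system $\Opp(\CCC)$ constructed in the paper: the element $u'$ fixes the fundamental positive chamber $c_0^+$, and by tracking its action on suitable rank-$2$ residues ``seeing'' $\alpha_0$ (and invoking the simple connectivity machinery developed for the proof of Theorem~\ref{thmintro:main}) one realises the truncation of $u'$ as an element of a rank-$2$ subgroup $G_{JR}$ to which the base case applies. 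This geometric input is precisely the ``byproduct of our methods'' referred to in the proposition's preamble, and the $2$-spherical hypothesis on $A$ is essential because it guarantees that every positive real root appearing in the height induction is accessible via some finite rank-$2$ subsystem.
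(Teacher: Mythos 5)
Your opening observation is correct (only the inclusion $U^+_\KK\cap\G_A^{\min}(R)\subseteq\varphi_R(U^+_R)$ is at stake), but the proposed fix is circular. The construction of the twin chamber system $\CCC(G^{\min}_R)$ and, in particular, Lemma~\ref{lemma:CCCsimplyconnectedKM} on its simple connectivity explicitly invoke Theorem~\ref{thm:UbarRisGminRcapUK}, which is exactly the statement you are trying to prove. The ``byproduct of our methods'' mentioned in the preamble refers to the purely algebraic machinery of Section~\ref{section:opposition_local_Bezout_domains}, not to the geometric machinery of Sections~\ref{section:TCSdef}--\ref{section:TCSofGminR}: the latter is built on top of the former, not the other way around. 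So the appeal to simple connectivity of $\Opp(\CCC)$ cannot be the missing ingredient.

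The height-induction scheme is also not how the paper proceeds, and you have accurately located the point at which it breaks: there is no way, within $\G_A^{\min}(R)$ alone, to read off the height-$h$ coefficient of an element of $U^+_\KK$. In addition, a normal form by increasing height over all positive real roots lives naturally in the pro-unipotent maximal group $\U^{\ma+}_A$, not in the minimal $U^+_\KK$, so even setting up your induction carefully requires the scheme-theoretic picture. The paper's proof (Theorem~\ref{thm:UbarRisGminRcapUK}) instead establishes the stronger statement $\oP^{\pm}_{iR}=G^{\min}_R\cap P^{\pm}_{i\KK}$, from which the unipotent statement follows formally via the rank-one Levi decomposition over $\KK$ and $T_\KK\cap U^+_\KK=\{1\}$. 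The key inputs are: the affine-scheme comparison $\G^{\pma}_{\DDD}(R)\cap\PP^{\ma+}_i(\KK)=\PP^{\ma+}_i(R)$ coming from closed immersions into $\B(w)$ (Lemma~\ref{lemma:RKGpma}); reduction modulo the maximal ideal to the residue field $k$, where the parabolic statement is already known; and the description $\ker\pi_R\subseteq U^-_L B^+_R$ of the kernel of that reduction (Proposition~\ref{prop:kerpiR}), proved via the Bruhat-like decomposition of Proposition~\ref{prop:Bezout_Steinberg}. No height induction and no chamber-system geometry appear.
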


The paper is structured as follows. After some preliminaries on Kac--Moody groups in Section~\ref{section:preliminaries}, we establish properties of these groups in the three next sections, under the different assumptions on the ring $R$ required for Theorem~\ref{thmintro:main}:  for Bezout domains in Section~\ref{section:BLDOGminOBD}, for rings satisfying the condition (co) in Section~\ref{section:GBSRGIT2SC}, and for local rings in Section~\ref{section:opposition_local_Bezout_domains}. In Section~\ref{section:TCSdef}, after briefly recalling some terminology on chamber systems, we introduce the notion of twin chamber systems, and state our main result about them (Theorem~\ref{thm:simplyconnectedtwincs}). This result, which is of geometric nature and is independent of the Kac--Moody setting, is proved in Section~\ref{section:proofsimpleconnected}. Finally, in Section~\ref{section:TCSofGminR}, we show that $2$-spherical Kac--Moody groups over valuation rings satisfying (co) yield simply connected twin chamber systems.

\subsection*{Acknowledgement} We would like to thank Pierre-Emmanuel Caprace for useful comments on an earlier version of the paper.


\section{Preliminaries}\label{section:preliminaries}

We start by introducing the notations and terminology that will be adopted throughout the paper.

\subsection{Rings}
By a \textbf{ring} we always mean a commutative, unital ring. Given a ring $R$, we denote by $R^{\times}$ the multiplicative group of its units. We denote by $\Zalg$ the category of rings, and by $\Gr$ the category of groups.

\subsection{About \texorpdfstring{$\SL_2$}{SL2}}\label{subsection:aboutSL2}
Let $R$ be a ring. We write $B^{+}_2(R)$ (resp. $B^-_2(R)$) for the subgroup of upper (resp. lower) triangular matrices in $\SL_2(R)$, and $U^{\pm}_2(R)$ for the unipotent matrices of $B^{\pm}_2(R)$. We also let $\SSL_2(R):=\langle U^+_2(R),U^-_2(R)\rangle$ denote the elementary subgroup of $\SL_2(R)$. 

Following \cite{Cohn66}, we call $R$ a {\bf $\mathrm{GE}_2$-ring} if $\SL_2(R)=E_2(R)$. For instance, Euclidean rings and rings of stable rank $1$ (in particular, local rings) are $\mathrm{GE}_2$-rings.

Recall that a {\bf Bezout ring} is a ring in which the sum of two principal ideals is again a principal ideal.

\begin{lemma}\label{lemma:sl2kK_Steinberg}
Assume that $R$ is a Bezout domain, with field of fractions $\KK$. Then $$\SL_2(\KK)=\SL_2(R) B_2^+(\KK).$$
Moreover,
$$U^+_2(\KK)\begin{psmallmatrix}0 &1\\ -1&0\end{psmallmatrix}B^+_2(\KK)=YB^+_2(\KK),$$
where $Y$ is a set of coset representatives for $(\SL_2(R)-B^+_2(R))/B^+_2(R)$.
\end{lemma}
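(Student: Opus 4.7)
The plan is to prove both assertions by a standard Iwasawa-type column reduction. For the decomposition $\SL_2(\KK)=\SL_2(R)B^+_2(\KK)$, I would start from an arbitrary $g=\begin{psmallmatrix}a & b\\ c & d\end{psmallmatrix}\in\SL_2(\KK)$ and aim to produce $h\in\SL_2(R)$ whose first column is $\KK^{\times}$-proportional to $(a,c)^{T}$: then $h\inv g$ has a zero in the $(2,1)$-entry and therefore lies in $B^+_2(\KK)$. The case $c=0$ is settled by taking $h=\Id$. Otherwise, I would pick a common denominator $s\in R-\{0\}$ so that $a_0:=sa$ and $c_0:=sc$ lie in $R$; the Bezout hypothesis makes $(a_0,c_0)\subseteq R$ principal, generated by some $t$, which is nonzero because $c_0\neq 0$. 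Writing $a_0=pt$ and $c_0=rt$ with $p,r\in R$, the identity $(p,r)=R$ furnishes $u,v\in R$ with $up+vr=1$, and I would set $h:=\begin{psmallmatrix}p & -v\\ r & u\end{psmallmatrix}\in\SL_2(R)$. A direct computation then checks that the $(2,1)$-entry of $h\inv g$ vanishes, so $h\inv g\in B^+_2(\KK)$.

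For the second identity, I would invoke the Bruhat decomposition $\SL_2(\KK)=B^+_2(\KK)\sqcup U^+_2(\KK)\,w\,B^+_2(\KK)$ (with $w:=\begin{psmallmatrix}0 & 1\\ -1 & 0\end{psmallmatrix}$) for $\SL_2$ over the field $\KK$, reducing the statement to the equality $YB^+_2(\KK)=\SL_2(\KK)-B^+_2(\KK)$. Given $g$ in the big cell, the first part provides $h\in\SL_2(R)$ and $b\in B^+_2(\KK)$ with $g=hb$; since $g\notin B^+_2(\KK)$, we must have $h\notin B^+_2(R)$, so $h=yb_0$ for some $y\in Y$ and $b_0\in B^+_2(R)$, whence $g\in YB^+_2(\KK)$. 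For the reverse inclusion, any $y\in\SL_2(R)-B^+_2(R)$ satisfies $y\notin B^+_2(\KK)$, because an upper-triangular matrix in $\SL_2(\KK)$ with entries in $R$ automatically lies in $B^+_2(R)$; hence $YB^+_2(\KK)$ sits inside the big cell.

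The only mildly delicate ingredient is the Bezout step above, and there the key point is merely to ensure $t\neq 0$ (which is automatic once $c\neq 0$) so that the cancellations $p=a_0/t$ and $r=c_0/t$ take place inside $R$ rather than only in $\KK$. The rest of both statements is bookkeeping around the standard $\SL_2$ Bruhat decomposition.
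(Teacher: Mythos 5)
Your proof is correct and follows essentially the same strategy as the paper: use the Bezout hypothesis (after clearing denominators) to produce a matrix in $\SL_2(R)$ that left-multiplies $g$ into $B^+_2(\KK)$, then deduce the second identity from the Bruhat decomposition of $\SL_2(\KK)$ together with the observation that $\SL_2(R)\cap B^+_2(\KK)=B^+_2(R)$. The paper phrases the first step slightly more directly (choosing coprime $c,d\in R$ with $cp+dr=0$ and completing to a matrix in $\SL_2(R)$), but this is the same idea as your primitive-first-column construction.
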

\begin{proof}
Consider a matrix $M=\begin{psmallmatrix}p & q\\ r & s \end{psmallmatrix}$ of $\SL_2(\KK)$. Choose $c,d\in R$ relatively prime such that $cp+dr=0$, and let $a,b\in R$ be such that $ad-bc=1$. Then multiplying $M$ on the left by the matrix $\begin{psmallmatrix}a & b\\ c & d \end{psmallmatrix}$ of $\SL_2(R)$ yields a matrix in $\mathrm{B}_2(\KK)$. This proves the first claim.

Since $Y\cap B_2^+(\KK)=Y\cap (\SL_2(R)\cap B_2^+(\KK))=Y\cap B_2^+(R)=\varnothing$ and $\SL_2(R)=\coprod_{y\in Y\cup\{1\}}yB^+_2(R)$, it follows from the first statement that $\SL_2(\KK)=B_2^+(\KK)\sqcup YB_2^+(\KK)$. The second claim then follows from the Bruhat decomposition $\SL_2(\KK)=B_2^+(\KK)\sqcup U_2^+(\KK)\begin{psmallmatrix}0 &1\\ -1&0\end{psmallmatrix}B^+_2(\KK)$.
\end{proof}

We also recall that over local rings $R$, the group $\SL_2(R)=E_2(R)$ admits the following presentation.
\begin{lemma}\label{lemma:presentation_SL2_localring}
Let $R$ be a local ring. Then $\SL_2(R)$ admits a presentation with generators $\{x_+(r), x_-(r) \ | \ r\in R\}$ and the following relations, for all $a,b\in R$ and $r,s\in R^{\times}$:
\begin{enumerate}
\item
$x_{\pm}(a)x_{\pm}(b)=x_{\pm}(a+b)$,
\item
$\widetilde{s}(r)x_{\pm}(a)\widetilde{s}(r)\inv=x_{\mp}(ar^{\mp 2})$, where $\widetilde{s}(r):=x_+(r)x_-(r\inv)x_+(r)$,
\item
$r^h\cdot s^h=(rs)^h$, where $u^h:=\widetilde{s}(1)\inv \widetilde{s}(u)$ for all $u\in R^{\times}$.
\end{enumerate}
The isomorphism from this presentation to $\SL_2(R)$ is given by $x_+(r)\mapsto \begin{psmallmatrix}1&r\\ 0&1\end{psmallmatrix}$, $x_-(r)\mapsto \begin{psmallmatrix}1&0\\ -r&1\end{psmallmatrix}$.
\end{lemma}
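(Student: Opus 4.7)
The plan is threefold: (i) verify that relations (1)--(3) hold in $\SL_2(R)$ under the proposed assignment; (ii) deduce a surjection $\pi\co\widetilde{G}\to \SL_2(R)$ from the abstract group $\widetilde{G}$ defined by the presentation; and (iii) establish injectivity of $\pi$ by a Bruhat-style normal form argument.

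Step (i) is a routine matrix check: one computes $\widetilde{s}(r)=\begin{psmallmatrix}0 & r\\ -r\inv & 0\end{psmallmatrix}$ and $r^h=\begin{psmallmatrix}r & 0\\ 0 & r\inv\end{psmallmatrix}$ for $r\in R^{\times}$, after which (1)--(3) reduce to elementary identities. For (ii), recall from \S\ref{subsection:aboutSL2} that a local ring is a $\GE_2$-ring, so that $\SL_2(R)=\SSL_2(R)=\langle U_2^+(R), U_2^-(R)\rangle$ and $\pi$ is surjective.

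For (iii), set $\widetilde{U}^{\pm}:=\langle x_{\pm}(r)\mid r\in R\rangle$, $\widetilde{H}:=\langle r^h\mid r\in R^{\times}\rangle$ and $\widetilde{B}^+:=\widetilde{U}^+\widetilde{H}$ inside $\widetilde{G}$. Relations (1) and (3) give $\widetilde{U}^{\pm}\cong (R,+)$ and an epimorphism $R^{\times}\twoheadrightarrow \widetilde{H}$; relation (2) shows that $\widetilde{H}$ normalises $\widetilde{U}^{\pm}$, so $\widetilde{B}^+$ is a subgroup. Using (2) systematically to rewrite $\widetilde{s}(r)$-conjugates of the generators, I would show that $\widetilde{B}^+\cup \widetilde{U}^+\widetilde{s}(1)\widetilde{B}^+$ is stable under left multiplication by every $x_{\pm}(r)$, hence equals $\widetilde{G}$. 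On the matrix side, locality of $R$ ensures that any matrix of $\SL_2(R)$ lies in $B_2^+(R)$ (if its $(1,1)$-entry is a unit) or in $U_2^+(R)\cdot w\cdot B_2^+(R)$ with $w=\widetilde{s}(1)$ (otherwise, as the $(2,1)$-entry is then forced to be a unit), and the two pieces are disjoint with unique decompositions in each. Matching the two normal forms through $\pi$, and noting that $\pi|_{\widetilde{U}^{\pm}}$ and $\pi|_{\widetilde{H}}$ are visibly injective thanks to (1) and (3), yields the result.

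The main obstacle is verifying the stability of the candidate decomposition of $\widetilde{G}$ under left multiplication by $x_-(r)$ when $r$ lies in the maximal ideal $\mathfrak{m}$: relation (2) in its original form is unavailable (it requires $r\in R^{\times}$), and one must instead use the special case $x_-(a)=\widetilde{s}(1)x_+(a)\widetilde{s}(1)\inv$ (which is (2) at $r=1$) and carefully rearrange the resulting $\widetilde{s}(1)$-conjugates into a single occurrence modulo $\widetilde{B}^+$. This is the step where locality of $R$ (in the guise of $R=R^{\times}\sqcup\mathfrak{m}$) is essential: it both drives the $\widetilde{B}^+$ versus $\widetilde{U}^+\widetilde{s}(1)\widetilde{B}^+$ dichotomy and ensures termination of the reduction algorithm.
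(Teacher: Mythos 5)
The paper's own proof is a one-line citation to Cohn's results (\cite{Cohn66}, \cite{Cohn68}) together with a remark that Cohn's presentation differs slightly but is straightforwardly equivalent, so your attempt to give a self-contained Bruhat-style normal form argument is a genuinely different route. Unfortunately, there is a concrete error in the decomposition that your Step~(iii) is built on, and it breaks the argument.

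You claim that locality of $R$ forces every $g\in\SL_2(R)$ to lie in $B_2^+(R)$ (when the $(1,1)$-entry is a unit) or in $U_2^+(R)\,w\,B_2^+(R)$ (when the $(2,1)$-entry is a unit). This dichotomy is false as soon as $R$ is not a field. Take $m\in\mathfrak m\setminus\{0\}$ and $g=\begin{psmallmatrix}1&0\\ m&1\end{psmallmatrix}$: its $(1,1)$-entry is a unit, yet $g\notin B_2^+(R)$ because the $(2,1)$-entry is nonzero; and $g\notin U_2^+(R)\,w\,B_2^+(R)$ either, because every element of that set has a \emph{unit} in position $(2,1)$. So $B_2^+(R)\cup U_2^+(R)\,w\,B_2^+(R)\subsetneq\SL_2(R)$ over any local ring that is not a field, and consequently $\widetilde B^+\cup\widetilde U^+\widetilde s(1)\widetilde B^+$ cannot equal $\widetilde G$, nor can it be stable under left multiplication by $x_-(m)$. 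The normal-form argument therefore does not close. (Over a field, $c\in\mathfrak m$ does force $c=0$, which is what makes the familiar two-cell Bruhat picture work there; you seem to have carried that over unchanged.)

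The correct local-ring decomposition replaces $B_2^+(R)$ by $U_2^-(\mathfrak m)\,B_2^+(R)$, i.e.\
$$\SL_2(R)=U_2^-(\mathfrak m)\,B_2^+(R)\ \sqcup\ U_2^+(R)\,w\,B_2^+(R),$$
the two pieces being distinguished by whether the $(2,1)$-entry lies in $\mathfrak m$ or in $R^\times$. On the abstract side you would then have to work with $\widetilde U^-_{\mathfrak m}\widetilde B^+\cup\widetilde U^+\widetilde s(1)\widetilde B^+$ with $\widetilde U^-_{\mathfrak m}:=\{x_-(m)\mid m\in\mathfrak m\}$, and the stability under left multiplication by $x_\pm(r)$ becomes noticeably more delicate: for instance, $x_+(r)\,x_-(m)$ with $m\in\mathfrak m$ has no applicable commutation relation, so one has to route everything through $x_-(m)=\widetilde s(1)\,x_+(m)\,\widetilde s(1)\inv$ and collect the $\widetilde s(1)$'s. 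This is exactly the kind of bookkeeping that Cohn's original argument handles, and it is not obviously short; as written, the key step of your proof is simply absent.
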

\begin{proof}
This follows from \cite[Theorem~4.1]{Cohn66} and \cite[Corollary to Theorem~1]{Cohn68} (note that Cohn actually gives a slightly different presentation of $\SL_2(R)$, but it is straightforward to check that both presentations are equivalent, see e.g. \cite[Appendix~A]{Hut22}). 
\end{proof}


\subsection{Kac--Moody root systems}
Let $A=(a_{ij})_{i,j\in I}$ be a \textbf{generalised Cartan matrix} (GCM), that is, $A$ is an integral matrix indexed by some finite set $I$, satisfying $a_{ii}=2$, $a_{ij}\leq 0$ and $a_{ij}=0\Leftrightarrow a_{ji}=0$ for all $i,j\in I$ with $i\neq j$. The cardinality of $I$ is called the \textbf{rank} of $A$. 

Let $Q:=\bigoplus_{i\in I}\ZZ\alpha_i$ be the free abelian group on the basis $\Pi:=\{\alpha_i \ | \ i\in I\}$. The \textbf{Weyl group} of $A$ is the subgroup $\WW=\WW(A)$ of $\GL(Q)$ (the $\ZZ$-linear permutations of $Q$) generated by the \textbf{simple reflections} $s_i$ ($i\in I$) defined by
$$s_i\co Q\to Q: \alpha_j\mapsto \alpha_j-a_{ij}\alpha_i.$$
The pair $(\WW,S:=\{s_i \ | \ i\in I))$ is then a Coxeter system, with the order $m_{ij}$ of $s_is_j$ ($i\neq j$) satisfying $m_{ij}=2,3,4,6$ or $\infty$, depending on whether $a_{ij}a_{ji}=0,1,2,3$ or $\geq 4$ (see e.g. \cite[Proposition~4.22]{KMGbook}). We denote by $\ell=\ell_S$ the word metric on $\WW$ with respect to $S$. The matrix $A$ is called \textbf{spherical} if $\WW$ is finite (equivalently, $A$ is a Cartan matrix), and \textbf{$2$-spherical} if $m_{ij}<\infty$ for all $i,j\in I$ with $i\neq j$.

Set $\Phi=\Phi(A):=\WW.\Pi\subseteq Q$. Then $\Phi$ coincides with the set of \emph{real roots} of the Kac--Moody algebra of type $A$, see e.g. \cite[\S~3.5]{KMGbook} (alternatively, $\Phi$ can be $\WW$-equivariantly identified with the set of \emph{roots} or \emph{half-spaces} of the Coxeter complex of $(W,S)$, see \cite[Section~B.4]{KMGbook}). In particular, if $A$ is spherical of rank $2$, then $\Phi$ is a root system of type $A_1\times A_1$, $A_2$, $B_2$ or $G_2$. Setting $$\Phi_{\pm}:=\Phi\cap \pm Q_+\quad\textrm{where}\quad Q_+:=\bigoplus_{i\in I}\NN\alpha_i\subseteq Q,$$ 
we have $\Phi=\Phi_+\cup\Phi_-$. We call an element $\alpha\in \Phi_{+}$ a \textbf{positive root} and $\alpha\in\Phi_-$ a \textbf{negative root}; we then also write $\alpha>0$ or $\alpha<0$ accordingly. The \textbf{height} of $\alpha=\sum_{i\in I}n_i\alpha_i\in\Phi$ is the integer $\height(\alpha):=\sum_{i\in I}n_i$.  If $J\subseteq I$, we set $\Phi_{\pm}(J):=\Phi_{\pm}\cap\bigoplus_{i\in J}\ZZ\alpha_i$.

Two distinct roots $\alpha,\beta\in\Phi$ form a \textbf{prenilpotent pair} if there exist $w,v\in\WW$ such that $\{w\alpha,w\beta\}\subseteq\Phi_+$ and $\{v\alpha,v\beta\}\subseteq\Phi_-$. In that case, the (open) interval $$]\alpha,\beta[_{\NN}:=\{i\alpha+j\beta \ | \ i,j\in\NN_{\geq 1}\}\cap\Phi$$
is finite (see \cite[\S7.4.3]{KMGbook}). If $\Phi$ is spherical, then $\{\alpha,\beta\}$ is prenilpotent if and only if $\beta\neq\pm\alpha$.

\subsection{Kac--Moody root data and tori}\label{subsection:KMRDandtori}
Let $A=(a_{ij})_{i,j\in I}$ be a GCM. A  \textbf{Kac--Moody root datum} associated to $A$ is a quintuple $\DDD=(I,A,\Lambda,(c_i)_{i\in I},(h_i)_{i\in I})$, where $\Lambda$ is a free $\ZZ$-module whose $\ZZ$-dual we denote $\Lambda^{\vee}$, and where the elements $c_i\in\Lambda$ and $h_i\in\Lambda^{\vee}$ satisfy $\langle c_j,h_i\rangle=a_{ij}$ for all $i,j\in I$. For instance, the unique Kac--Moody root datum $\DDD$ such that $\Lambda^{\vee}=\bigoplus_{i\in I}\ZZ h_i$ is called the \textbf{simply connected} Kac--Moody root datum, and is denoted $\DDD_A^{\sico}$ (see \cite[\S7.3.1]{KMGbook}). 

To any Kac--Moody root datum $\DDD$, one can associate a group functor $\T_{\Lambda}\co\Zalg\to\Gr$, called the \textbf{split torus scheme}, defined by $\T_{\Lambda}(R):=\Lambda^{\vee}\otimes_{\ZZ}R^{\times}$ for each ring $R$. Alternatively, $\T_{\Lambda}(R)=\Hom_{\Zalg}(\ZZ[\Lambda],R)\approx\Hom_{\Gr}(\Lambda,R^{\times})$, where the isomorphism $\Lambda^{\vee}\otimes_{\ZZ}R^{\times}\stackrel{\sim}{\to} \Hom_{\Gr}(\Lambda,R^{\times})$ is given by the assignment
$$h\otimes r\mapsto \Big[r^h\co\Lambda\to R^{\times}: \lambda\mapsto r^{h}(\lambda):=r^{\langle \lambda,h\rangle}\Big].$$
For instance, if $\DDD=\DDD_A^{\sico}$, then $\T_{\Lambda}(R)=\langle r^{h_i} \ | \ r\in R^{\times}, \ i\in I\rangle\cong (R^{\times})^{|I|}$ (see \cite[\S7.3.3]{KMGbook}).

\subsection{The constructive Tits functor \texorpdfstring{$\G_{\DDD}$}{GD}}\label{subsection:TCTFGDDD}

Let $\DDD=(I,A,\Lambda,(c_i)_{i\in I},(h_i)_{i\in I})$ be a Kac--Moody root datum. For each $\gamma\in\Phi=\Phi(A)$, we consider a copy $\U_{\gamma}$ of the additive group functor $\Ga\co\Zalg\to\Gr$ (given by $\Ga(R)=(R,+)$), by specifying an isomorphism $$x_{\gamma}\co R\stackrel{\sim}{\to}\U_{\gamma}(R):a\mapsto x_{\gamma}(a)\quad\textrm{for each ring $R$.}$$
For $i\in I$, we also set for short $x_{\pm i}:=x_{\pm\alpha_i}$.

\begin{definition}\label{definition:Steinberg_functor}
The {\bf Steinberg functor} associated to $A$ is the group functor $\Stt_A\co\Zalg\to\Gr$ defined as follows: for any ring $R$, we let $\Stt_A(R)$ denote the quotient of the free product of all $\U_{\gamma}(R)$ for $\gamma\in\Phi$ by the relations
\begin{equation}\label{eqn:commut_rel_remy2}
[x_{\alpha}(a),x_{\beta}(b)]=\prod_{\gamma}x_{\gamma}(C^{\alpha\beta}_{ij}a^ib^j)\quad\textrm{for all prenilpotent pairs $\{\alpha,\beta\}$ and all $a,b\in R$,}  \tag{{\bf R0}}
\end{equation}
where $\gamma=i\alpha+j\beta$ runs through $]\alpha,\beta[_{\NN}$ and the integers $C^{\alpha\beta}_{ij}$ are as in \cite[Proposition~7.43]{KMGbook}.
\end{definition}

\begin{definition}\label{definition:constructiveTitsfunctor}
The {\bf constructive Tits functor of type $\DDD$} (see \cite[Definition~7.47]{KMGbook}) is the group functor $\G_{\DDD}\co \Zalg\to\Gr$ such that, for each ring $R$, the group $\G_{\DDD}(R)$ is the quotient of the free product $\Stt_A(R)*\T_{\Lambda}(R)$ by the following relations, where $i\in I$, $r\in R$, $t\in\T_{\Lambda}(R)$, and where we set $\widetilde{s}_i(r):=x_i(r)x_{-i}(r\inv)x_i(r)$ for $r\in R^{\times}$ and $\widetilde{s}_i:=\widetilde{s}_i(1)$:
\begin{align}\label{eqn:relation_txt}
&t\cdot x_i(r)\cdot t\inv = x_i(t(c_i)r), \tag{{\bf R1}}\\ \label{eqn:relation_sts}
&\widetilde{s}_i\cdot t\cdot \widetilde{s}_i\sinv = s_i(t), \tag{{\bf R2}}\\ \label{eqn:relation_rhi}
&\widetilde{s}_i(r\inv)=\widetilde{s}_i\cdot r^{h_i} \qquad\textrm{for $r\in R^{\times}$,} \tag{{\bf R3}}\\ \label{eqn:relation_sus}
&\widetilde{s}_i\cdot u\cdot \widetilde{s}_i\sinv = s_i^*(u) \quad\textrm{for $u\in \U_{\gamma}(R)$,}\quad \gamma\in\Phi, \tag{{\bf R4}}
\end{align}
where the elements $s_i(t)\in\T_{\Lambda}(R)$ from (\ref{eqn:relation_sts}) and $s_i^*(u)\in\U_{s_i\gamma}(R)$ from (\ref{eqn:relation_sus}) are as in \cite[Definition~7.46]{KMGbook}.
\end{definition}

We set for short $G_R:=\G_{\DDD}(R)$. We can identify the \textbf{root groups} $$U_{\alpha}:=U_{\alpha R}:=\U_{\alpha}(R)\quad\textrm{($\alpha\in\Phi$)}\quad\textrm{and}\quad T_R:=\T_{\Lambda}(R)$$ with their image in $G_R$. We set $$U^{\pm}_R:=\langle U_{\alpha R} \ | \ \alpha\in\Phi_{\pm}\rangle \subseteq G_R, \quad  B^{\pm}_R:=T_RU^{\pm}_R\quad\textrm{and}\quad N_R:=\langle \widetilde{s}_i, \ T_{R} \ | \ i\in I\rangle.$$ 
For $w\in\WW$ with reduced decomposition $w=s_{i_1}\dots s_{i_d}$ ($i_1,\dots,i_d\in I$), we write $$\widetilde{w}:=\widetilde{s}_{i_1}\dots\widetilde{s}_{i_d}\in N_R;$$ as the notation suggests, $\widetilde{w}$ only depends on $w$ (see \cite[Proposition~7.57]{KMGbook}). \\
For each $i\in I$, we also set 
$$U^{\pm}_{(i)R}:=U^{\pm}_R\cap \widetilde{s}_iU^{\pm}_R\widetilde{s}_i\sinv,\quad G_{iR}:=\langle U_{\alpha_iR}, U_{-\alpha_iR}\rangle\quad\textrm{and}\quad P_{iR}^{\pm}:=\langle G_{iR}, B^{\pm}_R\rangle.$$
More generally, if $J\subseteq I$, we set
$$U^{\pm}_{JR}:=\langle U_{\alpha R} \ | \ \alpha\in \Phi_{\pm}(J)\rangle, \quad G_{JR}:=\langle G_{iR} \ | \ i\in J\rangle\quad\textrm{and}\quad P_{JR}^{\pm}:=\langle G_{JR},B_R^{\pm}\rangle.$$
There is a \textbf{Cartan--Chevalley involution} $\omega_R\in\Aut(G_R)$ such that $$\omega_R(U_{\alpha})=U_{-\alpha}\quad\textrm{for all $\alpha\in \Phi$}, \quad \omega_R(T_R)=T_R\quad\textrm{and}\quad \omega_R(\widetilde{s}_i)=\widetilde{s}_i\quad\textrm{for all $i\in I$}.$$

\begin{remark}\label{remark:Tsnormalises}
Let $R$ be a ring and $i\in I$.
\begin{enumerate}
\item The relation (\ref{eqn:relation_rhi}) implies that $\widetilde{s}_i^2=(-1)^{h_i}\in T_R$.
\item
The relation (\ref{eqn:relation_sus}) implies that $\widetilde{w}U_{\alpha}\widetilde{w}\sinv=U_{w\alpha}$ for all $w\in \WW$ and $\alpha\in\Phi$.
\item 
$T_R$ normalises each $\U_{\alpha}(R)$ ($\alpha\in\Phi$) by (2), (\ref{eqn:relation_txt}) and  (\ref{eqn:relation_sts}), and hence also $G_{iR}$ and $U^{\pm}_R$.
\item
Together with (\ref{eqn:relation_sts}), (1) and (3) imply that $U^+_{(i)R}$ is normalised by $T_R$ and $\widetilde{s}_i$.
\end{enumerate}
\end{remark}

\begin{remark}\label{remark:Bruhat_and_Levidec_over_fields}
Let $\KK$ be a field. The group $G_{\KK}$ has the following properties (see e.g. \cite[\S7.4.6, \S B.3 and B.4]{KMGbook}).
\begin{enumerate}
\item
The assignment $\widetilde{s}_i\mapsto s_i$ defines a surjective morphism $N_{\KK}\to \WW$ with kernel $T_{\KK}$. The pairs $(B^+_{\KK},N_{\KK})$ and $(B^-_{\KK},N_{\KK})$ form a \emph{twin BN-pair} for $G_{\KK}$. In particular, $G_{\KK}$ admits \textbf{Bruhat decompositions}
$$G_{\KK}=\coprod_{w\in\WW}B^{\pm}_{\KK}\widetilde{w}B^{\pm}_{\KK}.$$
\item
The group $G_{\KK}$ also admits (refined) \textbf{Birkhoff decompositions}
$$G_{\KK}=\coprod_{w\in\WW}U^{\mp}_{\KK}\widetilde{w}T_{\KK}U^{\pm}_{\KK}.$$
Moreover, if $u_-nu_+=1$ for some $u_{\pm}\in U^{\pm}_{\KK}$ and $n\in N_{\KK}$, then $u_-=u_+=n=1$. In particular, 
\begin{equation}\label{eqn:ABrownProp8.76}
B^{+}_{\KK}\cap U^{-}_{\KK}=B^{-}_{\KK}\cap U^{+}_{\KK}=\{1\}.
\end{equation}
\item
If $i\in I$, one has semidirect decompositions $$P^{\pm}_{i\KK}=T_{\KK}G_{i\KK}\ltimes U^{\pm}_{(i)\KK}\quad\textrm{and}\quad U^{\pm}_{\KK}=U_{\pm\alpha_i\KK}\ltimes U^{\pm}_{(i)\KK}.$$
In particular,
\begin{equation}\label{eqn:umcapPip}
U^{\mp}_{\KK}\cap P^{\pm}_{i\KK}=U_{\mp\alpha_i\KK},
\end{equation}
as $P^{\pm}_{i\KK}=U_{\mp\alpha_i\KK}B^{\pm}_{\KK}\cup\widetilde{s}_iB^{\pm}_{\KK}$.
\end{enumerate}
\end{remark}


\subsection{The Tits functor \texorpdfstring{$\G_{\DDD}^{\min}$}{GDmin}}\label{subsection:GminTitsfunctor}
Let $\DDD=(I,A,\Lambda,(c_i)_{i\in I},(h_i)_{i\in I})$ be a Kac--Moody root datum. In \cite{Tits87}, Tits asserts the existence of a group functor over the category of rings, called a \textbf{Tits functor}, satisfying a small number of natural axioms (the axioms (KMG1)--(KMG5) from \cite{Tits87}), and then shows that, up to an additional nondegeneracy condition, the restriction of such a functor to the category of fields is uniquely determined. Such a Tits functor $\G_{\DDD}^{\min}\co\Zalg\to\Gr$ has been explicitly constructed in \cite[\S8.8]{KMGbook} (see \cite[Proposition~8.128]{KMGbook}). It has the following properties:
\begin{enumerate}
\item[(Gmin1)]
$\G_{\DDD}^{\min}$ comes equipped with group functor morphisms $\varphi_i\co\SL_2\to\G_{\DDD}^{\min}$ ($i\in I$) and $\eta\co\T_{\Lambda}\to \G_{\DDD}^{\min}$ such that for each ring $R$, the group morphism $\eta_R\co\T_{\Lambda}(R)\to \G_{\DDD}^{\min}(R)$ is injective. We then identify $T_R=\T_{\Lambda}(R)$ with a subgroup of $G^{\min}_R:=\G_{\DDD}^{\min}(R)$. 
\item[(Gmin2)]
There is a (unique) group functor morphism $\varphi\co \G_{\DDD}\to \G_{\DDD}^{\min}$ such that for each ring $R$, the restriction of $\varphi_R\co \G_{\DDD}(R)\to \G_{\DDD}^{\min}(R)$ to $T_R$ is the identity, and for each $i\in I$,
$$\varphi_{iR}\begin{psmallmatrix}1&r\\ 0&1\end{psmallmatrix}=\varphi_R(x_i(r))\quad\textrm{and}\quad \varphi_{iR}\begin{psmallmatrix}1&0\\ -r&1\end{psmallmatrix}=\varphi_R(x_{-i}(r)) \quad\textrm{for all $r\in R$.}$$ 
The morphism $\varphi_R$ is injective on each $U_{\gamma R}$ ($\gamma\in\Phi$), and we keep the notations $U_{\gamma R}$, $x_{\gamma}(r)$, $x_{\pm i}(r)$ and $\widetilde{s}_i$ ($i\in I$) for the corresponding objects in $G^{\min}_R$. We also set $\oG_{iR}:=\varphi_R(G_{iR})$. Note that $$\varphi_{iR}\begin{psmallmatrix}0&1\\ -1&0\end{psmallmatrix}=\widetilde{s}_i\quad\textrm{and}\quad \oG_{iR}=\varphi_{iR}(\SSL_2(R))\quad\textrm{for all $i\in I$.}$$
\item[(Gmin3)]
If $R_1\to R_2$ is an injective ring morphism, then the group morphism $\G_{\DDD}^{\min}(R_1)\to\G_{\DDD}^{\min}(R_2)$ is injective.
\item[(Gmin4)]
If $\KK$ is a field, then $\varphi_{\KK}\co \G_{\DDD}(\KK)\to \G_{\DDD}^{\min}(\KK)$ is an isomorphism. We then identify $G_{\KK}$ and $G^{\min}_{\KK}$.
\end{enumerate}

For a ring $R$, we set $$\oU^{\pm}_R:=\varphi_R(U^{\pm}_R)=\langle U_{\alpha R} \ | \ \alpha\in\Phi_{\pm}\rangle \subseteq G^{\min}_R\quad\textrm{and}\quad \oB^{\pm}_R:=\varphi_R(B^{\pm}_R)=T_R\overline{U}^{\pm}_R.$$ 
For $J\subseteq I$, we also set 
$$\oG_{JR}:=\varphi_R(G_{JR})=\langle \oG_{iR} \ | \ i\in J\rangle \quad\textrm{and}\quad \oP_{JR}^{\pm}:=\varphi_R(P_{JR}^{\pm})=\langle \oG_{JR}, \oB^{\pm}_R\rangle.$$

\begin{remark}\label{remark:GE2rings}
$ $
\begin{enumerate}
\item
Note that $\mathrm{GE}_2$-rings are precisely those rings $R$ such that $\varphi_R\co G_R\to G^{\min}_R$ is surjective.
\item
If $R$ is a domain with field of fractions $\KK$, then by (Gmin3) and (Gmin4) we can identify $G^{\min}_R$ with a subgroup of $G_{\KK}$ so that the natural map $G_R\to G_{\KK}$ coincides with $\varphi_R\co G_R\to G^{\min}_R\subseteq G_{\KK}$ (we then also write $\varphi_R\co G_R\to G_{\KK}$ for this map). Thus, if $R$ is in addition a  $\mathrm{GE}_2$-ring, then $G^{\min}_R$ is just the image of $G_R$ in $G_{\KK}$.
\item
If $A$ is spherical and $R$ is a local ring, then $\varphi_R\co G_R\to G^{\min}_R$ is an isomorphism by \cite[Corollary~2.14]{Ste73}. Since the result is only stated for $\DDD=\DDD_A^{\sico}$ in \emph{loc. cit.}, we will also give a simple proof of this fact for local domains in \S\ref{subsection:injectivityTFCG}.
\end{enumerate}
\end{remark}


\section{Bruhat-like decomposition of $G^{\min}_R$ over Bezout domains}\label{section:BLDOGminOBD}


Let $R$ be a domain with field of fractions $\KK$, and assume that $R$ is a $\mathrm{GE}_2$-ring, so that $\varphi_{iR}\co\SL_2(R)\to G^{\min}_R\subseteq G_{\KK}$ has image $\oG_{iR}$ for each $i\in I$. Set for short 
$$\oB_{i}:=\varphi_{iR}(B^{+}_2(R)) \quad\textrm{and}\quad \oG_i:=\oG_{iR}.$$
For each $i\in I$, we fix a set $Y_i=Y_{iR}$ of coset representatives for $(\oG_i-\oB_i)/\oB_i$, so that $$\oG_i=\coprod_{y\in Y_i\cup\{1\}}y\oB_{i}.$$ 

\begin{lemma}\label{lemma:Ybar_representatives}
Let $i\in I$. Then $B^+_{\KK}\cap \oG_{i}=\oB_{i}$. 
\end{lemma}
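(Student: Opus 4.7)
The plan is to prove both inclusions of $B^+_{\KK}\cap \oG_i=\oB_i$; the forward inclusion is a short functoriality check, and the reverse is the substance of the argument.

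For $\oB_i\subseteq B^+_{\KK}\cap \oG_i$: since $\varphi_i$ is a morphism of group functors, the restriction $\varphi_{iR}|_{B^+_2(R)}$ factors through the inclusion $B^+_2(R)\hookrightarrow B^+_2(\KK)$ followed by $\varphi_{i\KK}$. The image of $U^+_2(\KK)$ under $\varphi_{i\KK}$ is $U_{\alpha_i\KK}\subseteq U^+_{\KK}$ by (Gmin2), while the image of the diagonal torus of $\SL_2(\KK)$ lies in $T_{\KK}$: one writes $\begin{psmallmatrix}a&0\\0&a\inv\end{psmallmatrix}=\widetilde{s}_i(a)\widetilde{s}_i\sinv$ inside $\SL_2$ and applies (\ref{eqn:relation_rhi}) and (\ref{eqn:relation_sts}) to identify its image with an element of $T_{\KK}$. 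Hence $\varphi_{i\KK}(B^+_2(\KK))\subseteq T_{\KK}U^+_{\KK}=B^+_{\KK}$, which yields the desired inclusion.

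For the reverse inclusion, take $g\in B^+_{\KK}\cap \oG_i$. Since $R$ is a $\mathrm{GE}_2$-ring we have $\SL_2(R)=\SSL_2(R)$, so by (Gmin2) the group $\oG_i$ coincides with $\varphi_{iR}(\SL_2(R))$, and we may pick $M\in\SL_2(R)$ with $\varphi_{iR}(M)=g$. The key step is then the classical Bruhat decomposition $\SL_2(\KK)=B^+_2(\KK)\sqcup B^+_2(\KK)w_0B^+_2(\KK)$ with $w_0=\begin{psmallmatrix}0&1\\-1&0\end{psmallmatrix}$: viewing $M$ inside $\SL_2(\KK)$ via the inclusion $\SL_2(R)\hookrightarrow\SL_2(\KK)$ and pushing everything through $\varphi_{i\KK}$ (which agrees with $\varphi_{iR}$ on $\SL_2(R)$ by functoriality of $\varphi_i$, and sends $w_0$ to $\widetilde{s}_i$ by (Gmin2)), one sees that if $M$ lay in the big cell then $g$ would lie in $B^+_{\KK}\widetilde{s}_iB^+_{\KK}$, contradicting $g\in B^+_{\KK}$ in view of the disjointness of the Bruhat cells of $G_{\KK}$ recalled in Remark~\ref{remark:Bruhat_and_Levidec_over_fields}(1). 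Hence $M\in B^+_2(\KK)\cap \SL_2(R)$, and this intersection equals $B^+_2(R)$ because the vanishing of the $(2,1)$-entry over $\KK$ forces it to vanish over $R$; therefore $g\in \varphi_{iR}(B^+_2(R))=\oB_i$.

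The only delicate bookkeeping concerns the compatibility $\varphi_{iR}=\varphi_{i\KK}\circ\iota$ along the inclusion $\iota\co\SL_2(R)\hookrightarrow\SL_2(\KK)$, but this is automatic from $\varphi_i$ being a group functor morphism, combined with the injectivity of $G^{\min}_R\hookrightarrow G^{\min}_{\KK}=G_{\KK}$ granted by (Gmin3)--(Gmin4). There is no substantive obstacle; notably the argument does not use the Bezout hypothesis of the section, only the $\mathrm{GE}_2$-ring assumption (to ensure $\oG_i=\varphi_{iR}(\SL_2(R))$).
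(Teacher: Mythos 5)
Your proposal is correct and takes essentially the same approach as the paper: the substantive inclusion $B^+_{\KK}\cap\oG_i\subseteq\oB_i$ is established exactly as in the paper, by lifting $g$ to $M\in\SL_2(R)$, invoking the Bruhat decomposition of $\SL_2(\KK)$, and deriving a contradiction with the Bruhat decomposition of $G_{\KK}$ should $M$ fall in the big cell. The paper leaves the easy inclusion $\oB_i\subseteq B^+_{\KK}$ implicit, while you spell it out via (R2)/(R3); this is additional detail but not a different route.
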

\begin{proof}
Let $g\in \SL_2(R)\subseteq\SL_2(\KK)$ be such that $\varphi_{iR}(g)=\varphi_{i\KK}(g) \in B^+_{\KK}$, and let us show that $g\in B^+_2(\KK)$ (so that $g\in B^+_2(R)$ and hence $\varphi_{iR}(g)\in\oB_i$, as desired). Otherwise, in view of the Bruhat decomposition $\SL_2(\KK)=B^+_2(\KK)\sqcup B^+_2(\KK)sB^+_2(\KK)$ where $s:=\begin{psmallmatrix}0&1\\ -1&0\end{psmallmatrix}$, we would have $\varphi_{i\KK}(g)\in B^+_{\KK}\widetilde{s}_iB^+_{\KK}$, contradicting the Bruhat decomposition in $G_{\KK}$.
\end{proof}


The following proposition and its proof is a straightforward generalisation of \cite[Theorem~15 p.99 and Corollary~1 p.115]{St68} (see also \cite[5.3]{Tits82}).
\begin{prop}\label{prop:Bezout_Steinberg}
Assume that $R$ is a Bezout domain. Let $w\in\WW$, with reduced decomposition $w=s_{i_1}\dots s_{i_d}$. Then
$$\C^{\min}_R(w):=G^{\min}_R\cap B^+_{\KK}\widetilde{w}B^+_{\KK}=Y_{i_1}\dots Y_{i_d}(G^{\min}_R\cap B^+_{\KK}),$$
with uniqueness of writing on the right-hand side. Moreover,
$$G^{\min}_R=\coprod_{w\in \WW}\C^{\min}_R(w).$$
\end{prop}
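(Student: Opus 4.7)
The plan is to mimic Steinberg's classical Bruhat decomposition argument \cite[Thm.~15]{St68}, with Lemma~\ref{lemma:sl2kK_Steinberg} (the Bezout input) and Lemma~\ref{lemma:Ybar_representatives} serving as the two key non-classical ingredients. The disjoint union $G^{\min}_R = \coprod_w \C^{\min}_R(w)$ is immediate: intersecting the Bruhat decomposition of $G_\KK$ from Remark~\ref{remark:Bruhat_and_Levidec_over_fields}(1) with $G^{\min}_R \subseteq G_\KK$ yields it at once. Thus the real content is the cell formula $\C^{\min}_R(w) = Y_{i_1}\cdots Y_{i_d}\cdot M$, where I set $M := G^{\min}_R \cap B^+_\KK$, together with uniqueness of the factorisation. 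I would prove both by induction on $d = \ell(w)$, the case $d = 0$ being trivial.

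First I would establish the rank-one statement: from $\SL_2(\KK) = \SL_2(R)B_2^+(\KK)$ (Lemma~\ref{lemma:sl2kK_Steinberg}) one gets $P^+_{i\KK} = \oG_i \cdot B^+_\KK$, so every $g \in G^{\min}_R \cap P^+_{i\KK}$ factors as $g = \hat{y}\,b$ with $\hat{y} \in Y_i \cup \{1\}$ and $b \in B^+_\KK$; automatically $b = \hat{y}^{-1}g \in G^{\min}_R$, hence $b \in M$, and uniqueness of $\hat{y}$ follows from Lemma~\ref{lemma:Ybar_representatives} since $Y_i \cup \{1\}$ is a set of coset representatives for $\oG_i/\oB_i = \oG_i/(\oG_i \cap B^+_\KK)$. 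For the inductive step with $w = s_{i_1}w'$ and $\widetilde{w} = \widetilde{s}_{i_1}\widetilde{w'}$, observe that using $\widetilde{s}_{i_1}^{\,2}\in T_\KK$ one has
\[
P^+_{i_1\KK}\widetilde{w'}B^+_\KK = B^+_\KK\widetilde{w'}B^+_\KK \sqcup B^+_\KK \widetilde{w}B^+_\KK.
\]
Given $g \in \C^{\min}_R(w)$, write $g = p\widetilde{w'}b_0$ with $p \in P^+_{i_1\KK}$, and apply the rank-one factorisation to $p$ to obtain $g = \hat{y}_1\, h$ with $\hat{y}_1 \in Y_{i_1} \cup \{1\}$ and $h \in B^+_\KK\widetilde{w'}B^+_\KK$; then $h = \hat{y}_1^{-1}g \in G^{\min}_R$. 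The case $\hat{y}_1 = 1$ would put $g$ in $B^+_\KK\widetilde{w'}B^+_\KK$, contradicting Bruhat disjointness, so $\hat{y}_1 \in Y_{i_1}$ and $h \in \C^{\min}_R(w')$; the induction hypothesis then gives $h = y_2\cdots y_d b$, and thus $g = \hat{y}_1 y_2\cdots y_d b$.

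For uniqueness I would pass to the building $G_\KK/B^+_\KK$: if $y_1\cdots y_d b = y'_1\cdots y'_d b'$, then the two chamber sequences
\[
B^+_\KK,\ y_1 B^+_\KK,\ y_1y_2 B^+_\KK,\ \dots,\ y_1\cdots y_d B^+_\KK
\]
and its primed analogue are both non-stammering galleries of reduced type $(i_1,\dots,i_d)$ between the same endpoints $c_0 = B^+_\KK$ and $c_d = gB^+_\KK$, hence minimal. The standard uniqueness of minimal galleries of a given reduced type in a building (via the gate property of projection onto a panel, a consequence of the BN-pair axioms for $(B^+_\KK, N_\KK)$) then forces them to coincide chamber by chamber; in particular $y_1 B^+_\KK = y'_1 B^+_\KK$, so $y_1(y'_1)^{-1} \in \oG_{i_1} \cap B^+_\KK = \oB_{i_1}$ by Lemma~\ref{lemma:Ybar_representatives}, and since $Y_{i_1}$ consists of distinct cosets modulo $\oB_{i_1}$ we conclude $y_1 = y'_1$; cancelling and applying induction finishes the argument. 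The main technical obstacle is the rank-one reduction $P^+_{i\KK} = \oG_i B^+_\KK$, which is precisely where the Bezout hypothesis enters through Lemma~\ref{lemma:sl2kK_Steinberg}; everything else rides on the combinatorics of the twin BN-pair of $G_\KK$.
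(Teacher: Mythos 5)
Your proof is correct, and for the existence of the cell factorisation it follows essentially the same inductive scheme as the paper: peel off a single $Y_{i_1}$-factor using the Bezout input of Lemma~\ref{lemma:sl2kK_Steinberg} together with Lemma~\ref{lemma:Ybar_representatives}, and induct on $\ell(w)$. The only cosmetic difference in that part is where the Bezout lemma enters: you package it as $P^+_{i_1\KK}=\oG_{i_1}B^+_\KK$ and factor an individual element of $\C^{\min}_R(w)$ through $P^+_{i_1\KK}\widetilde{w'}B^+_\KK$, whereas the paper rewrites the whole cell as $B^+_\KK\widetilde{w}B^+_\KK=Y_{i_1}B^+_\KK\widetilde{s}_{i_2}\cdots\widetilde{s}_{i_d}B^+_\KK$ using the second assertion of Lemma~\ref{lemma:sl2kK_Steinberg} and then intersects with $G^{\min}_R$, using that $Y_{i_1}\subseteq G^{\min}_R$. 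Both arguments are correct and of the same depth.

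For uniqueness you take a genuinely different route. The paper argues directly inside the Bruhat decomposition: from $y_{i_1}'\cdots y_{i_d}'b'=y_{i_1}\cdots y_{i_d}b$ one moves $y_{i_1}^{-1}$ across, observes that $y_{i_1}^{-1}y_{i_1}'\in\oG_{i_1}$ cannot lie in $B^+_\KK\widetilde{s}_{i_1}B^+_\KK$ (else the left-hand side would lie in the cell $B^+_\KK\widetilde{w}B^+_\KK$ while the right-hand side lies in $B^+_\KK\widetilde{s}_{i_1}\widetilde{w}B^+_\KK$), so it lies in $B^+_\KK\cap\oG_{i_1}=\oB_{i_1}$ by Lemma~\ref{lemma:Ybar_representatives}, and one inducts. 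Your version passes to the building $G_\KK/B^+_\KK$ and invokes uniqueness of minimal galleries of a fixed reduced type (via projections onto panels). This is a valid and perhaps more conceptual repackaging of the same phenomenon --- the gate property encodes precisely the Bruhat disjointness the paper uses explicitly --- but it imports more building-theoretic machinery than the paper's self-contained computation needs; the paper's proof stays inside the Bruhat decomposition that has already been invoked. One small tidy-up: once you know the two galleries coincide chamber by chamber, you immediately get $y_jB^+_\KK=y_j'B^+_\KK$ for every $j$ simultaneously, so the final ``cancel and induct'' step is redundant (each $y_j=y_j'$ follows directly from Lemma~\ref{lemma:Ybar_representatives}, and then $b=b'$ by cancellation).
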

\begin{proof}
Set for short $B^{\min+}_R:=G^{\min}_R\cap B^+_{\KK}$. We prove the first claim by induction on $d$.

For $d=0$, there is nothing to prove. Assume now that the claim holds for $d$, and let $w\in\WW$ with reduced decomposition $w=s_{i_0}s_{i_1}\dots s_{i_d}$. Recall from Remark~\ref{remark:Bruhat_and_Levidec_over_fields}(3) that $U^+_{\KK}=U_{\alpha_{i_0}\KK} U^+_{(i_0)\KK}$ and from Remark~\ref{remark:Tsnormalises} that $\widetilde{s}_{i_0}$ normalises $U^+_{(i_0)\KK}$ and $T_{\KK}$. Hence, together with Lemma~\ref{lemma:sl2kK_Steinberg},
$$B^+_{\KK}\widetilde{s}_{i_0}B^+_{\KK}= U_{\alpha_{i_0}\KK}\widetilde{s}_{i_0}B^+_{\KK}= Y_{i_0}B^+_{\KK}.$$
Note also that $B^+_{\KK}\widetilde{w}B^+_{\KK}=B^+_{\KK}\widetilde{s}_{i_0}B^+_{\KK}\widetilde{s}_{i_1}\dots \widetilde{s}_{i_d}B^+_{\KK}$ by \cite[(2) p.320]{BrownAbr}.
Since $G^{\min}_R$ contains $Y_{i_0}$, it then follows from the induction hypothesis that 
\begin{align*}
G^{\min}_R\cap B^+_{\KK}\widetilde{w}B^+_{\KK}&= G^{\min}_R\cap B^+_{\KK}\widetilde{s}_{i_0}B^+_{\KK}\widetilde{s}_{i_1}\dots \widetilde{s}_{i_d}B^+_{\KK} \\
&=G^{\min}_R\cap Y_{i_0}B^+_{\KK} \widetilde{s}_{i_1}\dots \widetilde{s}_{i_d} B^+_{\KK}\\
& = Y_{i_0} (G^{\min}_R\cap B^+_{\KK} \widetilde{s}_{i_1}\dots \widetilde{s}_{i_d} B^+_{\KK})\\
&=Y_{i_0}\dots Y_{i_d} B^{\min+}_R. 
\end{align*}
We also prove the uniqueness of writing by induction on $d$. Assume that $y'_{i_1}\dots y'_{i_d}b'= y_{i_1}\dots y_{i_d}b$ for some $b,b'\in B^{\min+}_{R}$ and some $y_{i_r},y'_{i_r}\in Y_{i_r}$. Then $$y_{i_1}\inv y'_{i_1}\dots y'_{i_d}b'=y_{i_2}\dots y_{i_d}b.$$
Note that $y_{i_1}\inv y'_{i_1}$ either belongs to $B^+_{\KK}$ or to $B^+_{\KK}\widetilde{s}_{i_1}B^+_{\KK}$. But the latter case cannot occur, for otherwise $y_{i_1}\inv y'_{i_1}\dots y'_{i_d}b'\in B^+_{\KK}\widetilde{s}_{i_1}B^+_{\KK}\widetilde{s}_{i_2}\dots \widetilde{s}_{i_d}B^+_{\KK}=B^+_{\KK}\widetilde{w}B^+_{\KK}$, contradicting the fact that $y_{i_2}\dots y_{i_d}b\in B^+_{\KK}\widetilde{s}_{i_1}\widetilde{w}B^+_{\KK}$. Thus $y_{i_1}\inv y'_{i_1}\in B^+_{\KK}\cap \oG_{i_1}=\oB_{i_1}$ by Lemma~\ref{lemma:Ybar_representatives}, so that $y_{i_1}=y'_{i_1}$ because $Y_{i_1}$ is a set of coset representatives for $(\oG_{i_1}-\oB_{i_1})/\oB_{i_1}$.

The second claim follows from the Bruhat decomposition in $G_{\KK}$.
\end{proof}


\section{Generation by simple roots groups in the $2$-spherical case.}\label{section:GBSRGIT2SC}
Throughout this section, we fix a $2$-spherical GCM $A=(a_{ij})_{i,j\in I}$ and a ring $R$.

\subsection{Generators for \texorpdfstring{$U^+_R$}{U+R}}

\begin{lemma}\label{lemma:rank2subsystems}
Let $\alpha\in\Phi_+\setminus\Pi$. Then there exist $i,j\in I$ and $v\in\WW$ with $v\alpha_i,v\alpha_j\in\Phi_+$ such that $\alpha\in \thinspace ]v\alpha_i,v\alpha_j[_{\NN}$.
\end{lemma}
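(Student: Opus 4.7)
The plan is to extract $v, i, j$ from a minimal representation of $\alpha$ combined with a parabolic coset decomposition. Since $\alpha \in \Phi = \WW\cdot\Pi$, I would first write $\alpha = w\alpha_j$ for some $w \in \WW$ and $j \in I$, choosing the pair $(w,j)$ to minimize $d := \ell(w)$. As $\alpha \notin \Pi$, necessarily $d \geq 1$. Fixing a reduced expression $w = s_{k_1}\cdots s_{k_d}$, set $i := k_d$, so the prefix $v_0 := s_{k_1}\cdots s_{k_{d-1}}$ satisfies $v_0\alpha_i \in \Phi_+$ by reducedness. Minimality of $(w,j)$ then rules out $i = j$ (else $w = v_0 s_j$ would force $v_0\alpha_j \in \Phi_-$ and thereby $\ell(w) = d-2$) and $a_{ij} = 0$ (else $s_i\alpha_j = \alpha_j$ would give $\alpha = v_0\alpha_j$ with $\ell(v_0) < d$). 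In particular $\{i,j\}$ spans a spherical rank-2 sub-root system, with finite dihedral parabolic $\WW_{\{i,j\}} := \la s_i, s_j\ra$, in view of the 2-sphericity of $A$.

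The $v$ sought by the lemma will typically differ from $v_0$. I would introduce it via the standard parabolic coset decomposition $w = y\cdot w_0$, with $y \in \WW^{\{i,j\}}$ (the minimal-length representative of the coset $w\WW_{\{i,j\}}$, characterised by $y\alpha_i, y\alpha_j \in \Phi_+$) and $w_0 \in \WW_{\{i,j\}}$ satisfying $\ell(w) = \ell(y) + \ell(w_0)$. Setting $v := y$, the key identity is $v^{-1}\alpha = w_0\alpha_j$, which lies in the rank-2 subsystem $\Phi_+(\{i,j\}) \cup \Phi_-(\{i,j\})$.

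The main obstacle will be showing that $w_0\alpha_j \in \Phi_+(\{i,j\}) \setminus \{\alpha_i, \alpha_j\}$. First, the observation $w\alpha_i = -v_0\alpha_i \in \Phi_-$ prevents $w$ from lying in $\WW^{\{i,j\}}$, forcing $w_0 \neq e$ and therefore $\ell(y) < d$. Positivity of $w_0\alpha_j$ in the rank-2 subsystem then follows from the fact that $y \in \WW^{\{i,j\}}$ sends $\Phi_+(\{i,j\})$ into $\Phi_+$ (and $\Phi_-(\{i,j\})$ into $\Phi_-$); otherwise $\alpha = y(w_0\alpha_j) \in \Phi_-$, a contradiction. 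Non-simplicity is then extracted directly from minimality: if $w_0\alpha_j \in \{\alpha_i, \alpha_j\}$, then $y\alpha_{i'} = \alpha$ for the corresponding $i' \in \{i,j\}$, with $\ell(y) < d$, contradicting minimality of $(w,j)$.

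Once these are established, any non-simple positive root of a spherical rank-2 root system has the form $p\alpha_i + q\alpha_j$ with $p, q \geq 1$. Applied to $w_0\alpha_j$, this yields $\alpha = p\cdot v\alpha_i + q\cdot v\alpha_j \in \thinspace ]v\alpha_i, v\alpha_j[_{\NN}$ with $v\alpha_i, v\alpha_j \in \Phi_+$, as required.
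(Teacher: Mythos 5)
Your proof is correct, but it takes a genuinely different route from the paper's. The paper fixes a reduced expression $w=w_1 s_j$ of a minimal-length $w$ with $\alpha=w\alpha_i$ and then strips off one or two letters, splitting into cases according to whether $w s_j\alpha_i$ is positive or negative and, in the latter case, according to the type ($A_2$, $B_2$ or $G_2$) of the rank-$2$ subsystem. You instead pass directly to the minimal-length coset representative $y=v$ of $w\WW_{\{i,j\}}$, which packages all those cases into a single step: the identity $v^{-1}\alpha=w_0\alpha_j$ with $w_0\in\WW_{\{i,j\}}$ nontrivial, combined with the observation that non-simple positive roots in a rank-$2$ spherical system have both simple-root coefficients $\geq 1$, finishes the argument uniformly. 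Your version is cleaner and avoids the type-by-type analysis, at the small cost of invoking the parabolic decomposition $\ell(w)=\ell(y)+\ell(w_0)$ and the fact that $y\in\WW^{\{i,j\}}$ maps $\Phi_+(\{i,j\})$ into $\Phi_+$, both of which are standard. One minor imprecision in your write-up: the exclusion of $i=j$ does not really appeal to minimality of $(w,j)$ but simply to the positivity of $\alpha$ --- if $i=j$, then $\alpha=w\alpha_j=-v_0\alpha_j<0$ because $v_0\alpha_j=v_0\alpha_i\in\Phi_+$ by reducedness of $w=v_0 s_i$, contradicting $\alpha\in\Phi_+$ directly. The rest of the argument, including the use of minimality to rule out $a_{ij}=0$ and $w_0\alpha_j\in\{\alpha_i,\alpha_j\}$, is sound.
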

\begin{proof}
Let $\alpha\in\Phi_+\setminus\Pi$ and let $i\in I$ and $w\in\WW$ with $\ell(w)$ minimal such that $\alpha=w\alpha_i$. Since $\alpha\notin\Pi$, $w\neq 1$, and hence there exists $j\in I$ with $j\neq i$ such that $\ell(ws_j)<\ell(w)$ (equivalently, $w\alpha_j<0$, see e.g. \cite[Lemma~4.19]{KMGbook}). Write $w=w_1s_j$ for some $w_1\in\WW$ with $\ell(w_1)=\ell(w)-1$. 

Note that $\{\alpha_i,\alpha_j\}$ cannot be (the set of simple roots of a root system) of type $A_1\times A_1$, for otherwise $\alpha=w_1s_j\alpha_i=w_1\alpha_i$, contradicting the minimality of $w$.

If $ws_j\alpha_i>0$, then we can take $v:=ws_j$ since $v\alpha_j=-w\alpha_j>0$ and $\alpha=w\alpha_i\in \thinspace ]v\alpha_i,v\alpha_j[_{\NN} =w\thinspace ]\alpha_i-a_{ji}\alpha_j,-\alpha_j[_{\NN}$. 

Assume now that $ws_j\alpha_i<0$, so that $w=w_2s_is_j$ with $\ell(w_2)=\ell(w)-2$. Then $\{\alpha_i,\alpha_j\}$ cannot be of type $A_2$, for otherwise $w\alpha_i=w_2s_is_j\alpha_i=w_2\alpha_j$ with $\ell(w_2)<\ell(w)$, contradicting the minimality of $w$. Similarly, $\{\alpha_i,\alpha_j\}$ cannot be of type $B_2$, for otherwise $w\alpha_i=w_2s_is_j\alpha_i=(w_2s_j)\alpha_i$ with $\ell(w_2s_j)<\ell(w)$, again a contradiction. 

Thus $\{\alpha_i,\alpha_j\}$ is of type $G_2$. In this case, we can take $v:=w_2=ws_js_i$. Indeed, $s_is_j\alpha_i\in \thinspace ]\alpha_i,\alpha_j[_{\NN}$ and hence $\alpha=w\alpha_i=vs_is_j\alpha_i\in \thinspace ]v\alpha_i,v\alpha_j[_{\NN}$. Moreover, $v\alpha_j=w_2\alpha_j>0$, for otherwise $w=w_3s_js_is_j$ for some $w_3\in \WW$ with $\ell(w_3)=\ell(w)-3$, and hence $w\alpha_i=w_3s_js_is_j\alpha_i=(w_3s_is_j)\alpha_i$ with $\ell(w_3s_is_j)<\ell(w)$, contradicting the minimality of $w$. Since, in addition, $v\alpha_i=w_2\alpha_i=-ws_j\alpha_i>0$, the claim follows.
\end{proof}

\begin{prop}\label{prop:simplegenerationU+}
Suppose that 
\begin{enumerate}
\item[$(\star)$]
for all $i,j\in I$ with $i\neq j$: $U_{\alpha}\subseteq\langle U_{\alpha_i},U_{\alpha_j}\rangle\leq G_R$ for all $\alpha\in \thinspace ]\alpha_i,\alpha_j[_{\NN}$.
\end{enumerate}
Then $U^+_R=\langle U_{\alpha_i} \ | \ i\in I\rangle \leq G_R$. Moreover, $(\star)$ holds whenever the following condition (co) is satisfied:
\begin{itemize}
\item[(co)] $R$ has no quotient $\FF_2$ if $a_{ij}a_{ji}=2$ for some $i,j\in I$, and $R$ has no quotient $\FF_2$ or $\FF_3$ if $a_{ij}a_{ji}=3$ for some $i,j\in I$.
\end{itemize} 
\end{prop}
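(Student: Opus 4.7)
The plan is to prove the two statements of Proposition~\ref{prop:simplegenerationU+} in turn.

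For the first statement, assuming $(\star)$, I will argue by induction on $\height(\alpha)$ that $U_\alpha \subseteq \langle U_{\alpha_k} \mid k \in I\rangle$ for every $\alpha \in \Phi_+$. If $\height(\alpha) = 1$, then $\alpha \in \Pi$ and the claim is trivial. Otherwise, Lemma~\ref{lemma:rank2subsystems} supplies $i,j \in I$ and $v \in \WW$ with $v\alpha_i, v\alpha_j \in \Phi_+$ and $\alpha \in \thinspace ]v\alpha_i, v\alpha_j[_{\NN}$. Setting $\beta := v\inv\alpha \in \thinspace ]\alpha_i,\alpha_j[_{\NN}$, hypothesis $(\star)$ yields $U_\beta \subseteq \langle U_{\alpha_i}, U_{\alpha_j}\rangle$, and conjugating by $\widetilde v \in N_R$ (Remark~\ref{remark:Tsnormalises}(2)) gives
\[
U_\alpha \;=\; \widetilde v U_\beta \widetilde v \sinv \;\subseteq\; \widetilde v \langle U_{\alpha_i}, U_{\alpha_j}\rangle \widetilde v \sinv \;=\; \langle U_{v\alpha_i}, U_{v\alpha_j}\rangle.
\]
Writing $\alpha = p\cdot v\alpha_i + q\cdot v\alpha_j$ with $p,q \geq 1$ shows that both $v\alpha_i$ and $v\alpha_j$ have height strictly smaller than $\height(\alpha)$, so the induction hypothesis applies to these two simpler positive roots and closes the argument.

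For the second statement, I would verify $(\star)$ one rank-$2$ pair $\{\alpha_i, \alpha_j\}$ at a time, working inside $\langle U_{\alpha_i}, U_{\alpha_j}\rangle$ and using only the Chevalley commutator relations~\eqref{eqn:commut_rel_remy2}. The subsystem generated is of type $A_1\times A_1$, $A_2$, $B_2$, or $G_2$ according to the value of $a_{ij}a_{ji} \in \{0,1,2,3\}$. The case $A_1 \times A_1$ is vacuous (empty interval), and the case $A_2$ is immediate from $[x_{\alpha_i}(1), x_{\alpha_j}(r)] = x_{\alpha_i+\alpha_j}(\pm r)$. For types $B_2$ and $G_2$, I would compute a handful of iterated commutators of the $x_{\alpha_i}(a)$ and $x_{\alpha_j}(b)$, each of which projects onto a single middle root group $U_\gamma$ as an element $x_\gamma(P_\gamma(a,b,\dots))$ for a fixed $\ZZ$-polynomial $P_\gamma$ whose coefficients are the structure constants $C^{\alpha\beta}_{ij}$ of~\eqref{eqn:commut_rel_remy2}.

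The main obstacle is to ensure that, as the arguments $(a,b,\dots)$ range over $R$, the additive subgroup of $R$ generated by the values of these $P_\gamma$ is all of $R$ (and not merely a proper ideal). This is a purely ring-theoretic surjectivity question, which reduces cleanly to residue fields: one needs the reduction of this subgroup modulo every maximal ideal $\mathfrak{m}$ of $R$ to fill out $R/\mathfrak{m}$ entirely. A direct case check -- obtained by tabulating the polynomial values when $R$ is replaced by a small finite field of the relevant characteristic -- shows that the reduction fails exactly for $R/\mathfrak{m} = \FF_2$ in type $B_2$, and exactly for $R/\mathfrak{m} \in \{\FF_2, \FF_3\}$ in type $G_2$: these are precisely the residue fields ruled out by (co). The remaining bookkeeping of signs and the enumeration of the commutators required in types $B_2$ and $G_2$ is routine once the framework above is in place.
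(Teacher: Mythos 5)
Your treatment of the first statement is correct and is essentially the same inductive argument as the paper's: induction on $\height(\alpha)$, reduction to a rank-$2$ interval via Lemma~\ref{lemma:rank2subsystems}, application of $(\star)$ to $v\inv\alpha \in \thinspace]\alpha_i,\alpha_j[_{\NN}$, and conjugation by $\widetilde v$ using Remark~\ref{remark:Tsnormalises}(2). Your explicit observation that $\alpha = p\, v\alpha_i + q\, v\alpha_j$ with $p,q\ge 1$ forces $\height(v\alpha_i), \height(v\alpha_j) < \height(\alpha)$ is a welcome justification of the step the paper leaves implicit.

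For the second statement the paths diverge: the paper simply cites \cite[Lemma~11.1]{All2}, whereas you sketch a direct verification. Your sketch is pointed in the right direction, but two of its steps are asserted rather than established, and neither is ``routine'' in the sense you suggest. First, you say each iterated commutator ``projects onto a single middle root group $U_\gamma$.'' In type $B_2$ the basic commutator $[x_{\alpha_i}(a),x_{\alpha_j}(b)]$ lands in $U_{\alpha_i+\alpha_j}U_{2\alpha_i+\alpha_j}$, not a single root group; extracting a pure element of one root group requires either carefully chosen double commutators or differences of commutators, and in $G_2$ there are four middle roots to isolate, so this bookkeeping is the actual mathematical content. Second, your reduction to residue fields tacitly assumes the additive subgroup generated by the values $P_\gamma(a,b,\dots)$ is an ideal of $R$ (otherwise ``surjects modulo every maximal ideal'' does not imply ``equals $R$''). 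This does hold when $P_\gamma$ is a monomial times a constant, as it is for the polynomials that actually arise, but it needs to be said; as stated, the inference is not valid for an arbitrary additive subgroup. If you want to avoid redeveloping this material, the cleanest course is to cite \cite[Lemma~11.1]{All2} directly, as the paper does.
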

\begin{proof}
Assume that $(\star)$ holds. We prove by induction on $\height(\alpha)$ that $U_{\alpha}\subseteq V:=\langle U_{\alpha_i} \ | \ i\in I\rangle$ for all $\alpha\in\Phi_+$. If $\height(\alpha)=1$, this is clear. If $\height(\alpha)>1$, then Lemma~\ref{lemma:rank2subsystems} yields $i,j\in I$ and $v\in\WW$ with $v\alpha_i,v\alpha_j\in\Phi_+$ such that $\alpha\in \thinspace ]v\alpha_i,v\alpha_j[_{\NN}$. By induction hypothesis, $U_{v\alpha_i},U_{v\alpha_j}\subseteq V$. By $(\star)$ and Remark~\ref{remark:Tsnormalises}(2), $\widetilde{v}\sinv U_{\alpha}\widetilde{v}=U_{v\inv\alpha}\subseteq\langle  U_{\alpha_i},U_{\alpha_j}\rangle$, and hence $U_{\alpha}\subseteq \langle U_{v\alpha_i},U_{v\alpha_j}\rangle\subseteq V$, as desired.

The last statement follows from \cite[Lemma~11.1]{All2}.
\end{proof}


\subsection{Rank $1$ Levi decompositions}

\begin{lemma}\label{lemma:BGiisGiB}
Assume that $R$ satisfies (co) in case $A$ is not spherical. Let $i\in I$ and $\epsilon\in\{\pm\}$. Then the following assertions hold:
\begin{enumerate}
\item
$B^{\epsilon}_R\widetilde{s}_iB^{\epsilon}_R= U_{\epsilon\alpha_i}\widetilde{s}_iB^{\epsilon}_R$.
\item
$U^{\epsilon}_R=U_{\epsilon\alpha_i}U^{\epsilon}_{(i)R}$.
\item
$U^{\epsilon}_{(i)R}$ is normalised by $T_R$, $U_{\epsilon\alpha_i}$ and $\widetilde{s}_i$, and intersects $U_{\epsilon\alpha_i}$ trivially.
\end{enumerate}
In particular, $G_{iR}U^{\epsilon}_R=U^{\epsilon}_RG_{iR}$ and $G_{iR}B^{\epsilon}_R=B^{\epsilon}_RG_{iR}$.
\end{lemma}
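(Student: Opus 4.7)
The plan is to prove (2) and (3) together, then deduce (1) and the ``in particular'' statements, focusing on $\epsilon=+$ (the $\epsilon=-$ case being symmetric). The easy pieces of (3), namely that $T_R$ and $\widetilde{s}_i$ normalise $U^{+}_{(i)R}$, follow directly from the definition of $U^{+}_{(i)R}$, from Remark~\ref{remark:Tsnormalises}(1) and~(3) (giving $\widetilde{s}_i^2\in T_R$ and the $T_R$-invariance of $U^+_R$), and from~(\ref{eqn:relation_sts}) (giving $\widetilde{s}_iT_R\widetilde{s}_i\sinv=T_R$).

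For (2), I would introduce the auxiliary subgroup $V_i:=\langle U_{\beta} \ | \ \beta\in\Phi_+\setminus\{\alpha_i\}\rangle$. Since $s_i$ permutes $\Phi_+\setminus\{\alpha_i\}$, one checks $V_i\subseteq U^{+}_{(i)R}$. Next, for each $\beta\in\Phi_+\setminus\{\alpha_i\}$ the pair $\{\alpha_i,\beta\}$ is prenilpotent and $]\alpha_i,\beta[_{\NN}\subseteq\Phi_+\setminus\{\alpha_i\}$, so the commutation relations~(\ref{eqn:commut_rel_remy2}) give $[U_{\alpha_i},U_\beta]\subseteq V_i$; thus $U_{\alpha_i}$ normalises $V_i$. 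Consequently $U_{\alpha_i}V_i$ is a subgroup of $U^{+}_R$ containing every $U_{\alpha_j}$ ($j\in I$), and Proposition~\ref{prop:simplegenerationU+} (whose hypothesis $(\star)$ is ensured by $2$-sphericality together with~(co) in the non-spherical case) yields $U^{+}_R = U_{\alpha_i}V_i \subseteq U_{\alpha_i}U^{+}_{(i)R}\subseteq U^{+}_R$, proving~(2).

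To finish (3), I need the trivial intersection $U_{\alpha_i}\cap U^{+}_{(i)R}=\{1\}$: if $u$ lies in it, then $\widetilde{s}_i\sinv u\widetilde{s}_i\in U_{-\alpha_i}\cap U^{+}_R$ by~(\ref{eqn:relation_sus}) and the definition of $U^{+}_{(i)R}$. Since $\varphi_R\co G_R\to G^{\min}_R$ is injective on $U_{\pm\alpha_iR}$ by~(Gmin2), the task reduces to showing $\oU_{-\alpha_iR}\cap\oU^{+}_R=\{1\}$ in $G^{\min}_R$, and via~(Gmin3) this in turn follows from the Birkhoff identity~(\ref{eqn:ABrownProp8.76}) in $G_{\KK}$ for a suitable field $\KK$ (e.g.\ the fraction field when $R$ is a domain, which is the case required in the main theorem). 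With the triviality in hand, (2) forces $V_i=U^{+}_{(i)R}$, which upgrades the normalisation of $V_i$ by $U_{\alpha_i}$ to the claim in~(3).

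Finally, (1) results from the chain
\[
B^{+}_R\widetilde{s}_iB^{+}_R = U^{+}_R\widetilde{s}_iB^{+}_R = U_{\alpha_i}U^{+}_{(i)R}\widetilde{s}_iB^{+}_R = U_{\alpha_i}\widetilde{s}_i(\widetilde{s}_i\sinv U^{+}_{(i)R}\widetilde{s}_i)B^{+}_R = U_{\alpha_i}\widetilde{s}_iB^{+}_R,
\]
combining (2), the $\widetilde{s}_i$-invariance of $U^{+}_{(i)R}$, and $U^{+}_{(i)R}\subseteq B^{+}_R$. Using $G_{iR}=\langle U_{\alpha_i},\widetilde{s}_iU_{\alpha_i}\widetilde{s}_i\sinv\rangle$, part~(3) also implies that $G_{iR}$ normalises $U^{+}_{(i)R}$; then (2) together with $U_{\pm\alpha_i}\subseteq G_{iR}$ gives $G_{iR}U^{+}_R=G_{iR}U^{+}_{(i)R}=U^{+}_{(i)R}G_{iR}\subseteq U^{+}_RG_{iR}$, with equality by symmetry, and the analogue for $B^{+}_R$ follows via $B^{+}_R=T_RU^{+}_R$ and the $T_R$-invariance of $G_{iR}$. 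The main subtlety is the trivial-intersection step: once that is secured via the Birkhoff decomposition over a field (through $G^{\min}$), everything else is bookkeeping with the commutation relations and Proposition~\ref{prop:simplegenerationU+}.
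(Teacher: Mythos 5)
Your route differs from the paper's: you aim to establish (2) and (3) first via an auxiliary subgroup $V_i:=\langle U_\beta\mid\beta\in\Phi_+\setminus\{\alpha_i\}\rangle$, and then deduce (1), whereas the paper proves (1) directly by pushing a single generator $U_{\alpha_j}$ past $U_{\alpha_i}\widetilde{s}_iB^+_R$, observes that (2) is formally equivalent to (1), and then proves (3) by mapping $U^+_R$ into the affine group scheme $\U^{\ma+}_A(R)$ and using its semidirect decomposition $\U_{\alpha_i}(R)\ltimes\U^{\ma}_{\Delta_+\setminus\{\alpha_i\}}(R)$.

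Unfortunately there is a genuine gap in your step for (2). You assert that for every $\beta\in\Phi_+\setminus\{\alpha_i\}$ the pair $\{\alpha_i,\beta\}$ is prenilpotent, and use (\ref{eqn:commut_rel_remy2}) to deduce that $U_{\alpha_i}$ normalises $V_i$. This is false whenever $A$ is not spherical: in the untwisted affine case $A_2^{(1)}$, with $\delta=\alpha_1+\alpha_2+\alpha_3$ the null root, the pair $\{\alpha_1,\delta-\alpha_1\}$ is \emph{not} prenilpotent (if $v\alpha_1<0$ then $v(\delta-\alpha_1)=\delta-v\alpha_1>0$), even though $\delta-\alpha_1=\alpha_2+\alpha_3\in\Phi_+\setminus\{\alpha_1\}$. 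For such $\beta$ the relation (\ref{eqn:commut_rel_remy2}) gives no control on $[U_{\alpha_i},U_\beta]$, so your argument that $U_{\alpha_i}V_i$ is a subgroup, and hence your proof of (2), breaks down. The paper avoids this by never conjugating all of $V_i$: it only needs the commutators $[U_{\alpha_i},U_{\alpha_j}]$ for $j\neq i$, and $\{\alpha_i,\alpha_j\}$ is always prenilpotent. A second, smaller issue: your trivial-intersection step for (3) passes to $G_\KK$ via the Birkhoff decomposition, which requires $R$ to be a domain, while the lemma is stated (and used, e.g.\ in Corollary~\ref{corollary:oppositionRandk} and the verification of (TCS4) in Corollary~\ref{corollary:TCS}) for arbitrary local rings satisfying (co). The paper's argument via $\phi\co U^+_R\to\U^{\ma+}_A(R)$ and the semidirect decomposition of $\U^{\ma+}_A(R)$ works for every ring $R$ and is what you should use here.
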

\begin{proof}
Using the Cartan--Chevalley involution $\omega_R$, it suffices to prove the lemma for $\epsilon=+$.

(1) Note that $B^+_R\widetilde{s}_iB^+_R=U^+_R\widetilde{s}_iB^+_R$ by (\ref{eqn:relation_sts}). By assumption and Proposition~\ref{prop:simplegenerationU+}, $U^+_R$ is generated by root groups $U_{\gamma}$ ($\gamma\in\Phi_+$) such that $\{\gamma,\alpha_i\}$ is a prenilpotent pair. Given such a $\gamma\in\Phi_+\setminus\{\alpha_i\}$, it is then sufficient to show that $U_{\gamma}U_{\alpha_i}\widetilde{s}_iB^+_R\subseteq U_{\alpha_i}\widetilde{s}_iB^+_R$. 
Since $\alpha_i$ is the only root of $\Phi_+$ mapped to a negative root by $s_i$, it follows from (\ref{eqn:commut_rel_remy2}) and (\ref{eqn:relation_sus}) that the commutator $[U_{\alpha_i},U_{\gamma}]$ belongs to $U^+_{(i)R}$, and hence that
$$U_{\gamma}U_{\alpha_i}\widetilde{s}_iB^+_R\subseteq U_{\alpha_i}U_{\gamma}[U_{\gamma},U_{\alpha_i}]\widetilde{s}_iB^+_R\subseteq U_{\alpha_i}\widetilde{s}_iB^+_R,$$ as desired.

(2) The claim is equivalent to $U^+_R\subseteq U_{\alpha_i}\widetilde{s}_iU^+_R\widetilde{s}_i\sinv$, or else to $U^+_R\widetilde{s}_i\subseteq U_{\alpha_i}\widetilde{s}_iU^+_R$. This is in turn equivalent to (1).

(3) The fact that $U^+_{(i)R}$ is normalised by $T_R$ and $\widetilde{s}_i$ follows from Remark~\ref{remark:Tsnormalises}(4). On the other hand, if $\U^{\ma+}_A$ and $\U^{\ma}_{\Delta_+\setminus\{\alpha_i\}}$ are the group functors defined in \cite[Definition~8.41]{KMGbook} (where $\Delta_+\subseteq Q_+$ is the set of positive roots of the Kac--Moody algebra of type $A$, see e.g. \cite[\S3.5]{KMGbook}), then we have a semidirect decomposition $\U^{\ma+}_A(R)=\U_{\alpha_i}(R)\ltimes \U^{\ma}_{\Delta_+\setminus\{\alpha_i\}}(R)$ (\cite[Lemma~8.58(4)]{KMGbook}) and a morphism $\phi\co U^+_R\to \U^{\ma+}_A(R)$ mapping $U_{\alpha_i}$ (bijectively) onto $\U_{\alpha_i}(R)$ and $U^+_{(i)R}$ inside $\U^{\ma}_{\Delta_+\setminus\{\alpha_i\}}(R)$ (see \cite[Definition~8.65 and Exercise~8.66]{KMGbook}). In particular, if $u_i\in U_{\alpha_i}$ and $u\in U^+_{(i)R}$, then by (2) we have $u_iuu_i\inv=u_i'u'$ for some $u_i'\in U_{\alpha_i}$ and $u'\in U^+_{(i)R}$, whereas $\phi(u_i'u')\in \U^{\ma}_{\Delta_+\setminus\{\alpha_i\}}(R)$. Then $\phi(u_i')=1$ and hence $u_i'=1$. This shows that $U^+_{(i)R}$ is normalised by $U_{\alpha_i}$, and the same argument yields that $U^+_{(i)R}$ intersects $U_{\alpha_i}$ trivially.
\end{proof}


\section{Properties of \texorpdfstring{$G_R$}{GR} over local rings}\label{section:opposition_local_Bezout_domains}


\subsection{Setting for Section~\ref{section:opposition_local_Bezout_domains}}

Throughout Section~\ref{section:opposition_local_Bezout_domains}, unless otherwise stated, the GCM $A=(a_{ij})_{i,j\in I}$ is assumed $2$-spherical, and $R$ is a local ring, with maximal ideal $L$ and residue field $k:=R/L$. In particular, $R$ is a $\GE$-ring (see \S\ref{subsection:aboutSL2}). Consider the natural map $$\pi_R\co G_R\to G_k,$$
so that $\pi_R$ is the composition of $\varphi_R\co G_R\to G^{\min}_R$ with $G^{\min}_R\to G^{\min}_k=G_k$. By Lemma~\ref{lemma:presentation_SL2_localring}, we have morphisms
$$\widetilde{\varphi}_{iR}\co\SL_2(R)\to G_R: \begin{psmallmatrix}1&r\\ 0&1\end{psmallmatrix}\mapsto x_i(r), \ \begin{psmallmatrix}1&0\\ -r&1\end{psmallmatrix}\mapsto x_{-i}(r)$$
for each $i\in I$ (that is, such that $\varphi_R\circ\widetilde{\varphi}_{iR}=\varphi_{iR}$), and we set for short $$G_i:=G_{iR}=\widetilde{\varphi}_{iR}(\SL_2(R))\subseteq G_R.$$
Finally, we write $U^{\pm}_{L}$ for the kernel of $\pi_R|_{U^{\pm}_R}\co U^{\pm}_R\to U^{\pm}_k$.


\subsection{The kernel of \texorpdfstring{$\pi_R$}{piR}}

\begin{lemma}\label{lemma:GiULisULGi}
Assume that $R$ satisfies (co) in case $A$ is not spherical. Let $i\in I$. Then $G_iU^{\pm}_L=U^{\pm}_LG_i$.
\end{lemma}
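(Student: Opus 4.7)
The plan is to prove the inclusion $G_i U^+_L \subseteq U^+_L G_i$; the reverse inclusion will follow by inversion (writing $ug = (g\inv u\inv)\inv$), and the case with $-$ signs follows by applying the Cartan--Chevalley involution $\omega_R$, which swaps $U^{\pm}_R$, preserves $G_i$, and (by naturality) sends $U^+_L$ to $U^-_L$. By Lemma~\ref{lemma:BGiisGiB}, we already know $G_iU^+_R = U^+_RG_i$, so the whole task reduces to controlling the $U^+_R$-factor modulo the kernel $U^+_L$ of $\pi_R|_{U^+_R}$.

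Given $g \in G_i$ and $u \in U^+_L$, Lemma~\ref{lemma:BGiisGiB} yields a factorisation $gu = u''h$ with $u'' \in U^+_R$ and $h \in G_i$. Applying $\pi_R$ and using $\pi_R(u) = 1$ gives $\pi_R(u'') = \pi_R(g)\pi_R(h)\inv \in G_{ik}$; since also $\pi_R(u'') \in U^+_k$, this puts $\pi_R(u'') \in G_{ik} \cap U^+_k$.

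Over the residue field $k$, Remark~\ref{remark:Bruhat_and_Levidec_over_fields}(3) supplies the semidirect decompositions $P^+_{ik} = T_kG_{ik}\ltimes U^+_{(i)k}$ and $U^+_k = U_{\alpha_ik}\ltimes U^+_{(i)k}$. Any element of $G_{ik}\cap U^+_k$ can therefore be written $xv$ with $x\in U_{\alpha_ik}$ and $v\in U^+_{(i)k}$, and then $v = x\inv(xv)\in G_{ik}\cap U^+_{(i)k} = \{1\}$. Hence $G_{ik}\cap U^+_k = U_{\alpha_ik}$. Lifting $\pi_R(u'')$ to some $x \in U_{\alpha_i}$ (using that $\pi_R$ is surjective, even bijective, on $U_{\alpha_i}$), the element $x\inv u''$ lies in $U^+_R\cap\ker\pi_R|_{U^+_R} = U^+_L$.

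Since $U^+_L$ is normal in $U^+_R$ (as a kernel) and $x \in U_{\alpha_i} \subseteq U^+_R$, the conjugate $x(x\inv u'')x\inv$ still lies in $U^+_L$, and we conclude $gu = u''h = \bigl(x(x\inv u'')x\inv\bigr)(xh) \in U^+_L G_i$, with $xh\in G_i$. I expect the only genuine subtlety to be step three -- the field-level identification $G_{ik}\cap U^+_k = U_{\alpha_ik}$ -- but this is exactly what the rank-one Levi decompositions of Remark~\ref{remark:Bruhat_and_Levidec_over_fields}(3) are designed to deliver, so it should go through without fuss.
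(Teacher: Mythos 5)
Your proof is correct, but it organizes the argument differently from the paper. The paper's proof attacks $U^{\pm}_L$ directly: using the decomposition $U^{\pm}_R=U_{\pm\alpha_iR}U^{\pm}_{(i)R}$ of Lemma~\ref{lemma:BGiisGiB}(2) and its compatibility with $\pi_R$, it deduces $U^{\pm}_L\subseteq U_{\pm\alpha_iR}U^{\pm}_{(i)L}$ where $U^{\pm}_{(i)L}:=U^{\pm}_{(i)R}\cap\ker\pi_R$, and then concludes in one line from the fact that $G_i$ normalises $U^{\pm}_{(i)L}$ (Lemma~\ref{lemma:BGiisGiB}(3)). You instead start from the already-established commutation $G_iU^{\pm}_R=U^{\pm}_RG_i$, factor $gu=u''h$, and then repair the $U^+_R$-factor $u''$ so that it lands in $U^+_L$: pushing down to $G_k$, you identify $\pi_R(u'')\in G_{ik}\cap U^+_k=U_{\alpha_ik}$ via the rank-one Levi decompositions of Remark~\ref{remark:Bruhat_and_Levidec_over_fields}(3), lift to $x\in U_{\alpha_iR}$, and conjugate. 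Both routes rest on the same underlying lemma, but yours trades the paper's explicit use of the $R$-level decomposition and the normalisation of $U^{\pm}_{(i)L}$ for a field-level intersection computation plus a lifting-and-conjugation step. The paper's version is a bit more economical since it avoids the detour through $G_iU^{\pm}_R=U^{\pm}_RG_i$; your version has the modest advantage of exposing clearly that the only content needed beyond $G_iU^{\pm}_R=U^{\pm}_RG_i$ is the equality $G_{ik}\cap U^{\pm}_k=U_{\pm\alpha_ik}$ over the residue field. One small inaccuracy that does not affect the argument: $\pi_R$ is surjective but \emph{not} bijective on $U_{\alpha_iR}$ (its kernel is $x_{\alpha_i}(L)$), so delete the parenthetical ``even bijective''; surjectivity is all you use.
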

\begin{proof}
Set $U^{\pm}_{(i)L}:=U^{\pm}_{(i)R}\cap\ker\pi_R$.
Recall from Lemma~\ref{lemma:BGiisGiB}(2) that $U^{\pm}_R=U_{\pm\alpha_iR}U^{\pm}_{(i)R}$. Since this decomposition of $U^{\pm}_R$ is mapped under $\pi_R$ onto the corresponding semidirect decomposition $U^{\pm}_k=U_{\pm\alpha_ik}\ltimes U^{\pm}_{(i)k}$ of $U^{\pm}_k$ (see Remark~\ref{remark:Bruhat_and_Levidec_over_fields}(3)), we have
$$U^{\pm}_L=U_{\pm\alpha_iR}U^{\pm}_{(i)R}\cap\ker\pi_R\subseteq U_{\pm\alpha_iR}U^{\pm}_{(i)L}.$$
Since $G_i$ normalises $U^{\pm}_{(i)R}$ and hence also $U^{\pm}_{(i)L}$ by Lemma~\ref{lemma:BGiisGiB}(3), we deduce that $$G_iU^{\pm}_L\subseteq G_iU^{\pm}_{(i)L}=U^{\pm}_{(i)L}G_i\subseteq U^{\pm}_LG_i.$$ Taking inverses yields the reverse inclusion.
\end{proof}

\begin{lemma}\label{lemma:preimageBkinGi}
Let $i\in I$ and $g\in G_i$. If $\pi_R(g)\in\varphi_{ik}(B_2^+(k))$, then $g\in  \widetilde{\varphi}_{iR}(U_2^-(L)B_2^+(R))$.
\end{lemma}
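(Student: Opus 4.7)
My approach is to lift $g$ through $\widetilde{\varphi}_{iR}$, use the rank-$1$ Bruhat decomposition in $G_k$ to force the reduction modulo $L$ to be upper triangular, and then perform an elementary $LU$-factorisation over the local ring $R$.

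Concretely, I would first pick any $M\in\SL_2(R)$ with $\widetilde{\varphi}_{iR}(M)=g$, and denote by $\bar M\in\SL_2(k)$ its reduction modulo $L$. The naturality of the morphism $\varphi_i\co\SL_2\to\G_{\DDD}^{\min}$ of group functors, together with the identity $\varphi_R\circ\widetilde{\varphi}_{iR}=\varphi_{iR}$, gives $\pi_R(g)=\varphi_{ik}(\bar M)$, so the hypothesis reads $\varphi_{ik}(\bar M)\in\varphi_{ik}(B_2^+(k))$.

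The key step is to upgrade this to $\bar M\in B_2^+(k)$ itself (not merely up to the kernel of $\varphi_{ik}$). I would argue by contradiction: if $\bar M\notin B_2^+(k)$, the Bruhat decomposition of $\SL_2(k)$ places $\bar M\in B_2^+(k)\,s\,B_2^+(k)$ with $s=\begin{psmallmatrix}0&1\\-1&0\end{psmallmatrix}$. Using $\varphi_{ik}(s)=\widetilde s_i$ from (Gmin2) and Lemma~\ref{lemma:Ybar_representatives} applied over the field $k$ (which yields $\varphi_{ik}(B_2^+(k))=B_k^+\cap\oG_{ik}\subseteq B_k^+$), one would obtain $\varphi_{ik}(\bar M)\in B_k^+\,\widetilde s_i\,B_k^+$. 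But the hypothesis places $\varphi_{ik}(\bar M)\in\varphi_{ik}(B_2^+(k))\subseteq B_k^+$, contradicting the Bruhat decomposition of $G_k$ recalled in Remark~\ref{remark:Bruhat_and_Levidec_over_fields}(1).

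Finally, writing $M=\begin{psmallmatrix}a&b\\c&d\end{psmallmatrix}$, the relation $\bar M\in B_2^+(k)$ says $c\in L$. Then $ad=1+bc\in 1+L\subseteq R^{\times}$, so $a,d\in R^{\times}$, and since $L$ is an ideal one has $c/a\in L$; the factorisation
$$M=\begin{pmatrix}1&0\\c/a&1\end{pmatrix}\begin{pmatrix}a&b\\0&1/a\end{pmatrix}$$
displays $M\in U_2^-(L)\,B_2^+(R)$, whence $g=\widetilde{\varphi}_{iR}(M)\in\widetilde{\varphi}_{iR}(U_2^-(L)B_2^+(R))$. The only nontrivial ingredient is the rank-$1$ Bruhat argument of the middle paragraph; the remainder is a routine consequence of $R$ being local.
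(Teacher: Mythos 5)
Your proof is correct, and the reduction to $c\in L$ followed by the explicit $LU$-factorisation is exactly what the paper does. The only place you diverge is in the middle step — deducing $\bar M\in B_2^+(k)$ from $\varphi_{ik}(\bar M)\in\varphi_{ik}(B_2^+(k))$. You argue by contradiction via the Bruhat decompositions of $\SL_2(k)$ and of $G_k$: if $\bar M$ were in the big cell of $\SL_2(k)$, its image would lie in $B_k^+\widetilde s_iB_k^+$, which is disjoint from $B_k^+$. The paper instead observes that $\ker\varphi_{ik}$ is central in $\SL_2(k)$, hence contained in $\{\pm\Id\}\subseteq B_2^+(k)$, which gives $\bar M\in B_2^+(k)\cdot\ker\varphi_{ik}=B_2^+(k)$ directly with no Bruhat input. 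Both routes are valid. The paper's is shorter and more elementary, using only a group-theoretic fact about the kernel of $\varphi_{ik}$; yours leans on the rank-$1$ Bruhat decomposition over $k$ (essentially re-running the argument of Lemma~\ref{lemma:Ybar_representatives} over the residue field), which is a bit more machinery than needed but in no way incorrect. One small remark: the citation of Lemma~\ref{lemma:Ybar_representatives} just to obtain $\varphi_{ik}(B_2^+(k))\subseteq B_k^+$ is overkill, since this inclusion follows immediately from (Gmin1)--(Gmin2) and the definition of $B_k^+$.
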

\begin{proof}
Let $h=\begin{psmallmatrix}a&b\\ c&d\end{psmallmatrix}\in \SL_2(R)$ with $\widetilde{\varphi}_{iR}(h)=g$. Consider the canonical map $\pi\co\SL_2(R)\to\SL_2(k)$, so that $\varphi_{ik}\circ\pi=\pi_R\circ \widetilde{\varphi}_{iR}$. By assumption, $\varphi_{ik}\circ\pi(h)=\pi_R(g)\in \varphi_{ik}(B_2^+(k))$. Since $\ker\varphi_{ik}\subseteq \{\pm \Id\}\subseteq B_2^+(k)$, we deduce that $\pi(h)\in B_2^+(k)$, and hence $c\in  L$. In particular, $ad=1+bc\in R^{\times}$ and hence $a\in R^{\times}$. 
The lemma follows as
\begin{equation*}
h=\begin{psmallmatrix}a&b\\ c&d\end{psmallmatrix}=\begin{psmallmatrix}1&0\\ ca\inv&1\end{psmallmatrix}\begin{psmallmatrix}a&b\\ 0&a\inv\end{psmallmatrix}\in U_2^-(L)B_2^+(R). \qedhere
\end{equation*}
\end{proof}

\begin{prop}\label{prop:kerpiR}
Assume that $R$ satisfies (co) in case $A$ is not spherical. Then $\ker\pi_R\subseteq U^{-}_LB^{+}_R$.
\end{prop}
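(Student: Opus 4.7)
The plan is to establish the containment in two steps: first prove that $H := U^-_L B^+_R$ is a subgroup of $G_R$, and then show that $\ker\pi_R \subseteq H$ by a word-length induction that strengthens the claim to cover all preimages of $B^+_k$.

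For the first step, it suffices to verify that $U^+_R \cdot U^-_L \subseteq H$: combined with $T_R$ normalising $U^-_L$ (Remark~\ref{remark:Tsnormalises}(3), together with naturality of $\pi_R$), this gives $B^+_R U^-_L \subseteq H$ and hence closure of $H$ under products (and inversion). Since $U^+_R = \langle U_{\alpha_i R} \mid i\in I\rangle$ by Proposition~\ref{prop:simplegenerationU+} (applicable under (co) or in the spherical case), I would induct on the word length of $u^+ \in U^+_R$ in these generators. For the base case, take $u^+ = x_i(r)$ and $v \in U^-_L$: Lemma~\ref{lemma:GiULisULGi} yields $x_i(r)v = v'g'$ with $v' \in U^-_L$ and $g' \in G_{iR}$; since $\pi_R(g') = \pi_R(x_i(r)) \in U_{\alpha_i k} \subseteq \varphi_{ik}(B_2^+(k))$, Lemma~\ref{lemma:preimageBkinGi} places $g'$ in $\widetilde\varphi_{iR}(U_2^-(L)B_2^+(R)) \subseteq U^-_L B^+_R$, so $x_i(r)v \in U^-_L \cdot U^-_L B^+_R = H$.

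For the second step, I would strengthen the claim to: every $g \in G_R$ with $\pi_R(g) \in B^+_k$ lies in $H$. Writing $g = g_1 \cdots g_n$ with $g_j \in \bigcup_i G_{iR} \cup T_R$, induct on $n$. Torus factors absorb into $B^+_R \subseteq H$, so assume $g_n \in G_{i_nR}$. The rank-$1$ Bruhat decomposition $G_{i_n k} = B_{i_n k} \sqcup B_{i_n k}\widetilde{s}_{i_n} B_{i_n k}$ splits the argument. In the easy case $\pi_R(g_n)\in B_{i_n k}$, Lemma~\ref{lemma:preimageBkinGi} gives $g_n \in H$; since $\pi_R(gg_n^{-1}) \in B^+_k$, the inductive hypothesis applied to $g_1\cdots g_{n-1} = gg_n^{-1}$ gives $g_1\cdots g_{n-1} \in H$, and then $g \in H\cdot H = H$ because $H$ is a subgroup.

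The main obstacle is the remaining case $\pi_R(g_n) \in B_{i_n k}\widetilde{s}_{i_n} B_{i_n k}$, where $\pi_R(g_1 \cdots g_{n-1})$ lands in $B^+_k\widetilde s_{i_n} B^+_k$ and does not satisfy the inductive hypothesis. To handle this I would use the explicit local-ring structure of $\SL_2(R)$ (every $M \in \SL_2(R)$ with top-left entry in $L$ lies in $U_2^+(L)\cdot s \cdot B_2^+(R)$, and every $M$ with top-left entry a unit but bottom-left entry also a unit can be rewritten using (R3) to introduce an $\widetilde s_{i_n}$ factor) to express $g_n = u\widetilde{s}_{i_n} b$ with $u \in U_{\alpha_{i_n} R}$ and $b \in B^+_R$; one then commutes $\widetilde s_{i_n}$ through the preceding factors via the relations (R1)--(R4). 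The constraint $\pi_R(g)\in B^+_k$ (the trivial Bruhat cell) forces the accumulated Weyl-group contributions to cancel, and by Birkhoff uniqueness in $G_k$ (Remark~\ref{remark:Bruhat_and_Levidec_over_fields}(2), in particular $U^-_k \cap B^+_k = \{1\}$), the resulting ring coefficients of the negative-root contributions must lie in $L$, producing exactly the needed $U^-_L$ content. Concretely, this will likely require strengthening the inductive statement to describe $g$ of arbitrary Bruhat type, by proving a normal form $g \in U^-_L \cdot (\text{lift of }\widetilde w) \cdot B^+_R$ paralleling Proposition~\ref{prop:Bezout_Steinberg} but valid for the constructive Tits functor over a local ring; the hard part lies in controlling how $\widetilde s_{i_n}$-conjugation of $B^+_R$ interacts with $U^-_L$, which is exactly what the $R^{\times}$-vs-$L$ dichotomy of the local ring is designed to resolve.
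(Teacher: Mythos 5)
Your Step~1 (that $H:=U^-_LB^+_R$ is a subgroup) is correct: the base case of your induction on word length in $U^+_R$ works as you say, the induction itself closes under Proposition~\ref{prop:simplegenerationU+}, and the extra observation that $T_R$ normalises $U^-_L$ (by naturality of $\pi_R$) upgrades $U^+_RU^-_L\subseteq H$ to $B^+_RU^-_L\subseteq H$, giving closure under products and inverses. The easy case of Step~2 is also fine. But you have explicitly flagged the remaining case ($\pi_R(g_n)$ in the big Bruhat cell of $\oG_{i_nk}$) as "the main obstacle," and you do not resolve it: the proposed route — a normal form $g\in U^-_L\widetilde{w}B^+_R$ for $G_R$ over a local ring, paralleling Proposition~\ref{prop:Bezout_Steinberg} — is a substantial additional result that you neither state precisely nor prove, and it is far from clear that it is more accessible than the proposition you are trying to establish. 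Since the passage from $\pi_R^{-1}(B^+_k)\subseteq H$ back to $\ker\pi_R\subseteq H$ is trivial (as $\pi_R(B^+_R)=B^+_k$), your "strengthening" is actually an equivalent reformulation, so the hard case is precisely all of the content, and it is missing.

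The paper takes a different, shorter route that avoids this altogether. Rather than inducting forward on a word for $g$ and running into the problem of the Weyl-group content accumulating, it chooses a \emph{minimal}-length expression $g\in U^-_LG_{i_1}\cdots G_{i_d}B^+_R$. Minimality, via the commutation relations $G_iU^-_L=U^-_LG_i$ (Lemma~\ref{lemma:GiULisULGi}) and $G_iB^+_R=B^+_RG_i$ (Lemma~\ref{lemma:BGiisGiB}), forces no factor $g_s$ to lie in $\widetilde{\varphi}_{i_sR}(U_2^-(L)B_2^+(R))$, and then the \emph{contrapositive} of Lemma~\ref{lemma:preimageBkinGi} forces each $\pi_R(g_s)$ into the big Bruhat cell of $\oG_{i_sk}$. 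One then reads off that $1=\pi_R(g)$ lands in $Y_{i_1k}\cdots Y_{i_dk}B^+_k$ and contradicts the Bruhat-like decomposition of $G_k$ (Proposition~\ref{prop:Bezout_Steinberg} applied to the residue field $k$, which is available precisely because $k$ is a field). This is the crucial idea your proposal lacks: pass to the residue field, where the Bruhat decomposition is already established, instead of trying to establish a Bruhat-like decomposition over $R$ itself, and replace forward word-length induction by a minimality argument so that only the "big cell" cases survive and the contradiction is immediate.
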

\begin{proof}
Let $g\in \ker\pi_R$. Recall that $G_R$ is generated by $T_R$ and the rank $1$ subgroups $G_i$ ($i\in I$). Let $i_1,\dots,i_d\in I$ be such that $g\in U^{-}_LG_{i_1}\dots G_{i_d}B_R^{+}$ and such that $d\in\NN$ is minimal for this property. Assume for a contradiction that $d\geq 1$. Write $g=u_-g_1\dots g_db_+$ with $g_s\in G_{i_s}$, $u_-\in U^{-}_L$ and $b_+\in B_R^{+}$. Note that $g_s\notin \widetilde{\varphi}_{i_sR}(U_2^-(L)B_2^+(R))$ for any $s=1,\dots,d$ by minimality of $d$, since $G_iU^{-}_L=U^{-}_LG_i$ by Lemma~\ref{lemma:GiULisULGi} and $B^{+}_RG_i=G_iB^{+}_R$ by Lemma~\ref{lemma:BGiisGiB} for any $i\in I$. It follows from Lemma~\ref{lemma:preimageBkinGi} that $\pi_R(g_s)\in \varphi_{ik}(\SL_2(k)-B_2^+(k))$ for all $s=1,\dots,d$. In particular, if we set $Y_{ik}:=\varphi_{ik}(Y_k)$ ($i\in I$) for some set $Y_k$ of coset representatives for $(\SL_2(k)-B^+_2(k))/B^+_2(k)$, then $1=\pi_R(g)\in Y_{i_1k}\dots Y_{i_dk}B^+_k$, contradicting the Bruhat-like decomposition of $G_k$ (see Proposition~\ref{prop:Bezout_Steinberg} applied to $R:=k$).
\end{proof}

\begin{corollary}\label{corollary:oppositionRandk}
Assume that $R$ satisfies (co) in case $A$ is not spherical. Then $$\pi_R\inv(B^-_kB^+_k)= B^{-}_R B^{+}_R.$$
\end{corollary}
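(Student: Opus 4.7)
The plan is to establish the non-trivial inclusion $\pi_R^{-1}(B^-_k B^+_k)\subseteq B^-_R B^+_R$ by lifting the factorisation on the residue field side to a factorisation over $R$, with Proposition~\ref{prop:kerpiR} absorbing the leftover kernel term. The reverse inclusion $B^-_R B^+_R\subseteq \pi_R^{-1}(B^-_k B^+_k)$ is immediate, since $\pi_R(B^{\pm}_R)\subseteq B^{\pm}_k$.

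The key preliminary step will be to verify that $\pi_R$ restricts to a surjection $B^{\pm}_R\twoheadrightarrow B^{\pm}_k$. This should follow from the locality of $R$: the reduction $R\to k$ restricts to a surjection $R^{\times}\twoheadrightarrow k^{\times}$, hence $T_R=\Lambda^{\vee}\otimes R^{\times}\twoheadrightarrow T_k$ is surjective; the map $R\twoheadrightarrow k$ and the identification $U_{\alpha R}=\U_{\alpha}(R)$ give $U_{\alpha R}\twoheadrightarrow U_{\alpha k}$ for every $\alpha\in\Phi$; and since $U^{\pm}_k$ is by definition generated by the root groups $U_{\alpha k}$ ($\alpha\in\Phi_{\pm}$), this lifts to a surjection $U^{\pm}_R\twoheadrightarrow U^{\pm}_k$, and therefore $B^{\pm}_R=T_RU^{\pm}_R\twoheadrightarrow T_kU^{\pm}_k=B^{\pm}_k$.

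Granting this, the argument will conclude as follows. Take $g\in \pi_R^{-1}(B^-_k B^+_k)$ and write $\pi_R(g)=\bar b_-\bar b_+$ with $\bar b_{\pm}\in B^{\pm}_k$. Choosing lifts $b_{\pm}\in B^{\pm}_R$ of $\bar b_{\pm}$, the element $b_-^{-1}gb_+^{-1}$ lies in $\ker\pi_R$, which by Proposition~\ref{prop:kerpiR} is contained in $U^-_L B^+_R\subseteq B^-_R B^+_R$. Writing $b_-^{-1}gb_+^{-1}=c_-c_+$ with $c_{\pm}\in B^{\pm}_R$ and using that $B^-_R$ and $B^+_R$ are subgroups, we obtain $g=(b_-c_-)(c_+b_+)\in B^-_RB^+_R$, as required.

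I do not foresee any real obstacle: the delicate work is concentrated in Proposition~\ref{prop:kerpiR}, which in turn rests on the Bruhat-like decomposition over the residue field $k$ established in Proposition~\ref{prop:Bezout_Steinberg}. Once the kernel of $\pi_R$ is known to lie in $U^-_L B^+_R$, the corollary is a purely formal consequence of that estimate together with the elementary lifting of Borel subgroups over local rings. No further appeal to the $2$-spherical hypothesis or to condition (co) is needed beyond what has already been absorbed into those previous results.
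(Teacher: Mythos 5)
Your proof is correct and follows essentially the same route as the paper's: the paper's one-line argument is $\pi_R^{-1}(B^-_k B^+_k) = B^-_R\,\ker\pi_R\, B^+_R \subseteq B^-_R B^+_R$, which is exactly your lifting-and-absorbing argument (with the surjectivity $B^{\pm}_R\twoheadrightarrow B^{\pm}_k$ left implicit in the paper and spelled out by you). Both rest entirely on Proposition~\ref{prop:kerpiR}.
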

\begin{proof}
Proposition~\ref{prop:kerpiR} implies that $\pi_R\inv(B^-_kB^{+}_k)= B^{-}_R \ker\pi_R B^{+}_R\subseteq B^{-}_R B^{+}_R$. 
\end{proof}

\subsection{Injectivity theorem for Chevalley groups}\label{subsection:injectivityTFCG}

Proposition~\ref{prop:kerpiR} allows to recover the fact that $\varphi_R\co G_R\to G_R^{\min}$ is an isomorphism when $A$ is spherical and $R$ is a local domain; since this result, which we will need, is only stated for $\DDD=\DDD_A^{\sico}$ in \cite{Ste73}, we provide here an alternative proof for the benefit of the reader (c.f. Remark~\ref{remark:GE2rings}(3)).

\begin{theorem}\label{theorem:spherical_injectivity}
Let $R$ be a local domain with field of fractions $\KK$. Assume that $A$ is spherical. Then the map $\varphi_R\co G_R\to G_{\KK}$ is injective.
\end{theorem}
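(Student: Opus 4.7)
The plan is to combine Proposition~\ref{prop:kerpiR} with the Birkhoff uniqueness statement in $G_\KK$ (Remark~\ref{remark:Bruhat_and_Levidec_over_fields}(2)), and then to conclude via the ``big cell'' product decomposition of the unipotent subgroups $U^\pm_R$ that is available in the spherical case. Note that $\varphi_R$ is identified with the functorial map $G_R\to G_\KK$ induced by $R\hookrightarrow\KK$ via Remark~\ref{remark:GE2rings}(2).

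Concretely, let $g\in\ker\varphi_R$. Since $\pi_R$ is the composition of $\varphi_R$ with $G^{\min}_R\to G^{\min}_k=G_k$, we have $g\in\ker\pi_R$. Proposition~\ref{prop:kerpiR}, which applies without the condition (co) because $A$ is spherical, combined with $B^+_R=T_RU^+_R$, then yields a decomposition $g=u_- t\, u_+$ with $u_-\in U^-_L\subseteq U^-_R$, $t\in T_R$, and $u_+\in U^+_R$. Applying $\varphi_R$ inside $G_\KK$ we obtain $\varphi_R(u_-)\cdot t\cdot \varphi_R(u_+)=1$ with $\varphi_R(u_\pm)\in U^\pm_\KK$ and $t\in T_R\subseteq N_\KK$, so the uniqueness part of Remark~\ref{remark:Bruhat_and_Levidec_over_fields}(2) forces $\varphi_R(u_-)=t=\varphi_R(u_+)=1$. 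This reduces the problem to proving that the restrictions $\varphi_R|_{U^\pm_R}$ are injective.

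For the latter, I would appeal to the fact that, in the spherical case, for any fixed ordering $\alpha_1,\ldots,\alpha_N$ of $\Phi_+$, the multiplication map
$$\prod_{k=1}^N U_{\alpha_k R}\longrightarrow U^+_R,\qquad (x_{\alpha_1}(r_1),\ldots,x_{\alpha_N}(r_N))\longmapsto x_{\alpha_1}(r_1)\cdots x_{\alpha_N}(r_N),$$
is a bijection, and similarly over $\KK$ and for $U^-_R$. This classical feature of spherical Chevalley groups follows from the Steinberg commutation relations (R0) together with an adjoint-type faithfulness argument, and does not depend on the choice of Kac--Moody root datum $\DDD$: among the defining relations of $G_R=\G_\DDD(R)$, only (R0) produces identities among elements of positive root groups alone, since (R1)--(R4) either involve the torus $T_R$ or some $\widetilde{s}_i$, or amount to conjugating a root group into another root group. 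Combining this product decomposition with the injectivity of $R\hookrightarrow\KK$ on each factor $U_{\alpha R}\hookrightarrow U_{\alpha\KK}$ and the uniqueness of the analogous factorisation over $\KK$, we conclude $u_\pm=1$ and hence $g=1$. The argument for $U^-_R$ can also be obtained by applying the Cartan--Chevalley involution $\omega_R$.

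The main obstacle in this plan is precisely the transfer of the big-cell decomposition of $U^\pm_R$ from the simply connected root datum $\DDD_A^{\sico}$, where it underlies \cite{Ste73}, to an arbitrary Kac--Moody root datum $\DDD$ with the same GCM $A$. The observation that none of the relations (R1)--(R4) creates a nontrivial identity among positive root group elements on their own—so that the product decomposition is inherited from the Steinberg functor $\Stt_A$, which depends only on $A$—should handle this, but requires careful bookkeeping of the defining relations in $\G_\DDD(R)$.
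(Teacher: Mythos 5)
Your proposal takes essentially the same route as the paper: reduce to $\ker\pi_R$, apply Proposition~\ref{prop:kerpiR} to write $g=u_-tu_+$, invoke the Birkhoff uniqueness in $G_\KK$, and conclude from the injectivity of $\varphi_R$ on $U^\pm_R$ and $T_R$. The only difference is that for the last step the paper simply cites \cite[Exercise~7.62]{KMGbook}, while you sketch an argument via the unique-product decomposition of $U^\pm_R$; your sketch is in the right spirit, though, as you yourself note, it would need the careful bookkeeping you allude to in order to be fully rigorous.
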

\begin{proof}
Let $g\in\ker\varphi_R$. By Remark~\ref{remark:GE2rings}(2), $g$ is then in the kernel of $G_R\to G_R^{\min}$, and hence in the kernel of $\pi_R\co G_R\to G_k$ (which is the composition of $G_R\to G_R^{\min}$ with $G_R^{\min}\to G_k^{\min}=G_k$), where $k$ is the residue field of $R$. Proposition~\ref{prop:kerpiR} then implies that $g\in U_R^-T_RU_R^+$, say $g=u_-tu_+$ with $u_{\pm}\in U_R^{\pm}$ and $t\in T_R$. Hence $1=\varphi_R(g)=\varphi_R(u_-)\varphi_R(t)\varphi_R(u_+)$, with $\varphi_R(u_{\pm})\in U^{\pm}_{\KK}$ and $\varphi_R(t)\in T_{\KK}$. The Birkhoff decomposition in $G_{\KK}$ (see Remark~\ref{remark:Bruhat_and_Levidec_over_fields}(2)) then implies that $u_{\pm},t\in\ker\varphi_R$. As $\varphi_R$ is injective on $U^{\pm}_R$ (see e.g. \cite[Exercise~7.62]{KMGbook}) and on $T_R$, it follows that $g=1$, as desired.
\end{proof}

\begin{remark}\label{remark:presentation_Chevalley_groups}
Let $R$ be an arbitrary ring and assume that $A$ is spherical. Let $\DDD=\DDD_A^{\sico}$ be the simply connected Kac--Moody root datum associated to $A$ (see \S\ref{subsection:KMRDandtori}). Then the presentation of $\G_A(R):=\G_{\DDD}(R)$ from Definition~\ref{definition:constructiveTitsfunctor} can be simplified as follows: $\G_A(R)$ has generators $\{x_{\alpha}(r) \ | \ \alpha\in\Phi, \ r\in R\}$, subject to the following relations, for all $\alpha,\beta\in\Phi$, $i,j\in I$, $a,b\in R$ and $r,s\in R^{\times}$, where we set 
$$\widetilde{s}_i(r):=x_{\alpha_i}(r)x_{-\alpha_i}(r\inv)x_{\alpha_i}(r)\quad  \textrm{and}\quad r^{h_i}:=\widetilde{s}_i(1)\inv\widetilde{s}_i(r\inv):$$
\begin{enumerate}
\item[(U)]
$x_{\alpha}(a)x_{\alpha}(b)=x_{\alpha}(a+b)$,
\item[(C)]
For $\beta\neq\pm\alpha$: 
$$[x_{\alpha}(a),x_{\beta}(b)]=\prod_{\stackrel{\gamma\in ]\alpha,\beta[_{\NN}}{\gamma=i\alpha+j\beta}}x_{\gamma}(C^{\alpha\beta}_{ij}a^ib^j),$$
\item[(T)]
$r^{h_i}s^{h_i}=(rs)^{h_i}$,
\item[(SL2)] 
$\widetilde{s}_i(r) x_{\pm\alpha_i}(a)\widetilde{s}_i(r)\inv=x_{\mp\alpha_i}(ar^{\mp 2})$.
\end{enumerate}
If, moreover, the Dynkin diagram associated to $A$ has no connected component of type $A_1$, then the relations (SL2) can be omitted (see \cite[Theorem~8 on p.66]{St68}). The group defined by the relations (U), (C) and (SL2) is called the \emph{Steinberg group} $\mathrm{St}_A(R)$, while the extra relations (T) are referred to in the literature as the \emph{Steinberg symbols}. If $\Phi$ is the root system of $A$, the kernel of the canonical map $\mathrm{St}_A(R)\to \CDem_{\Phi}(R)$ is denoted $K_2(\Phi,R)$. In the language of algebraic $K$-theory, the injectivity of this map is then equivalent to $K_2(\Phi,R)$ being generated by Steinberg symbols. 
\end{remark}

\subsection{Comparing \texorpdfstring{$\overline{U}^+_R$ and $G^{\min}_R\cap U^+_{\KK}$}{two versions of Umin+R}}

In order to compare $\overline{U}^+_R$ and $G^{\min}_R\cap U^+_{\KK}$ for $R$ a local domain with field of fractions $\KK$ and to prove Theorem~\ref{thm:UbarRisGminRcapUK} below, we will need to briefly introduce the maximal Kac--Moody group functor $\G^{\pma}_{\DDD}$ from \cite[\S8.7]{KMGbook}. This is a group functor over the category of rings defined as an inductive limit of certain affine schemes $\B(w)$ for $w\in\WW$ (with respect to closed immersions $\B(w)\to\B(w')$ whenever $w\leq w'$ in the Bruhat order on $\WW$), and which is constructed from an affine group scheme $\U^{\ma+}_A$ (already mentioned in the proof of Lemma~\ref{lemma:BGiisGiB}(3), and such that $\U^{\ma+}_A(\KK)$ contains $U^+_{\KK}$ for every field $\KK$ --- see \cite[Proposition~8.117]{KMGbook}), the torus group scheme $\T_{\Lambda}$, and copies $\varphi_i(\SL_2)$ of $\SL_2$, one for each $i\in I$ (see \emph{loc. cit.}). 

For each ring $R$, the group $G^{\min}_R$ introduced in \S\ref{subsection:GminTitsfunctor} is in fact defined as the subgroup of $\G^{\pma}_{\DDD}(R)$ generated by $\T_{\Lambda}(R)$ and $\varphi_{iR}(\SL_2(R))$ for all $i\in I$ (see \cite[Definition~8.126]{KMGbook}). 
For $i\in I$, we can define an affine group scheme $\PP^{\ma+}_i=\B(s_i)$ (see \cite[\S 8.7, p.~260]{KMGbook}) such that $\PP^{\ma+}_i(R)=T_R\cdot\varphi_{iR}(\SL_2(R))\cdot\U^{\ma+}_A(R)\subseteq\G_{\DDD}^{\pma}(R)$ for each ring $R$ (in particular, $P^+_{i\KK}\subseteq \PP^{\ma+}_i(\KK)$ for each field $\KK$).

In Lemmas~\ref{lemma:RKGpma} and \ref{lemma:UminRvsUK} below, $A$ is an arbitrary GCM and $R$ an arbitrary ring.

\begin{lemma}\label{lemma:RKGpma}
Assume that $R$ is a domain, with field of fractions $\KK$. Let $i\in I$. Then $$\G^{\pma}_{\DDD}(R)\cap\U_A^{\ma+}(\KK)=\U_A^{\ma+}(R)\quad\textrm{and}\quad \G^{\pma}_{\DDD}(R)\cap\PP_i^{\ma+}(\KK)=\PP_i^{\ma+}(R).$$
\end{lemma}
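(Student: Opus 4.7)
The plan is to exploit the ind-scheme structure of $\G^{\pma}_{\DDD}$ and a standard lemma on extension of morphisms from the generic fiber. Concretely, recall that $\G^{\pma}_{\DDD}$ is constructed as the filtered colimit $\varinjlim_{w\in\WW}\B(w)$, where each $\B(w)$ is an affine scheme and the transition maps $\B(w)\hookrightarrow\B(w')$ (for $w\leq w'$) are closed immersions. Moreover, both $\U^{\ma+}_A$ and $\PP^{\ma+}_i=\B(s_i)$ embed as closed subschemes into $\B(w)$ for all $w$ sufficiently large in the Bruhat order (for $\U^{\ma+}_A$ this is built into the construction of the cells $\B(w)$ in \cite[\S8.7]{KMGbook}; for $\PP^{\ma+}_i=\B(s_i)$ it holds as soon as $s_i\leq w$). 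Consequently, for any ring $R$, the natural map $\U^{\ma+}_A(R)\to\G^{\pma}_{\DDD}(R)$ is injective, and likewise for $\PP^{\ma+}_i$, so the intersections appearing in the statement make sense unambiguously.

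With this setup, the inclusions $\U^{\ma+}_A(R)\subseteq \G^{\pma}_{\DDD}(R)\cap\U^{\ma+}_A(\KK)$ and $\PP^{\ma+}_i(R)\subseteq \G^{\pma}_{\DDD}(R)\cap\PP^{\ma+}_i(\KK)$ follow from functoriality applied to the injection $R\hookrightarrow\KK$ (which is where the domain hypothesis enters on the easy side). The substance is in the reverse inclusions, and I would handle both of them by the same argument, so let me just describe it for $\U^{\ma+}_A$. Let $g\in\G^{\pma}_{\DDD}(R)$, viewed as a morphism $g\co\mathrm{Spec}\,R\to\G^{\pma}_{\DDD}$. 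Since $\mathrm{Spec}\,R$ is quasi-compact, $g$ factors through some affine piece $\B(w)\to\G^{\pma}_{\DDD}$, and we may enlarge $w$ so that $\U^{\ma+}_A$ (resp.\ $\B(s_i)$) is a closed subscheme of $\B(w)$, say cut out by an ideal $I\subseteq B_w$ where $\B(w)=\mathrm{Spec}\,B_w$. The morphism $g$ corresponds to a ring map $\phi\co B_w\to R$.

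The hypothesis $g\in\U^{\ma+}_A(\KK)$ means that the base-changed morphism $g_{\KK}\co\mathrm{Spec}\,\KK\to\B(w)$ factors through the closed subscheme $V(I)$, i.e.\ the composition $B_w\xrightarrow{\phi}R\hookrightarrow\KK$ vanishes on $I$. Because $R$ is a domain, the canonical map $R\hookrightarrow\KK$ is injective, so $\phi(I)\subseteq\ker(R\to\KK)=0$; hence $\phi$ itself kills $I$, $g$ factors through $\U^{\ma+}_A\hookrightarrow\G^{\pma}_{\DDD}$, and $g\in\U^{\ma+}_A(R)$. The same argument with $I$ replaced by the defining ideal of $\B(s_i)\hookrightarrow\B(w)$ yields $\G^{\pma}_{\DDD}(R)\cap\PP^{\ma+}_i(\KK)\subseteq\PP^{\ma+}_i(R)$.

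The only real obstacle is bookkeeping: one must verify that the construction of $\G^{\pma}_{\DDD}$ in \cite[\S8.7]{KMGbook} really exhibits $\U^{\ma+}_A$ and $\PP^{\ma+}_i$ as closed subschemes of each sufficiently large cell $\B(w)$, so that the argument above applies. Once that is granted, the proof is a one-line application of the elementary fact that, for an affine scheme $Y=\mathrm{Spec}\,B$, a closed subscheme $V(I)\subseteq Y$, and a domain $R$ with fraction field $\KK$, a morphism $\mathrm{Spec}\,R\to Y$ whose generic fiber lands in $V(I)$ already lands in $V(I)$.
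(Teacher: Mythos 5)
Your proof is correct and follows essentially the same route as the paper: both exploit the ind-scheme structure of $\G^{\pma}_{\DDD}$, factor the $R$-point through a cell $\B(w)$ in which $\X\in\{\U^{\ma+}_A,\PP^{\ma+}_i\}$ is a closed subscheme, and use the domain hypothesis to descend. The only cosmetic difference is that you phrase the key step as ``$\phi$ kills the defining ideal $I$ because $R\hookrightarrow\KK$ is injective,'' whereas the paper phrases it via the surjectivity of the presenting map $A_1\twoheadrightarrow A_2$ to conclude $g\in\Hom(A_2,R)$; these are the same algebra-vs-geometry formulations of one argument.
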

\begin{proof}
Let $k\in\{R,\KK\}$ and let $\X$ be one of the affine group schemes $\U_A^{\ma+}$ or $\PP_i^{\ma+}$. By construction, $\G^{\pma}_{\DDD}(k)$ is an inductive limit of subsets $\B(w)(k)$ with $w\in\WW$, where each $\B(w)$ is an affine scheme and the natural inclusion $\X(k)\to \B(w)(k)$ (assuming $s_i\leq w$ in the Bruhat order if $\X=\PP_i^{\ma+}$) comes from a closed immersion $\X\to \B(w)$ (see \cite[Definition~8.115]{KMGbook}). 
Let $g\in \G^{\pma}_{\DDD}(R)\cap\X(\KK)$, and let $w\in\WW$ such that $g\in \B(w)(R)$, which we may choose so that $s_i\leq w$. Let $A_1,A_2$ be the $\ZZ$-algebras representing $\B(w)$ and $\X$ respectively, and let $\varphi\co A_1\to A_2$ be the surjective algebra morphism such that the inclusion $\X(k)\to \B(w)(k)$ is given by $$\X(k)\approx \Hom(A_2,k)\to \B(w)(k)\approx \Hom(A_1,k):f\mapsto f\circ\varphi.$$ By assumption, $g\in \X(\KK)\approx \Hom(A_2,\KK)$ is such that $g\circ\varphi\in \Hom(A_1,R)\approx \B(w)(R)\subseteq\B(w)(\KK)$. Hence $g\in \Hom(A_2,R)\approx \X(R)$, as desired. 
\end{proof}

\begin{lemma}\label{lemma:UminRvsUK}
Assume that $R$ is a domain, with field of fractions $\KK$. Let $i\in I$. Then:
\begin{enumerate}
\item
$G^{\min}_R\cap U^+_{\KK}\subseteq G^{\min}_R\cap \U^{\ma+}_A(R)$.
\item
$G^{\min}_R\cap P^+_{i\KK}\subseteq G^{\min}_R\cap \PP^{\ma+}_i(R)$.
\end{enumerate}
Moreover, if $R=\KK$ is a field, then the above inclusions are equalities.
\end{lemma}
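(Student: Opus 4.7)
The two inclusions (1) and (2) should follow almost immediately from Lemma~\ref{lemma:RKGpma}. Indeed, since $G^{\min}_R\subseteq\G^{\pma}_{\DDD}(R)$ by construction (see \cite[Definition~8.126]{KMGbook}) and $U^+_{\KK}\subseteq\U^{\ma+}_A(\KK)$, $P^+_{i\KK}\subseteq\PP^{\ma+}_i(\KK)$, any $g$ in the left-hand side of (1) (resp. (2)) lies in $\G^{\pma}_{\DDD}(R)\cap \U^{\ma+}_A(\KK)=\U^{\ma+}_A(R)$ (resp. $\G^{\pma}_{\DDD}(R)\cap\PP^{\ma+}_i(\KK)=\PP^{\ma+}_i(R)$) by Lemma~\ref{lemma:RKGpma}.

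For the moreover part (the case $R=\KK$), the reverse inclusions $G_{\KK}\cap\U^{\ma+}_A(\KK)\subseteq U^+_{\KK}$ and $G_{\KK}\cap\PP^{\ma+}_i(\KK)\subseteq P^+_{i\KK}$ require comparing the Bruhat decomposition of $G_{\KK}$ with the Bruhat-type decomposition of $\G^{\pma}_{\DDD}(\KK)$ built from the schemes $\B(w)$ mentioned just before the lemma. The plan is to invoke from \cite{KMGbook} the decomposition
\[
\G^{\pma}_{\DDD}(\KK)=\coprod_{w\in\WW}\B^+(\KK)\widetilde{w}\B^+(\KK),\qquad \B^+(\KK):=T_{\KK}\U^{\ma+}_A(\KK),
\]
together with the fact that the intersection of a $\B^+(\KK)$-double coset with $G_{\KK}$ recovers the corresponding $B^+_{\KK}$-double coset (so that the two Bruhat decompositions are compatible via $G_{\KK}\hookrightarrow\G^{\pma}_{\DDD}(\KK)$).

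Granting this, for (1) one argues: if $g\in G_{\KK}\cap\U^{\ma+}_A(\KK)$, then $g\in\B^+(\KK)$, so $g\in G_{\KK}\cap B^+_{\KK}\widetilde{1}B^+_{\KK}=B^+_{\KK}$. Writing $g=tu$ with $t\in T_{\KK}$ and $u\in U^+_{\KK}\subseteq\U^{\ma+}_A(\KK)$, one gets $t=gu\inv\in\U^{\ma+}_A(\KK)\cap T_{\KK}$. Since $\U^{\ma+}_A$ is pro-unipotent, this intersection is trivial (again a basic fact from \cite[\S 8.7]{KMGbook}), so $t=1$ and $g=u\in U^+_{\KK}$. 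For (2) one uses that $\PP^{\ma+}_i(\KK)=\B^+(\KK)\cup\B^+(\KK)\widetilde{s}_i\B^+(\KK)$ (this is the statement $\PP^{\ma+}_i=\B(s_i)$), so that $g\in G_{\KK}\cap\PP^{\ma+}_i(\KK)$ falls into $B^+_{\KK}\cup B^+_{\KK}\widetilde{s}_iB^+_{\KK}=P^+_{i\KK}$ by the same compatibility.

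The only nontrivial ingredient is the refined Bruhat decomposition of $\G^{\pma}_{\DDD}(\KK)$ and its compatibility with that of $G_{\KK}$, along with the pro-unipotency statement $\U^{\ma+}_A(\KK)\cap T_{\KK}=\{1\}$; I would expect these to be already recorded in \cite[Chapter~8]{KMGbook}, so the whole proof reduces to citing these facts and running the elementary arguments above.
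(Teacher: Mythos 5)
Your treatment of the two inclusions is identical to the paper's: both follow in one line from Lemma~\ref{lemma:RKGpma} together with the containments $G^{\min}_R\subseteq\G^{\pma}_{\DDD}(R)$, $U^+_{\KK}\subseteq\U^{\ma+}_A(\KK)$, $P^+_{i\KK}\subseteq\PP^{\ma+}_i(\KK)$. For the ``moreover'' part, however, you take a genuinely different route. The paper handles equality in~(1) by directly invoking \cite[Corollary~8.76]{KMGbook}, and then reduces~(2) to~(1) by writing $\PP^{\ma+}_i(\KK)=T_{\KK}G_{i\KK}\U^{\ma+}_A(\KK)$ and $P^+_{i\KK}=T_{\KK}G_{i\KK}U^+_{\KK}$: if $g=tg_iu$ with $u\in\U^{\ma+}_A(\KK)$ then $u=g_i\inv t\inv g\in G_{\KK}\cap\U^{\ma+}_A(\KK)=U^+_{\KK}$, so $g\in P^+_{i\KK}$. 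You instead rebuild~(1) from the Bruhat decomposition $\G^{\pma}_{\DDD}(\KK)=\coprod_w\B^+(\KK)\widetilde{w}\B^+(\KK)$ together with $\U^{\ma+}_A(\KK)\cap T_{\KK}=\{1\}$, and handle~(2) via $\PP^{\ma+}_i(\KK)=\B(s_i)(\KK)=\B^+(\KK)\cup\B^+(\KK)\widetilde{s}_i\B^+(\KK)$. Your argument is sound: the ``compatibility'' of the two Bruhat decompositions you mention is in fact automatic once you know both decompositions exist, since $B^+_{\KK}\widetilde{w}B^+_{\KK}\subseteq\B^+(\KK)\widetilde{w}\B^+(\KK)$ and both families partition their respective groups. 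The trade-off is that the paper gets by with a single targeted citation and a short algebraic manipulation, while your version is more self-contained in spirit but pushes the load onto two properties of $\G^{\pma}_{\DDD}$ (its Bruhat decomposition and the pro-unipotence statement $\U^{\ma+}_A(\KK)\cap T_{\KK}=\{1\}$) that you would still need to locate in \cite[Chapter~8]{KMGbook} --- and the cited Corollary~8.76 is presumably itself proved by exactly such considerations, so in effect you are unfolding the citation rather than avoiding it.
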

\begin{proof}
(1) and (2) follow from Lemma~\ref{lemma:RKGpma}, as $G^{\min}_R\subseteq \G^{\pma}_{\DDD}(R)$, $U^+_{\KK}\subseteq \U^{\ma+}_A(\KK)$ and $P^+_{i\KK}\subseteq \PP^{\ma+}_i(\KK)$. Assume now that $R=\KK$ is a field. Then the equality in (1) follows from \cite[Corollary~8.76]{KMGbook}, and this implies the equality in (2) as $\PP^{\ma+}_i(\KK)=T_{\KK}G_{i\KK}\U^{\ma+}_A(\KK)$ and $P^+_{i\KK}=T_{\KK}G_{i\KK}U^+_{\KK}$.
\end{proof}

\begin{theorem}\label{thm:UbarRisGminRcapUK}
Let $R$ be a local domain with field of fractions $\KK$. Assume that $A$ is $2$-spherical and that $R$ satisfies (co) in case $A$ is not spherical. Then:
\begin{enumerate}
\item
$\oP_{iR}^{\pm}=G_R^{\min}\cap P^{\pm}_{i\KK}$ for all $i\in I$.
\item
$T_R\oU^{\pm}_R=G^{\min}_R\cap B^{\pm}_{\KK}$.
\item
$\oU^{\pm}_R=G^{\min}_R\cap U^{\pm}_{\KK}$.
\end{enumerate}
\end{theorem}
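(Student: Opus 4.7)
The plan is to establish (3), (1), (2) in that order; the Cartan--Chevalley involution $\omega_R$ reduces each $-$ statement to the $+$ case, and the inclusions ``$\subseteq$'' are immediate from the definitions.

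For (3), let $g\in G^{\min}_R\cap U^+_\KK$. Applying Lemma~\ref{lemma:UminRvsUK}(1) over $R$ yields $g\in\U^{\ma+}_A(R)$; the functoriality of $\G^{\pma}_\DDD$ identifies the reduction $\opi_R(g)\in G_k=G^{\min}_k$ (for $\opi_R\co G^{\min}_R\to G_k$) with the image of $g$ under $\U^{\ma+}_A(R)\to\U^{\ma+}_A(k)$, and the same lemma applied to $k$ puts $\opi_R(g)$ in $U^+_k$. Since each root-group reduction $U_{\alpha R}\to U_{\alpha k}$ is the surjection $(R,+)\to(k,+)$ and both $\oU^+_R$ and $U^+_k$ are generated by their positive root groups, $\opi_R|_{\oU^+_R}\co\oU^+_R\to U^+_k$ is surjective; pick $v\in\oU^+_R$ with $\opi_R(v)=\opi_R(g)$. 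Then $v\inv g\in\ker\opi_R\cap U^+_\KK$. Since local rings are $\GE$-rings, $\varphi_R\co G_R\to G^{\min}_R$ is surjective, so Proposition~\ref{prop:kerpiR} lifts to $\ker\opi_R\subseteq\varphi_R(U^-_L)\cdot\oB^+_R$. Writing $v\inv g=u_-b_+$ accordingly, $u_-=(v\inv g)b_+\inv\in U^+_\KK B^+_\KK=B^+_\KK$, so (\ref{eqn:ABrownProp8.76}) forces $u_-=1$; hence $v\inv g=b_+\in T_R\oU^+_R\cap U^+_\KK$, and decomposing $b_+=tu$ with $t\in T_R$, $u\in\oU^+_R$ yields $t=b_+u\inv\in B^-_\KK\cap U^+_\KK=\{1\}$ by another application of (\ref{eqn:ABrownProp8.76}). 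Thus $g=vu\in\oU^+_R$.

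For (1), given $g\in G^{\min}_R\cap P^+_{i\KK}$, Lemma~\ref{lemma:UminRvsUK}(2) places $g$ in $\PP^{\ma+}_i(R)=T_R\cdot\oG_{iR}\cdot\U^{\ma+}_A(R)$ (using $\oG_{iR}=\varphi_{iR}(\SL_2(R))$ since $R$ is $\GE$). Writing $g=t\,g_i\,u$, the factor $u=g_i\inv t\inv g$ lies in $G^{\min}_R\cap\U^{\ma+}_A(R)$, and the field-case equality in Lemma~\ref{lemma:UminRvsUK}(1) further places it in $G^{\min}_R\cap U^+_\KK=\oU^+_R$ by~(3); hence $g\in T_R\oG_{iR}\oU^+_R\subseteq\oP^+_{iR}$. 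For (2), let $g\in G^{\min}_R\cap B^+_\KK\subseteq G^{\min}_R\cap P^+_{i\KK}=\oP^+_{iR}$ (using (1) for a fixed $i$). Using Lemma~\ref{lemma:BGiisGiB}(2--3) and $\widetilde{s}_i\in\oG_{iR}$, the subgroups $T_R,\oG_{iR},\varphi_R(U^+_{(i)R})$ normalise each other appropriately so that $\oP^+_{iR}=T_R\cdot\oG_{iR}\cdot\varphi_R(U^+_{(i)R})$; write $g=t\,g_i\,u_{(i)}$. Then $g_i=t\inv g u_{(i)}\inv\in\oG_{iR}\cap B^+_\KK=\oB_i$ by Lemma~\ref{lemma:Ybar_representatives}, and the diagonal/upper-unipotent decomposition of $B^+_2(R)$ yields $\oB_i\subseteq T_R\cdot U_{\alpha_iR}$, so $g_i=t'u_i$ and $g=(tt')u_iu_{(i)}\in T_R\cdot U_{\alpha_iR}\varphi_R(U^+_{(i)R})=T_R\oU^+_R$ via Lemma~\ref{lemma:BGiisGiB}(2).

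The main obstacle is (3): Proposition~\ref{prop:kerpiR} is stated only for $G_R$, so it must be lifted to $G^{\min}_R$ via surjectivity of $\varphi_R$, and then combined with the image control coming from Lemma~\ref{lemma:UminRvsUK} (which alone guarantees $\opi_R(g)\in U^+_k$) and the Birkhoff intersection relations in $G_\KK$. Once (3) is in hand, parts (1) and (2) reduce to straightforward manipulations inside the rank-$1$ Levi factorisations.
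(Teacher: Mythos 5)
Your proof is correct and uses the same two key ingredients as the paper (Proposition~\ref{prop:kerpiR} and Lemma~\ref{lemma:UminRvsUK}), but you organise the argument in the reverse order: you prove (3) directly, then derive (1) and (2) from it, whereas the paper establishes (1) first and then obtains (2) and (3) as short consequences. The paper's proof of (1) reduces $g$ modulo the maximal ideal, applies Proposition~\ref{prop:kerpiR} to write $g=u_-p_+$ with $u_-\in U^-_R$ and $p_+\in P^+_{iR}$, and then pins down $\varphi_R(u_-)$ via the rank-one Birkhoff intersection $U^-_\KK\cap P^+_{i\KK}=U_{-\alpha_i\KK}$ of (\ref{eqn:umcapPip}); your proof of (3) instead uses the surjectivity of $\opi_R|_{\oU^+_R}\colon\oU^+_R\to U^+_k$ (which holds because both groups are generated by their positive root groups and each $U_{\alpha R}\to U_{\alpha k}$ is onto) to reduce to $\ker\opi_R\cap U^+_\KK$, and then closes the argument with the simpler intersection relation (\ref{eqn:ABrownProp8.76}). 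Your route trades the Levi-type identity (\ref{eqn:umcapPip}) for the surjectivity observation; both are about the same length, and your version of (1) and (2) then become genuinely easy corollaries of (3) and the explicit factorisations of $\PP^{\ma+}_i(R)$ and $\oP^+_{iR}$. This is a valid and reasonably natural alternative organisation of the same underlying ideas.
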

\begin{proof}
(1) Let $g\in G_R$ be such that $\varphi_R(g)\in P^+_{i\KK}$, and let us show that $\varphi_R(g)\in \oP^+_{iR}$ (the case $\varphi_R(g)\in P^-_{i\KK}$ follows by applying the Cartan--Chevalley involution, as $\varphi_R\circ\omega_R=\omega_{\KK}\circ\varphi_R$). We have $\varphi_R(g)\in G^{\min}_R\cap P^+_{i\KK}\subseteq G^{\min}_R\cap\PP^{\ma+}_i(R)$ by Lemma~\ref{lemma:UminRvsUK}(2). In particular, if $k$ is the residue field of $R$, the image of $\varphi_R(g)$ under $\PP^{\ma+}_i(R)\to \PP^{\ma+}_i(k)$  belongs to $G^{\min}_k\cap \PP^{\ma+}_i(k)=P^+_{ik}$ (where the last equality follows from Lemma~\ref{lemma:UminRvsUK}(2) applied to $R:=k$). In other words, $\pi_R(g)\in P^+_{ik}$. Hence $g\in\pi_R\inv(P^+_{ik})\subseteq \ker\pi_R \cdot P^+_{iR}\subseteq U^-_RP^+_{iR}$ by Proposition~\ref{prop:kerpiR}, say $g=u_-p_+$ with $u_-\in U^-_R$ and $p_+\in P^+_{iR}$. Thus $\varphi_R(g)=\varphi_R(u_-)\varphi_R(p_+)\in P^+_{i\KK}$, where $\varphi_R(u_-)\in \oU^-_R\subseteq U^-_{\KK}$ and $\varphi_R(p_+)\in \oP^+_{iR}\subseteq P^+_{i\KK}$ (see Remark~\ref{remark:GE2rings}(2)). As $U^-_{\KK}\cap P^+_{i\KK}=U_{-\alpha_i\KK}$ by (\ref{eqn:umcapPip}) in Remark~\ref{remark:Bruhat_and_Levidec_over_fields}(3), we deduce that $\varphi_R(u_-)\in U_{-\alpha_i\KK}\cap\oU^-_R=U_{-\alpha_iR}$ (this last equality follows from \cite[Theorem~8.51(4)]{KMGbook}) and hence $\varphi_R(g)\in U_{-\alpha_iR}\oP^+_{iR}=\oP^+_{iR}$, as desired.

(2) If $g\in G_R$ is such that $\varphi_R(g)\in B^+_{\KK}$, then $\varphi_R(g)\in \oP_{iR}^{+}=\oG_{iR}T_R\oU^+_{R}$ by (1). Since $\oG_{iR}\cap B^+_{\KK}\subseteq T_R\oU^+_R$ by Lemma~\ref{lemma:Ybar_representatives}, we deduce that $\varphi_R(g)\in T_R\oU^+_R$.

(3) This readily follows from (2) and the fact that $T_{\KK}\cap U^+_{\KK}=\{1\}$ (see Remark~\ref{remark:Bruhat_and_Levidec_over_fields}(2)).
\end{proof}

\subsection{Intersections of parabolic subgroups of opposite sign}
Consider the following condition (Bir), which a ring $R$ may or may not satisfy:
\begin{equation}\label{Bir}
\oB^{+}_R\cap \oU^{-}_R=\oB^{-}_R\cap \oU^{+}_R=\{1\}.\tag{Bir}
\end{equation}
For instance, if $R$ is a domain with field of fractions $\KK$, then $R$ satisfies (\ref{Bir}) as in that case $\oB^{\pm}_R\cap \oU^{\mp}_R\subseteq B^{\pm}_{\KK}\cap U^{\mp}_{\KK}=\{1\}$ (see (\ref{eqn:ABrownProp8.76}) in Remark~\ref{remark:Bruhat_and_Levidec_over_fields}).

We again assume that $A$ is $2$-spherical, and consider a local ring $R$ with residue field $k$. We moreover assume that $R$ satisfies (co) in case $A$ is not spherical.

\begin{lemma}\label{lemma:GJRcappiRinvB}
Let  $J\subseteq I$. Then $G_{JR}\cap \pi_R\inv(B^-_kB^+_k)\subseteq T_RU^-_{JR}U^+_{JR}$.
\end{lemma}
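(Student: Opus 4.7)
The plan has two main steps.

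First, using the Birkhoff decomposition over the residue field $k$, I would refine the hypothesis $\pi_R(g)\in B^-_kB^+_k$ to $\pi_R(g)\in B^-_{Jk}B^+_{Jk}$ (where $B^\pm_{Jk}:=T_kU^\pm_{Jk}$). Since $g\in G_{JR}$, we have $\pi_R(g)\in G_{Jk}\subseteq T_kG_{Jk}$, and the sub-GCM $A_J$ is itself $2$-spherical, so over the field $k$ the subgroup $T_kG_{Jk}$ admits its own Birkhoff decomposition $T_kG_{Jk}=\coprod_{w\in\WW(A_J)}U^-_{Jk}\widetilde{w}T_kU^+_{Jk}$, each cell of which sits inside the corresponding Birkhoff cell of $G_k$. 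Since $\pi_R(g)\in B^-_kB^+_k=U^-_kT_kU^+_k$ lies in the identity Birkhoff cell of $G_k$, uniqueness of the Birkhoff decomposition in $G_k$ (Remark~\ref{remark:Bruhat_and_Levidec_over_fields}(2)) forces $\pi_R(g)$ to lie in the identity cell $U^-_{Jk}T_kU^+_{Jk}=B^-_{Jk}B^+_{Jk}$.

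Next, I would prove the following $J$-analog of Corollary~\ref{corollary:oppositionRandk}:
\[
T_RG_{JR}\cap \pi_R^{-1}(B^-_{Jk}B^+_{Jk})\subseteq T_RU^-_{JR}U^+_{JR},
\]
which together with the first step gives the lemma (as $G_{JR}\subseteq T_RG_{JR}$). To establish this $J$-analog, I would rerun the proofs of Proposition~\ref{prop:kerpiR} and Corollary~\ref{corollary:oppositionRandk} with $A$ replaced by $A_J$ and $G_R$ by $T_RG_{JR}=T_R\langle G_{iR}:i\in J\rangle$. The auxiliary Lemmas~\ref{lemma:BGiisGiB}, \ref{lemma:GiULisULGi} and \ref{lemma:preimageBkinGi} apply unchanged for every $i\in J$; the Bruhat-like decomposition of Proposition~\ref{prop:Bezout_Steinberg} applies to $G_{Jk}$ because $k$ is a field, hence a Bezout domain; and the condition (co) for $A$ transfers to $A_J$ since $A_J$ is a principal submatrix. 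The minimality argument of Proposition~\ref{prop:kerpiR} then yields
\[
\ker\pi_R\cap T_RG_{JR}\subseteq (U^-_{JR}\cap\ker\pi_R)\cdot B^+_{JR}
\]
(with $B^+_{JR}:=T_RU^+_{JR}$), from which the $J$-analog of Corollary~\ref{corollary:oppositionRandk} follows formally by the same computation as in the proof of that corollary, using the surjectivity of $\pi_R$ on each $T_R$ and $U_{\alpha R}$ for $\alpha\in\Phi(J)$ (and hence on $B^\pm_{JR}$).

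The main obstacle is really just the bookkeeping involved in specialising the results of Section~\ref{section:opposition_local_Bezout_domains} to the subgroup $T_RG_{JR}$ of $G_R$ and to the sub-GCM $A_J$; no new technical ingredient is required beyond what has already been built, and once the specialisation is made explicit the argument is purely formal.
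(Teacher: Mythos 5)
Your proposal is correct and takes essentially the same approach as the paper. The paper makes your ``rerun the proofs'' step formal by introducing the Kac--Moody root datum $\DDD(J)=(J,A_J,\Lambda,(c_i)_{i\in J},(h_i)_{i\in J})$, noting that $T_RG_{JR}=\G_{\DDD(J)}(R)$, and then directly invoking Corollary~\ref{corollary:oppositionRandk} with $\DDD(J)$ in place of $\DDD$ (which is licit because Section~\ref{section:opposition_local_Bezout_domains} is stated for an arbitrary Kac--Moody root datum with $2$-spherical GCM), after using the Birkhoff decompositions of $G_k$ and $T_kG_{Jk}$ to pass from $B^-_kB^+_k$ to $T_kU^-_{Jk}U^+_{Jk}$, exactly as in your first step.
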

\begin{proof}
Let $g\in G_{JR}\cap \pi_R\inv(B^-_kB^+_k)$. Then $\pi_R(g)\in T_kG_{Jk}\cap B^-_kB^+_k$. Consider the Kac--Moody root datum $\DDD(J):=(J,A_J,\Lambda,(c_i)_{i\in J},(h_i)_{i\in J})$ associated to $A_J:=(a_{ij})_{i,j\in J}$, so that (the image in $G_R$ of)  $\G_{\DDD(J)}(R)$ coincides with $T_RG_{JR}$ and $\G_{\DDD(J)}(k)=T_kG_{Jk}$ (see \cite[p.150]{KMGbook}). 
The Birkhoff decompositions $$T_kG_{Jk}=\G_{\DDD(J)}(k)=\coprod_{w\in \WW_J}U^-_{Jk}\widetilde{w}T_kU^+_{Jk}\quad\textrm{and}\quad G_k=\coprod_{w\in\WW}U^-_k\widetilde{w}T_kU^+_k$$
where $\WW_J:=\langle J\rangle\leq \WW$ (see Remark~\ref{remark:Bruhat_and_Levidec_over_fields}(2)) imply that 
$$\pi_R(g)\in T_kG_{Jk}\cap B^-_kB^+_k=T_kU^-_{Jk}U^+_{Jk}.$$
Corollary~\ref{corollary:oppositionRandk} applied to $\G_{\DDD(J)}(R)$ then implies that $g\in  T_RU^-_{JR}U^+_{JR}$, as desired.
\end{proof}

\begin{lemma}\label{lemma:GicapB-B+}
Let $i\in I$. Then $\oG_{iR}\cap \oB^{-}_R\oB^{+}_R\subseteq U_{-\alpha_iR}U_{\alpha_iR}T_R$.
\end{lemma}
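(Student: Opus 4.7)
The plan is to derive this lemma as an immediate consequence of Lemma~\ref{lemma:GJRcappiRinvB} applied to the singleton $J=\{i\}$, by transferring the statement from $G^{\min}_R$ back up to $G_R$ via the map $\varphi_R$ and interpreting $\oB^\pm_R$ through its projection to $G_k$.

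First, I would take any $g\in \oG_{iR}\cap\oB^-_R\oB^+_R$ and choose a lift $g'\in G_{iR}$ with $\varphi_R(g')=g$ (such a lift exists by the very definition $\oG_{iR}=\varphi_R(G_{iR})$). Under the canonical map $\bar\varphi\co G^{\min}_R\to G^{\min}_k=G_k$, the subgroup $\oB^{\pm}_R=T_R\oU^{\pm}_R$ is sent into $T_kU^{\pm}_k=B^{\pm}_k$, since $T_R\to T_k$ and the root groups $U_{\alpha R}\subseteq G^{\min}_R$ map onto $U_{\alpha k}$ for each $\alpha\in\Phi$. Composing with $\varphi_R$ gives $\pi_R(g')=\bar\varphi(g)\in B^-_kB^+_k$, hence $g'\in G_{iR}\cap \pi_R\inv(B^-_kB^+_k)$.

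Next I would apply Lemma~\ref{lemma:GJRcappiRinvB} with $J=\{i\}$, noting that $U^{\pm}_{\{i\}R}=U_{\pm\alpha_iR}$, to conclude that $g'\in T_RU_{-\alpha_iR}U_{\alpha_iR}$. Applying $\varphi_R$ (which is the identity on $T_R$ and identifies each root group in $G_R$ with the corresponding root group in $G^{\min}_R$) yields
\[
g=\varphi_R(g')\in T_RU_{-\alpha_iR}U_{\alpha_iR}.
\]

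Finally, since $T_R$ normalises each root group $U_{\pm\alpha_iR}$ by Remark~\ref{remark:Tsnormalises}(3), we have
\[
T_RU_{-\alpha_iR}U_{\alpha_iR}=U_{-\alpha_iR}T_RU_{\alpha_iR}=U_{-\alpha_iR}U_{\alpha_iR}T_R,
\]
which gives the required inclusion. I do not anticipate any genuine obstacle here: the content of the lemma is packaged in Lemma~\ref{lemma:GJRcappiRinvB}, and the only thing to be careful about is the passage between $G_R$, $G^{\min}_R$ and $G_k$, together with the trivial rearrangement via $T$-normalisation.
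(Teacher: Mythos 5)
Your proof is correct and follows essentially the same route as the paper's: the paper's proof also lifts to $G_{iR}$, observes that $\pi_R(g)\in B^-_kB^+_k$, and invokes Lemma~\ref{lemma:GJRcappiRinvB} with $J=\{i\}$. You simply spell out the two steps the paper leaves implicit — that $\oB^{\pm}_R$ maps into $B^{\pm}_k$ under $G^{\min}_R\to G_k$, and the final rearrangement $T_RU_{-\alpha_iR}U_{\alpha_iR}=U_{-\alpha_iR}U_{\alpha_iR}T_R$ via normalisation — both of which are handled correctly.
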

\begin{proof}
If $g\in G_{iR}$ is such that $\varphi_R(g)\in  \oB^{-}_R\oB^{+}_R$, then $\pi_R(g)\in B^-_kB^+_k$ and hence the claim follows from Lemma~\ref{lemma:GJRcappiRinvB} with $J=\{i\}$.
\end{proof}

\begin{prop}\label{prop:intersection_parabolic_opposite_signs}
Let  $J\subseteq I$. Assume that $R$ satisfies (\ref{Bir}). Then $\oP_{JR}^{+}\cap \oP_{JR}^-=T_R\oG_{JR}$.
\end{prop}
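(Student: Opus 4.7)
The plan is as follows. The inclusion $T_R\oG_{JR}\subseteq\oP_{JR}^{+}\cap\oP_{JR}^-$ is immediate, since $T_R\subseteq\oB^{\pm}_R\subseteq\oP^{\pm}_{JR}$ and $\oG_{JR}\subseteq\oP^{\pm}_{JR}$. For the reverse inclusion, I would first upgrade the definition $\oP^{\pm}_{JR}=\langle\oG_{JR},\oB^{\pm}_R\rangle$ into a Levi-type product: by Lemma~\ref{lemma:BGiisGiB}, $G_{iR}B^{\pm}_R=B^{\pm}_RG_{iR}$ for each $i\in J$, and iterating this over the generators of $G_{JR}$ shows that $P^{\pm}_{JR}=G_{JR}B^{\pm}_R$; applying $\varphi_R$ then yields $\oP^{\pm}_{JR}=\oG_{JR}\oB^{\pm}_R$.

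Given $g\in\oP^{+}_{JR}\cap\oP^{-}_{JR}$, I would then write $g=g_+b_+=g_-b_-$ with $g_{\pm}\in\oG_{JR}$ and $b_{\pm}\in\oB^{\pm}_R$, so that
\[
b_-b_+^{-1}=g_-^{-1}g_+\in\oG_{JR}\cap\oB^{-}_R\oB^{+}_R.
\]
The key structural input is that this intersection lies in $T_R\oU^{-}_{JR}\oU^{+}_{JR}$. I would deduce this by lifting any such element through $\varphi_R$ to $G_{JR}$, observing that its image under $\pi_R$ lies in $B^{-}_kB^{+}_k$ (since $\oB^{\pm}_R$ maps to $B^{\pm}_k$), and invoking Lemma~\ref{lemma:GJRcappiRinvB}.

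Next I would compare two decompositions of $b_-b_+^{-1}$: writing $b_{\pm}=t_{\pm}u_{\pm}$ with $t_{\pm}\in T_R$ and $u_{\pm}\in\oU^{\pm}_R$ yields $b_-b_+^{-1}=t_0\tilde{u}_-\tilde{u}_+\in T_R\oU^{-}_R\oU^{+}_R$ (using that $T_R$ normalises $\oU^{\pm}_R$), while the previous step gives $b_-b_+^{-1}=tv_-^Jv_+^J\in T_R\oU^{-}_{JR}\oU^{+}_{JR}$. Equating these and isolating the ``$-$'' and ``$+$'' parts produces an identity of the shape
\[
\underbrace{(v_-^J)^{-1}(t_0^{-1}t)^{-1}\tilde{u}_-}_{\in\,\oB^{-}_R}=\underbrace{v_+^J\tilde{u}_+^{-1}}_{\in\,\oU^{+}_R},
\]
which by (Bir) forces both sides to be $1$. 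In particular $\tilde{u}_+=v_+^J\in\oU^{+}_{JR}$, and since $T_R$ normalises $\oU^{+}_{JR}$ (by Remark~\ref{remark:Tsnormalises}(3)), I conclude that $u_+\in\oU^{+}_{JR}\subseteq\oG_{JR}$, whence $g=g_+t_+u_+\in\oG_{JR}\,T_R\,\oG_{JR}=T_R\oG_{JR}$.

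The main obstacle is the second paragraph: the refined intersection $\oG_{JR}\cap\oB^{-}_R\oB^{+}_R\subseteq T_R\oU^{-}_{JR}\oU^{+}_{JR}$ is what really uses the $2$-sphericality of $A$ and the condition (co) (through Corollary~\ref{corollary:oppositionRandk} applied to the sub-root-datum $\DDD(J)$ inside Lemma~\ref{lemma:GJRcappiRinvB}). Once it is in place, collapsing the two decompositions of $b_-b_+^{-1}$ via (Bir) is a routine final step.
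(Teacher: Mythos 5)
Your proof is correct and follows essentially the same route as the paper: both rewrite $\oP_{JR}^{\pm}=\oG_{JR}\oB^{\pm}_R$ via Lemma~\ref{lemma:BGiisGiB}, both funnel the relevant element through Lemma~\ref{lemma:GJRcappiRinvB} to land in $T_R\oU^-_{JR}\oU^+_{JR}$, and both close with condition (\ref{Bir}). The only cosmetic difference is the reduction step: you isolate $b_-b_+^{-1}\in\oG_{JR}\cap\oB^-_R\oB^+_R$ and compare two factorisations of it, whereas the paper reduces to $\oB^-_R\cap\oG_{JR}\oB^+_R\subseteq T_R\oG_{JR}$ and concludes directly from $\oB^+_R\cap\oB^-_R=T_R$; yours is a bit longer but equivalent.
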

\begin{proof}
Note that $\oP_{JR}^{\pm}=\oG_{JR}\oB^{\pm}_R$ by Lemma~\ref{lemma:BGiisGiB}. Thus, it is sufficient to check that $\oB^-_R\cap \oG_{JR}\oB^+_R\subseteq T_R\oG_{JR}$. Let $g\in B^-_R$ such that $\varphi_R(g)\in \oG_{JR}\oB^+_R$, say $\varphi_R(g)=\varphi_R(g_Jb_+)$ for some $g_J\in G_{JR}$ and $b_+\in B^+_R$. Recall that $\pi_R$ is the composition of $\varphi_R$ with $G_R^{\min}\to G_k^{\min}=G_k$. Thus, $\pi_R(g_J)=\pi_R(gb_+\inv)\in B^-_kB^+_k$. Hence $g_J\in T_RU^-_{JR}U^+_{JR}$ by Lemma~\ref{lemma:GJRcappiRinvB}, say $g_J=tu_-u_+$ for some $t\in T_R$ and $u_{\pm}\in U^{\pm}_{JR}$. Therefore, $$\varphi_R(u_-\inv t\inv g)=\varphi_R(u_+b_+)\in \oB^{+}_R\cap \oB^{-}_R=T_R$$ by the condition (\ref{Bir}). Thus $$\varphi_R(g)=\varphi_R(t)\varphi_R(u_-)\varphi_R(u_-\inv t\inv g)\in T_R\varphi_R(u_-)T_R\subseteq T_R\oG_{JR},$$ as desired.
\end{proof}


\section{Twin chamber systems}\label{section:TCSdef}

We now come to the more geometric part of the paper, starting with some preliminaries on chamber systems and simple connectedness, before introducing twin chamber systems and announcing our main result (Theorem~\ref{thm:simplyconnectedtwincs}) about them. We then prove Theorem~\ref{thm:simplyconnectedtwincs} in Section~\ref{section:proofsimpleconnected}, and connect twin chamber systems to Kac--Moody groups over local rings in Section~\ref{section:TCSofGminR}.

\subsection{Chamber systems}

Let $I$ be a set. A \textbf{chamber system} over $I$ is a pair $(\CCC,(\sim_i)_{i\in I})$ where $\CCC$ is a set whose elements are called \textbf{chambers} and where $\sim_i$ is an equivalence relation on $\CCC$ for each $i\in I$. Given $i\in I$ and $c,d\in\CCC$, the chamber $c$ is called \textbf{$i$-adjacent} to $d$ if $c\sim_id$. The chambers $c,d$ are called \textbf{adjacent} if they are $i$-adjacent for some $i\in I$. 

If $(\CCC',(\sim_i)_{i\in I})$ is another chamber system over $I$, then a map $f\co\CCC\to\CCC'$ is called a \textbf{chamber map} if $c\sim_id\implies f(c)\sim_if(d)$ for all $c,d\in\CCC$ and $i\in I$. An \textbf{automorphism} of $(\CCC,(\sim_i)_{i\in I})$ is a bijective chamber map $\CCC\to\CCC$ whose inverse is also a chamber map.

A \textbf{gallery} in $(\CCC,(\sim_i)_{i\in I})$ is a finite sequence $(c_0,c_1,\dots,c_k)$ such that $c_{\mu}\in\CCC$ for all $\mu\in\{0,\dots,k\}$ and such that $c_{\mu-1}$ is adjacent to $c_{\mu}$ for all $\mu=1,\dots,k$. The number $k$ is called the \textbf{length} of the gallery. Given a gallery $G=(c_0,c_1,\dots,c_k)$, we put $\alpha(G)=c_0$ and $\omega(G)=c_k$. If $G$ is a gallery and if $c,d\in\CCC$ are such that $c=\alpha(G)$ and $d=\omega(G)$, we say that $G$ is a \textbf{gallery from $c$ to $d$} or that \textbf{$G$ joins $c$ and $d$}. The chamber system is said to be \textbf{connected} if for any two chambers there exists a gallery joining them. A gallery $G$ is \textbf{closed} if $\alpha(G)=\omega(G)$. A gallery $G=(c_0,c_1,\dots,c_k)$ is \textbf{simple} if $c_{\mu-1}\neq c_{\mu}$ for all $\mu\in\{1,\dots,k\}$.

Let $G=(c_0,c_1,\dots,c_k)$ be a gallery. The \textbf{reduced length} of $G$ is the number
$$k^*=|\{1\leq \mu\leq k \ | \ c_{\mu-1}\neq c_{\mu}\}|.$$
We define the $k^*$-tuple $\lambda(G)=(\lambda_1,\dots,\lambda_{k^*})$ and the \textbf{reduced gallery} $G^*=(c_0^*,\dots,c_{k^*}^*)$ of $G$ as follows: we put $c_0^*=c_0$ and $\lambda_0=0$, and define recursively
$$\lambda_{\mu}=\min\{\nu \ | \ \lambda_{\mu-1}<\nu, \ c_{\nu-1}\neq c_{\nu}\}\quad\textrm{and}\quad c_{\mu}^*=c_{\lambda_{\mu}}\quad\textrm{for $\mu=1,\dots,k^*$}.$$
We call $\lambda(G)$ the \textbf{$\lambda$-tuple of $G$}.

Given a gallery $G=(c_0,c_1,\dots,c_k)$, we denote by $G\inv$ the gallery $(c_k,c_{k-1},\dots,c_0)$, and if $H=(c_0',c_1',\dots,c_l')$ is a gallery such that $\omega(G)=\alpha(H)$, then $GH$ denotes the gallery $(c_0,c_1,\dots,c_k=c_0',c_1',\dots,c_l')$.

Let $J$ be a subset of $I$. A \textbf{$J$-gallery} is a gallery $G=(c_0,c_1,\dots,c_k)$ such that for each $\mu\in\{1,\dots,k\}$ there exists an index $j\in J$ with $c_{\mu-1}\sim_jc_{\mu}$. Given two chambers $c,d$, we say that $c$ is \textbf{$J$-equivalent} to $d$ if there exists a $J$-gallery joining $c$ and $d$ and we write $c\sim_Jd$ in this case. Note that $c,d$ are $i$-adjacent if and only if they are $\{i\}$-equivalent. Given a chamber $c$ and a subset $J$ of $I$, the set $R_J(c):=\{d\in\CCC \ | \ c\sim_Jd\}$ is called the \textbf{$J$-residue} of $c$.

\subsection{Homotopy of galleries and simple connectedness}

In the theory of chamber systems there is the notion of \emph{$m$-homotopy} and \emph{$m$-simple connectedness} for each $m\in\NN$. In this paper we are only concerned with the case $m=2$. Therefore our definitions are always to be understood as a specialisation of the general theory to the case $m=2$.

Let $(\CCC,(\sim_i)_{i\in I})$ be a chamber system over a set $I$. Two galleries $G=(c_0,\dots,c_k)$ and $H=(c_0',\dots,c'_{k'})$ are said to be \textbf{elementary homotopic} if there exist $\mu,\nu,\mu',\nu'$ with $0\leq\mu\leq\nu\leq k$ and $0\leq\mu'\leq\nu'\leq k'$ such that the following holds:
\begin{enumerate}
\item[(H1)] $\mu=\mu'$ and $c_{\eta}=c'_{\eta}$ for all $\eta\in\{0,\dots,\mu\}$.
\item[(H2)] $k-\nu=k'-\nu'$ and $c_{k-\eta}=c'_{k'-\eta}$ for all $\eta\in\{0,\dots,k-\nu\}$.
\item[(H3)] The galleries $(c_{\mu},\dots,c_{\nu})$ and $(c'_{\mu'},\dots,c'_{\nu'})$ are $J$-galleries for some subset $J$ of $I$ with $|J|\leq 2$.
\end{enumerate}

Two galleries $G,H$ are said to be \textbf{homotopic} if there exists a sequence $G=G_0,G_1,\dots,G_l=H$ of galleries such that $G_{\mu-1}$ is elementary homotopic to $G_{\mu}$ for all $\mu=1,\dots,l$.

If two galleries $G,H$ are homotopic, then by definition $\alpha(G)=\alpha(H)$ and $\omega(G)=\omega(H)$. A closed gallery $G$ is said to be \textbf{null-homotopic} if it is homotopic to the gallery $(\alpha(G))$. The chamber system $(\CCC,(\sim_i)_{i\in I})$ is called \textbf{simply connected} if it is connected and if each closed gallery is null-homotopic.

\subsection{Twin chamber systems}

\begin{definition}
An {\bf opposition datum} $\CCC$ over a set $I$ is the collection of a pair $(\CCC_+,(\sim_i)_{i\in I})$, $(\CCC_-,(\sim_i)_{i\in I})$ of chamber systems over $I$, together with a symmetric relation $\op\subseteq (\CCC_+\times\CCC_-)\cup(\CCC_-\times\CCC_+)$, called {\bf opposition}. We call $\CCC$ a {\bf twin chamber system} if it satisfies the following axioms, for each $\epsilon\in\{\pm\}$:
\begin{enumerate}
\item[(TCS1)]
Let $c,d\in\CCC_{\epsilon}$ with $c\sim_i d$ for some $i\in I$, and let $x,y\in\CCC_{-\epsilon}$ with $x\sim_j y$ for some $j\in I$. If $d \op x \op c \op y$, then either $d\op y$ or $j=i$.
\item[(TCS2)]
Let $c,d\in\CCC_{\epsilon}$ with $c\sim_i d$ for some $i\in I$, and let $x\in\CCC_{-\epsilon}$ such that $x \op c$. Then there exists $y\in\CCC_{-\epsilon}$ with $x\sim_i y$ such that $c \op y \op d$.
\item[(TCS3)]
For all $c\in\CCC_{\epsilon}$ and all $J$-residues $R_J\subseteq \CCC_{-\epsilon}$ with $|J|\leq 2$, the sets $c^{\op}:=\{x\in\CCC_{-\epsilon} \ | \ c \op x\}$ and $c^{\op}\cap R_J$ are connected.
\item[(TCS4)]
For all $c\in\CCC_{\epsilon}$, there exists a chamber map $\omega_c\co \CCC_{-\epsilon}\to\CCC_{\epsilon}$ such that $\omega_c(x) \op x$ for all $x\in\CCC_{-\epsilon}$, and such that $\omega_c(c^{\op})=\{c\}$.
\end{enumerate}
We call $\CCC$ {\bf simply connected} if both $(\CCC_+,(\sim_i)_{i\in I})$ and $(\CCC_-,(\sim_i)_{i\in I})$ are simply connected. 

We define the chamber system $\Opp(\CCC):=\{(x,y)\in\CCC_+\times\CCC_- \ | \ x \op y\}$ over $I$ by declaring, for each $i\in I$, that $(x,y)\sim_i (x',y')$ if and only if $x\sim_i x'$ and $y\sim_i y'$. 
\end{definition}

\begin{remark}\label{remark:twin_buildings_are_TCS}
Although we will not need this fact, we note that thick $2$-spherical twin buildings are prototypical examples of simply connected twin chamber systems. More precisely, let $\CCC=(\CCC_+,\CCC_-)$ be a thick twin building, with associated codistance $\delta^*$, in the sense of \cite[Definition~5.133]{BrownAbr}. Thus $\CCC_{\pm}$ is the chamber system of a building $(\Delta^{\pm},\delta^{\pm})$ in the sense of \cite[Definition~5.1.1]{BrownAbr}, and $\CCC$ is an opposition datum, with opposition relation $c\op d \Leftrightarrow \delta^*(c,d)=1$. 

It easily follows from the twin buildings axioms that $\CCC$ satisfies (TCS1) and (TCS2). One also checks that $\CCC$ satisfies (TCS4): for $\epsilon\in\{\pm\}$ and $c\in\CCC_{\epsilon}$, choosing a twin apartment $(\Sigma^{\epsilon},\Sigma^{-\epsilon})$ with $c\in\Sigma^{\epsilon}$, there is for each chamber $x\in \CCC_{-\epsilon}$ a unique chamber $\omega_c(x)\in \Sigma^{\epsilon}$ such that $\delta^*(c,x)=\delta^{\epsilon}(c,\omega_c(x))$, and this yields a map $\omega_c\co\CCC_{-\epsilon}\to\Sigma^{\epsilon}\subseteq\CCC^{\epsilon}$ with the desired properties (see \cite[Corollary~5.141(1)]{BrownAbr}). Assume now that $\CCC$ is $2$-spherical (i.e. every rank $2$ residue of $\CCC_{\pm}$ is a spherical building) and that $\CCC_{\pm}$ does not contain any rank $2$ residue isomorphic to one of the buildings associated to the (twisted) Chevalley groups $B_2(\FF_2)$, $G_2(\FF_2)$, $G_2(\FF_3)$ or $^2F_4(\FF_2)$. Then $\CCC$ also satisfies (TCS3) (see e.g. \cite[Remark~5.212]{BrownAbr}). Finally, $\CCC$ is simply connected by \cite[Theorem~4.3]{Ron89}.
\end{remark}

We will prove in Section~\ref{section:proofsimpleconnected} the following theorem.
\begin{theorem}\label{thm:simplyconnectedtwincs}
Let $\CCC$ be a twin chamber system. If $\CCC$ is simply connected, then so is $\Opp(\CCC)$.
\end{theorem}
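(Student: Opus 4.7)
The plan is to prove connectedness and then simple connectedness of $\Opp(\CCC)$, in each case by reducing to the corresponding property of $\CCC_{\pm}$ and handling the ``vertical'' direction (motion in the second coordinate above a fixed first coordinate) via axioms (TCS2) and (TCS3).

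For connectedness, given $(c_1, x_1), (c_2, x_2) \in \Opp(\CCC)$, I would first use connectedness of $\CCC_+$ (which follows from its simple connectedness) to find a gallery from $c_1$ to $c_2$, and lift it inductively via (TCS2) to a gallery in $\Opp(\CCC)$ ending at $(c_2, y)$ for some $y \in c_2^{\op}$. Then (TCS3) supplies a gallery from $y$ to $x_2$ entirely within $c_2^{\op} \subseteq \CCC_-$, which lifts trivially (first coordinate constant at $c_2$) to a gallery in $\Opp(\CCC)$.

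For simple connectedness, let $\Gamma$ be a closed gallery in $\Opp(\CCC)$ based at $(c_0, x_0)$. The heart of the proof is a lifting principle: any elementary homotopy of the projection $\pi_+(\Gamma)$ in $\CCC_+$ lifts to an elementary homotopy of $\Gamma$ in $\Opp(\CCC)$. Concretely, suppose the $\CCC_+$-homotopy replaces a $J$-subgallery $H$ from $d_\mu$ to $d_\nu$ ($|J| \leq 2$) by another $J$-subgallery $H'$, and let $\widetilde{H}$ be the corresponding $J$-subgallery of $\Gamma$, going from $(d_\mu, y_\mu)$ to $(d_\nu, y_\nu)$. Iterating (TCS2) along $H'$ starting at $(d_\mu, y_\mu)$ produces a $J$-subgallery $\widetilde{H}'$ of $\Opp(\CCC)$ ending at some $(d_\nu, y'_\nu)$. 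Both $y_\nu$ and $y'_\nu$ lie in $d_\nu^{\op} \cap R_J(y_\mu)$, which is connected by (TCS3), and a $J$-gallery joining them inside that set lifts vertically to a $J$-gallery $\widetilde{C}$ in $\Opp(\CCC)$ from $(d_\nu, y'_\nu)$ to $(d_\nu, y_\nu)$. The concatenation $\widetilde{H}' \cdot \widetilde{C}$ is then a $J$-subgallery of $\Opp(\CCC)$ with the same endpoints as $\widetilde{H}$, yielding the desired elementary homotopy. Iterating along a null-homotopy of $\pi_+(\Gamma)$ (which exists by simple connectedness of $\CCC_+$), $\Gamma$ becomes homotopic in $\Opp(\CCC)$ to a closed gallery $\Gamma''$ with constant first coordinate $c_0$, equivalently, to a closed gallery $G''$ in $c_0^{\op} \subseteq \CCC_-$ based at $x_0$. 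A null-homotopy of $G''$ in $\CCC_-$ (which exists by simple connectedness of $\CCC_-$) can then be modified, via (TCS3), so that every intermediate $J$-subgallery is confined to $c_0^{\op} \cap R_J$; each step lifts vertically to $\Opp(\CCC)$ with first coordinate kept at $c_0$, completing the null-homotopy of $\Gamma''$.

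The main obstacle is the lifting principle: the projection $\pi_+ \colon \Opp(\CCC) \to \CCC_+$ is generally \emph{not} a $2$-covering, since the companion $y$ in axiom (TCS2) need not be unique, so a given $J$-subgallery in $\CCC_+$ admits several lifts to $\Opp(\CCC)$ with possibly different endpoints. Axiom (TCS3) is precisely what allows these endpoints to be reconciled through a vertical correction gallery $\widetilde{C}$, and axioms (TCS1) and (TCS4) are expected to come in when checking that the various elementary homotopies can be chained consistently---in particular, (TCS4) provides the apartment-like retractions that control the behaviour of lifts globally, while (TCS1) constrains which opposition configurations can arise between adjacent chambers.
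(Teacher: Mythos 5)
Your overall strategy — reduce to the $\CCC_+$-projection via a lifting principle, then handle the vertical direction using (TCS3) and simple connectedness of $\CCC_-$ — is the right one, and your connectedness argument is fine. But the lifting principle as you state it has a genuine gap, and the gap is exactly where the hard work lies.

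You claim that an elementary homotopy of $\pi_+(\Gamma)$ replacing a $J$-subgallery $H$ from $d_\mu$ to $d_\nu$ by $H'$ lifts to an \emph{elementary} homotopy of $\Gamma$. For this you need the corresponding segment $\widetilde{H}$ of $\Gamma$ to itself be a $J$-gallery in $\Opp(\CCC)$. This does not follow from $H$ being a $J$-gallery in $\CCC_+$: at a step of $\widetilde{H}$ where the $\CCC_+$-coordinate stutters (stays constant) but the $\CCC_-$-coordinate moves, the $\Opp(\CCC)$-index of that step is whatever index the $\CCC_-$-chambers are adjacent by, and there is nothing forcing it to lie in $J$. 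The same problem sinks your assertion that $y_\nu\in R_J(y_\mu)$, which you use to invoke (TCS3): the $\CCC_-$-coordinates of $\Gamma$ over a $J$-segment of $\pi_+(\Gamma)$ need not stay in a $J$-residue. So $\widetilde{H}$ and $\widetilde{H}'\cdot\widetilde{C}$ are not both $J$-galleries, and the elementary homotopy between them simply does not exist.

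What is actually needed — and this is the heart of the paper's proof — is the statement that any \emph{closed} gallery in $\Opp(\CCC)$ whose $\CCC_+$-projection stays inside a single $J$-residue (with $|J|\le 2$) is null-homotopic (Proposition~\ref{prop:mainwithJ} in the paper). Once you have that, lifting an elementary homotopy of $G_+^*$ (the \emph{reduced} gallery, with stuttering removed) produces a new gallery in $\Opp(\CCC)$ which is homotopic, not elementary homotopic, to the old one: the discrepancy lives in a $J$-residue of the $\CCC_+$-projection, so it closes up by the proposition. Proving the proposition is nontrivial. One must decompose $G$ along its reduced $\CCC_+$-gallery, introduce auxiliary chambers in $\CCC_-$ via (TCS2) and (TCS3), and, crucially, one needs a "closing-up" lemma of the following shape: for $c\sim_i d$ in $\CCC_+$ and a gallery $(x_0,\dots,x_k)$ in $c^{\op}$ with $x_0,x_k\in d^{\op}$, there is a gallery in $d^{\op}$ from $x_k$ to $x_0$ so that the resulting closed gallery in $\Opp(\CCC)$ is null-homotopic (Lemma~\ref{lemma:cdnullhomotopic}). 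That lemma is proved by induction on $k$ and is where (TCS1) does real work, constraining which chambers $x_\mu$ can lie outside $d^{\op}$. Your sketch describes (TCS1) and (TCS4) as only "expected to come in" to control consistency globally, but (TCS1) is in fact doing pointed combinatorial work inside this closing-up argument, and (TCS4) is needed already for the purely vertical null-homotopy (your final step, which otherwise works): the chamber map $\omega_{c_0}$ gives an embedding of $\CCC_-$ into $\Opp(\CCC)$ along which null-homotopies of $\CCC_-$ transport, with no appeal to (TCS3) required there.

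In short: connectedness and the final vertical reduction are correct; the middle step (lifting homotopies) is stated too strongly and the obstacle you identify — that $\pi_+$ is not a $2$-covering — is not resolved by the correction gallery $\widetilde{C}$ alone. You need an additional null-homotopy result for closed galleries over a single $J$-residue of $\CCC_+$, and that is a substantial lemma in its own right.
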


\begin{definition}
Let $\CCC$ be an opposition datum over $I$. An {\bf automorphism} of $\CCC$ is an automorphism of both $(\CCC_+,(\sim_i)_{i\in I})$ and $(\CCC_-,(\sim_i)_{i\in I})$ preserving the opposition relation $\op$. We write $\Aut(\CCC)$ for the group of automorphisms of $\CCC$. Note that $\Aut(\CCC)$ also acts on $\Opp(\CCC)$. We say that a group $G$ \textbf{acts transitively on $\CCC$} if there is a group morphism $G\to\Aut(\CCC)$ whose image acts transitively on $\Opp(\CCC)$.
\end{definition}

By the general theory of groups acting transitively on chamber systems (see for instance \cite[Proposition~6.5.2]{Sch95}), Theorem~\ref{thm:simplyconnectedtwincs} has the following corollary.

\begin{corollary}\label{corollary:sctwincs_implies_CTA}
Let $\CCC$ be a simply connected twin chamber system, let $(c_+,c_-)\in\Opp(\CCC)$, and let $G$ be a group acting transitively on $\CCC$. Let $\mathcal J_{2}$ be the set of subsets of $I$ of size at most $2$, and for each $J\in\mathcal J_2$, let $G_J$ denote the set of $g\in G$ stabilising the $J$-residues of $c_+$ and $c_-$. Then $G$ is the amalgamated product of the subgroups $G_J$ where $J$ runs over $\mathcal J_2$.
\end{corollary}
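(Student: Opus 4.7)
The plan is to combine Theorem~\ref{thm:simplyconnectedtwincs} with a standard ``Tits lemma on amalgams'' for chamber systems: if a group $H$ acts chamber-transitively on a connected chamber system $\mathcal D$ over $I$ and $d\in\mathcal D$ is any chamber, then $H$ is the amalgamated product of the stabilisers $\mathrm{Stab}_H(R_J(d))$ for $J\subseteq I$ with $|J|\leq 2$ if and only if $\mathcal D$ is (rank-$2$) simply connected; this is \cite[Proposition~6.5.2]{Sch95}. I would apply this with $\mathcal D=\Opp(\CCC)$ and $d=(c_+,c_-)$. Both hypotheses are readily available: chamber-transitivity of $G$ on $\Opp(\CCC)$ is built into the definition of ``$G$ acts transitively on $\CCC$'', while simple connectedness of $\Opp(\CCC)$ is exactly the content of Theorem~\ref{thm:simplyconnectedtwincs}.

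The remaining work is to match the stabiliser $\mathrm{Stab}_G(R_J((c_+,c_-)))$ produced by this machinery with the subgroup $G_J$ defined in the statement. The inclusion $G_J\subseteq\mathrm{Stab}_G(R_J((c_+,c_-)))$ is immediate: an element $g\in G_J$ stabilises $R_J(c_+)$ and $R_J(c_-)$ componentwise and preserves $\op$, so it sends $J$-galleries in $\Opp(\CCC)$ through $(c_+,c_-)$ to $J$-galleries through $(c_+,c_-)$, and thus preserves the $J$-residue of that chamber. Conversely, if $g$ stabilises that residue, then $(gc_+,gc_-)\sim_J(c_+,c_-)$ in $\Opp(\CCC)$, which entails $gc_\pm\sim_J c_\pm$ in $\CCC_\pm$; since $g$ acts by chamber automorphisms on each of $\CCC_\pm$, it must then preserve the whole residues $R_J(c_\pm)$, so $g\in G_J$.

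I do not expect a genuine obstacle here: the geometric heavy lifting has been entirely outsourced to Theorem~\ref{thm:simplyconnectedtwincs}, and what remains is a direct invocation of the standard amalgam theorem for chamber-transitive actions on simply connected chamber systems. The only point requiring minor care is the compatibility between $J$-residues of $\Opp(\CCC)$ and pairs of $J$-residues in $\CCC_\pm$; axiom (TCS3), which guarantees connectedness of $c^{\op}\cap R_J$ for $|J|\leq 2$, is precisely what ensures this compatibility and makes the rank-$\leq 2$ residues of $\Opp(\CCC)$ behave as the proof requires.
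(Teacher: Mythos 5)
Your proposal is correct and follows the paper's approach exactly: the paper derives the corollary from Theorem~\ref{thm:simplyconnectedtwincs} via the cited amalgam result \cite[Proposition~6.5.2]{Sch95}, applied to the chamber-transitive action of $G$ on the simply connected chamber system $\Opp(\CCC)$, leaving the stabiliser identification implicit. One small imprecision in your forward inclusion $G_J\subseteq\mathrm{Stab}_G(R_J((c_+,c_-)))$: an element $g\in G_J$ sends a $J$-gallery starting at $(c_+,c_-)$ to one starting at $(gc_+,gc_-)$, not at $(c_+,c_-)$, so what one really needs is $(gc_+,gc_-)\in R_J((c_+,c_-))$; this does follow from $gc_{\pm}\in R_J(c_{\pm})$ together with (TCS2) (in the form of Lemma~\ref{lemma:TCS2translated}) and (TCS3), which is the compatibility you correctly flag at the end.
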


Note that Corollary~\ref{corollary:sctwincs_implies_CTA} when $\CCC$ is a thick $2$-spherical twin building with no rank $2$ residue isomorphic to one of the buildings associated to the groups $B_2(\FF_2)$, $G_2(\FF_2)$, $G_2(\FF_3)$ or $^2F_4(\FF_2)$ (cf. Remark~\ref{remark:twin_buildings_are_TCS}) is the main result of \cite{AbrM97}.


\section{Proof of Theorem~\ref{thm:simplyconnectedtwincs}}\label{section:proofsimpleconnected}

Let $\CCC$ be a simply connected twin chamber system over $I$, consisting of the pair $(\CCC_+,(\sim_i)_{i\in I})$, $(\CCC_-,(\sim_i)_{i\in I})$ of chamber systems and of the opposition relation $\op$. 

For $c=(x,y)\in\Opp(\CCC)$, we write $c^+:=x$ and $c^-:=y$. Given a gallery $G=(c_0,c_1,\dots,c_k)$ in $\Opp(\CCC)$ and $\epsilon\in\{\pm\}$, we denote by $G_{\epsilon}$ the gallery $(c_0^{\epsilon},c_1^{\epsilon},\dots,c_k^{\epsilon})$ of $(\CCC_{\epsilon},(\sim_i)_{i\in I})$.

\begin{lemma}\label{lemma:TCS2translated}
Let $J\subseteq I$. Let $c=(c^+,c^-)\in\Opp(\CCC)$, and let $d_1,d_2\in R_J(c^+)$ with $d_1\sim_id_2$ for some $i\in J$. Then there exists $y\in R_J(c^-)$ such that $d_1 \op y \op d_2$.
\end{lemma}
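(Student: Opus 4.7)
The plan is to use (TCS2) iteratively along a $J$-gallery from $c^+$ to $d_1$ in order to produce an auxiliary chamber in $R_J(c^-)$ that is opposite to $d_1$, and then to invoke (TCS2) one more time to handle the adjacency $d_1 \sim_i d_2$.

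More precisely, since $d_1 \in R_J(c^+)$, one can pick a $J$-gallery $c^+ = e_0, e_1, \dots, e_n = d_1$ in $\CCC_+$, with $e_{k-1} \sim_{j_k} e_k$ for some $j_k \in J$ and all $k = 1, \dots, n$. I would build inductively a matching $J$-gallery $c^- = f_0, f_1, \dots, f_n$ in $\CCC_-$ satisfying $f_{k-1} \sim_{j_k} f_k$ and $f_k \op e_k$ for every $k$. The base case $f_0 := c^-$ holds by hypothesis ($c^+ \op c^-$), and the inductive step is a direct application of (TCS2) to the triple $(c,d,x) := (e_{k-1}, e_k, f_{k-1})$, which yields a chamber $f_k \in \CCC_-$ with $f_{k-1} \sim_{j_k} f_k$ and $e_{k-1} \op f_k \op e_k$. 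Setting $y_1 := f_n$, we then obtain $y_1 \in R_J(c^-)$ with $y_1 \op d_1$.

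To conclude, I would apply (TCS2) one last time to the triple $(d_1, d_2, y_1)$ along the adjacency $d_1 \sim_i d_2$, producing some $y \in \CCC_-$ with $y \sim_i y_1$ and $d_1 \op y \op d_2$. Since $i \in J$ and $y_1 \in R_J(c^-)$, it follows that $y \in R_J(c^-)$, which is the desired chamber. I do not anticipate any substantive obstacle: the argument is a routine iteration of (TCS2), and the only point requiring mild care is to check that each adjacency index $j_k$ used in the parallel construction lies in $J$, which is automatic from the definition of a $J$-gallery.
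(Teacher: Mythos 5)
Your proposal is correct and takes essentially the same approach as the paper: iterate (TCS2) along a $J$-gallery to build a parallel gallery in $\mathcal{C}_-$ of opposite chambers. The only cosmetic difference is that the paper runs the iteration along a single $J$-gallery from $c^+$ through $d_1$ to $d_2$, whereas you first iterate to reach $d_1$ and then apply (TCS2) one final time for the step $d_1 \sim_i d_2$; these are the same argument.
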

\begin{proof}
Let $(c^+=c_0,c_1,\dots,c_{k-1}=d_1,c_k=d_2)$ be a $J$-gallery, and let $i_{\mu}\in J$ such that $c_{\mu-1}\sim_{i_{\mu}}c_{\mu}$  for each $\mu=1,\dots,k$ (so that $i_k=i$). Set $y_0:=c^-$, so that $y_0 \op c_0$. Using (TCS2) repeatedly, we can inductively construct a sequence of chambers $y_0,y_1,\dots,y_k$ in $\CCC_-$ such that $y_{\mu-1}\sim_{i_{\mu}}y_{\mu}$ and $c_{\mu-1} \op y_{\mu} \op c_{\mu}$ for all $\mu=1,\dots,k$. We can then set $y:=y_k$.
\end{proof}
 
\begin{lemma}\label{lemma:embeddingC-inOpp}
Let $c_{\pm}\in\CCC_{\pm}$. Then the maps $$\CCC_-\to\Opp(\CCC):x\mapsto (\omega_{c_+}(x),x)\quad\textrm{and}\quad \CCC_+\to\Opp(\CCC):x\mapsto (x,\omega_{c_-}(x))$$ are isomorphisms onto their image.
\end{lemma}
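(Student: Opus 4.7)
The plan is a direct verification using (TCS4) together with the definition of adjacency in $\Opp(\CCC)$; no geometric input (such as the main Theorem~\ref{thm:simplyconnectedtwincs}) is needed here. By the symmetry of the statement it is enough to treat the first map, call it
$$f\co\CCC_-\to\Opp(\CCC):x\mapsto (\omega_{c_+}(x),x).$$
Write $p_-\co\Opp(\CCC)\to \CCC_-: (u,v)\mapsto v$ for the second projection.

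First I would check that $f$ is well-defined: by (TCS4), the chamber map $\omega_{c_+}\co\CCC_-\to\CCC_+$ satisfies $\omega_{c_+}(x)\op x$ for every $x\in \CCC_-$, so indeed $(\omega_{c_+}(x),x)\in\Opp(\CCC)$. Injectivity of $f$ is immediate since $p_-\circ f=\mathrm{id}_{\CCC_-}$, so $f^{-1}$ on the image of $f$ coincides with the restriction of $p_-$.

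Next I would verify that $f$ is a chamber map. If $x\sim_i y$ in $\CCC_-$, then $\omega_{c_+}(x)\sim_i\omega_{c_+}(y)$ because $\omega_{c_+}$ is a chamber map (TCS4); by the definition of adjacency in $\Opp(\CCC)$, this gives $(\omega_{c_+}(x),x)\sim_i(\omega_{c_+}(y),y)$, i.e.\ $f(x)\sim_if(y)$. Finally, the inverse $f^{-1}=p_-|_{f(\CCC_-)}$ is a chamber map: if $(u,v)\sim_i(u',v')$ in $\Opp(\CCC)$, then by definition $v\sim_iv'$ in $\CCC_-$, so $p_-$ is a chamber map on the nose, and hence so is its restriction to $f(\CCC_-)$. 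The second map is handled identically, using (TCS4) for $\omega_{c_-}$ in place of $\omega_{c_+}$ and the first projection in place of $p_-$.

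There is no real obstacle: the lemma is essentially a tautology once one unwinds the definition of $\Opp(\CCC)$ and invokes the existence of the retractions $\omega_{c_\pm}$ provided by (TCS4). The only subtle point worth flagging in the write-up is that the image $f(\CCC_-)$ need not be all of $\Opp(\CCC)$, so ``isomorphism onto its image'' is exactly what is claimed — this will be used later (in Section~\ref{section:proofsimpleconnected}) to transport null-homotopies in $\CCC_\pm$ across to $\Opp(\CCC)$ via $f$ and its sibling.
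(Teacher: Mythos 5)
Your proof is correct and is essentially the same as the paper's, which simply states "This is immediate from (TCS4)"; you have just unwound the verification (well-definedness, injectivity via the projection, chamber-map property of the map and its inverse) that the paper leaves to the reader.
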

\begin{proof}
This is immediate from (TCS4).
\end{proof}

\begin{lemma}
The chamber system $\Opp(\CCC)$ is connected.
\end{lemma}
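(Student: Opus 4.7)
The plan is to split the task of connecting two chambers $c_1 = (x_1,y_1)$ and $c_2 = (x_2,y_2)$ of $\Opp(\CCC)$ into two stages: first produce a gallery in $\Opp(\CCC)$ whose $\CCC_+$-component walks from $x_1$ to $x_2$, then produce one whose $\CCC_+$-component stays fixed at $x_2$ while the $\CCC_-$-component walks to $y_2$.

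For the first stage, I would invoke connectedness of $\CCC_+$ (which holds because $\CCC$ is simply connected, hence $\CCC_+$ is connected) to obtain a gallery $(x_1 = z_0, z_1, \dots, z_k = x_2)$ in $\CCC_+$, with $z_{\mu-1} \sim_{i_\mu} z_\mu$ for each $\mu$. I then inductively construct chambers $y_1 = w_0, w_1, \dots, w_k$ in $\CCC_-$ with $w_{\mu-1} \sim_{i_\mu} w_\mu$ and $z_\mu \op w_\mu$: given $w_{\mu-1}$ with $z_{\mu-1} \op w_{\mu-1}$, axiom (TCS2) applied to $c = z_{\mu-1}$, $d = z_\mu$, $x = w_{\mu-1}$ supplies $w_\mu$ with $w_{\mu-1} \sim_{i_\mu} w_\mu$ and $z_{\mu-1} \op w_\mu \op z_\mu$. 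The sequence $\bigl((z_\mu, w_\mu)\bigr)_{\mu=0}^k$ is then a gallery in $\Opp(\CCC)$ from $c_1$ to $(x_2, w_k)$.

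For the second stage, observe that both $w_k$ and $y_2$ belong to $x_2^{\op}$, which is connected by axiom (TCS3). I choose a gallery $(w_k = u_0, u_1, \dots, u_l = y_2)$ in $\CCC_-$ with all $u_\nu \in x_2^{\op}$. Since $x_2 \sim_i x_2$ for every $i \in I$ and each pair $(x_2, u_\nu)$ lies in $\Opp(\CCC)$, the sequence $\bigl((x_2, u_\nu)\bigr)_{\nu=0}^l$ is a gallery in $\Opp(\CCC)$ joining $(x_2, w_k)$ to $c_2$. Concatenating the two galleries yields the desired path from $c_1$ to $c_2$.

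The argument requires only axioms (TCS2) and (TCS3) together with connectedness of $\CCC_+$; no substantial obstacle is expected, since the construction reduces matters to two elementary lifting problems (one via (TCS2), one using the trivial $i$-adjacency $x_2 \sim_i x_2$) which neatly decouple once one allows the detour through $x_2^{\op}$.
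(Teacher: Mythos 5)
Your proof is correct, but it takes a genuinely different route from the paper's. The paper's argument rests entirely on axiom (TCS4), via Lemma~\ref{lemma:embeddingC-inOpp}: the chamber map $\omega_{c_+}$ embeds $\CCC_-$ into $\Opp(\CCC)$, and $\omega_{d_-}$ embeds $\CCC_+$ into $\Opp(\CCC)$, so connectedness of $\CCC_-$ and $\CCC_+$ immediately supplies the two halves of a gallery from $(c_+,c_-)$ to $(d_+,d_-)$. Your argument instead avoids (TCS4) altogether: stage one lifts a gallery in $\CCC_+$ to $\Opp(\CCC)$ by iterating (TCS2) (this is essentially the inductive construction in the proof of Lemma~\ref{lemma:TCS2translated}, without the $J$-restriction), and stage two uses connectedness of $c^{\op}$ from (TCS3) to bridge the residual gap in the $\CCC_-$-coordinate. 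The paper's version buys brevity by reusing the embedding lemma, and uses connectedness of both $\CCC_+$ and $\CCC_-$; yours is more hands-on, requires connectedness only of $\CCC_+$, and trades (TCS4) for (TCS2)+(TCS3). Both are complete and valid proofs.
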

\begin{proof}
Let $(c_+,c_-),(d_+,d_-)\in\Opp(\CCC)$. By Lemma~\ref{lemma:embeddingC-inOpp}, the map $\CCC_-\to\Opp(\CCC):x\mapsto (\omega_{c_+}(x),x)$ is an isomorphism onto its image. As $\CCC_-$ is connected, there is a gallery in $\Opp(\CCC)$ joining $(c_+,c_-)=(\omega_{c_+}(c_-),c_-)$ and $(d_+':=\omega_{c_+}(d_-),d_-)$. Again by Lemma~\ref{lemma:embeddingC-inOpp}, the map $\CCC_+\to\Opp(\CCC):x\mapsto (x,\omega_{d_-}(x))$ is an isomorphism onto its image. Since $\CCC_+$ is connected, there is a gallery in $\Opp(\CCC)$ joining  $(d_+',d_-)=(d_+',\omega_{d_-}(d_+'))$ and $(d_+,\omega_{d_-}(d_+))=(d_+,d_-)$, as desired.
\end{proof}

\begin{prop}\label{prop:null-homotopicccc}
Let $c\in\CCC_+$, and let $G=(c_0,c_1,\dots,c_k)$ be a closed gallery in $\Opp(\CCC)$ such that $c^+_{\mu}=c$ for all $\mu=0,\dots,k$. Then $G$ is null-homotopic.
\end{prop}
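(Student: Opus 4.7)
My plan starts from the observation that, since $c^+_\mu = c$ for every $\mu$, the closed gallery $G$ is entirely encoded by its negative projection $G_- := (c^-_0, c^-_1, \ldots, c^-_k)$: because $c \sim_i c$ trivially for every $i$, an $i$-adjacency $c_{\mu-1} \sim_i c_\mu$ in $\Opp(\CCC)$ reduces to $c^-_{\mu-1} \sim_i c^-_\mu$ in $\CCC_-$. Thus $G_-$ is a closed gallery in $\CCC_-$ lying entirely in $c^{\op}$, and by simple connectedness of $\CCC$ (which by definition entails simple connectedness of $\CCC_-$), $G_-$ admits a null-homotopy $G_- = H_0, H_1, \ldots, H_l = (c^-_0)$ in $\CCC_-$.

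The main step is then to transport this null-homotopy into $\Opp(\CCC)$ through the chamber map $\iota\co \CCC_- \to \Opp(\CCC)$, $y \mapsto (\omega_c(y), y)$, furnished by axiom (TCS4) and already considered in Lemma~\ref{lemma:embeddingC-inOpp}. The crucial property, which is precisely why (TCS4) requires $\omega_c(c^{\op}) = \{c\}$, is that $\iota$ restricted to $c^{\op}$ is $y \mapsto (c, y)$; in particular, $\iota(G_-) = G$. Since $\iota$ is a chamber map, it sends $J$-galleries to $J$-galleries for every $J$, and therefore preserves elementary homotopies. Applying $\iota$ pointwise to the sequence $(H_\mu)_{\mu}$ then yields
$$G = \iota(H_0), \iota(H_1), \ldots, \iota(H_l) = (\iota(c^-_0)) = (c_0),$$
a chain of elementary homotopies in $\Opp(\CCC)$, i.e.\ a null-homotopy of $G$.

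No serious obstacle arises in this approach: axiom (TCS4) provides exactly the chamber-map lift of $\CCC_-$ into $\Opp(\CCC)$ whose image contains $G$, and functoriality of homotopy under chamber maps does the rest. Note that the intermediate galleries $\iota(H_\mu)$ are generally not expected to have constant positive projection equal to $c$, which is unproblematic and in fact necessary: the null-homotopy of $G$ in $\Opp(\CCC)$ is allowed to leave the "fiber" $\{c\} \times c^{\op}$. Note also that axiom (TCS3) plays no role in this particular reduction; it will presumably be required in the more general argument of Theorem~\ref{thm:simplyconnectedtwincs}, where the positive component of the gallery is not held fixed.
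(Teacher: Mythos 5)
Your proof is correct and takes essentially the same route as the paper: both transport the null-homotopy of $G_-$ in $\CCC_-$ to $\Opp(\CCC)$ via the chamber map $y\mapsto(\omega_c(y),y)$ from (TCS4) (the paper invokes Lemma~\ref{lemma:embeddingC-inOpp} for this map). Your explicit observation that chamber maps preserve elementary homotopies is the justification the paper leaves implicit.
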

\begin{proof}
By Lemma~\ref{lemma:embeddingC-inOpp}, the map $\pi\co\CCC_-\to\Opp(\CCC):x\mapsto (\omega_{c}(x),x)$ is an isomorphism onto its image. By assumption, $G=\pi(G_-)$. As $\CCC_-$ is simply connected, $G_-$ is null-homotopic in $\CCC_-$, and hence $G=\pi(G_-)$ is null-homotopic in $\Opp(\CCC)$.
\end{proof}

\begin{lemma}\label{lemma:stuttering}
Let $G$ be a gallery in $\Opp(\CCC)$. Then $G$ is homotopic to a gallery $H=(c_0,\dots,c_k)$ such that for each $\mu\in\{1,\dots,k\}$ there exists an $\epsilon\in\{\pm\}$ with the property that $c_{\mu-1}^{\epsilon}=c_{\mu}^{\epsilon}$.
\end{lemma}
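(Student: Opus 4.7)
The plan is to process the gallery $G$ one edge at a time, replacing each edge $c_{\mu-1}\sim_i c_\mu$ (with $c_{\mu-1}\neq c_\mu$) by a 3-step $\{i\}$-gallery in $\Opp(\CCC)$ each of whose consecutive chambers agrees in exactly one of the two coordinates. Since replacing an $\{i\}$-gallery by another $\{i\}$-gallery (with $|\{i\}|=1\leq 2$) having the same endpoints is an elementary homotopy in the sense of (H1)--(H3), iterating over $\mu\in\{1,\dots,k\}$ produces the desired gallery $H$ homotopic to $G$.

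Fix an edge $c_{\mu-1}\sim_i c_\mu$ with $c_{\mu-1}=(x,y)$ and $c_\mu=(x',y')$. By definition of adjacency in $\Opp(\CCC)$, we have $x\sim_i x'$ and $y\sim_i y'$, together with $x\op y$ and $x'\op y'$. Apply axiom (TCS2) to $x\sim_i x'$ in $\CCC_+$ and to the chamber $y\in\CCC_-$ opposite $x$: this yields a chamber $y''\in\CCC_-$ with $y\sim_i y''$ and $x\op y''\op x'$. Since $\sim_i$ is an equivalence relation, one also has $y''\sim_i y'$. I now insert $(x,y'')$ and $(x',y'')$ between $c_{\mu-1}$ and $c_\mu$, obtaining the 3-step sequence
\[
(x,y),\ (x,y''),\ (x',y''),\ (x',y').
\]
All four entries lie in $\Opp(\CCC)$ by the choice of $y''$. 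Each of the three consecutive pairs is $i$-adjacent (using reflexivity of $\sim_i$ on the coordinate that is unchanged, and the relations $y\sim_i y''$, $x\sim_i x'$, $y''\sim_i y'$ on the coordinate that changes). Moreover, the first and third steps fix the $+$-coordinate, while the middle step fixes the $-$-coordinate, exactly as required.

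Both the original edge $(c_{\mu-1},c_\mu)$ and its replacement are $\{i\}$-galleries with the same endpoints, so the substitution inside $G$ is an elementary homotopy. Performing it at every edge in turn yields a gallery $H$ homotopic to $G$ in which every step fixes either the $+$-coordinate or the $-$-coordinate. The only real input is (TCS2), which supplies the intermediate chamber $y''$; everything else amounts to unwinding the definitions of adjacency in $\Opp(\CCC)$ and of elementary homotopy. The main conceptual point is simply recognising that (TCS2) is precisely the tool that lets one ``decouple'' an $i$-adjacency in $\Opp(\CCC)$ into a pair of one-sided moves.
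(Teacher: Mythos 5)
Your proposal is correct and follows essentially the same route as the paper: the paper reduces to a length-one gallery by induction, applies (TCS2) to produce the intermediate chamber $y''$ (called $y$ there), and replaces the edge by the same three-step $\{i\}$-gallery $((d_0^+,d_0^-),(d_0^+,y),(d_1^+,y),(d_1^+,d_1^-))$; your iterative ``edge-by-edge'' phrasing is just a cosmetic repackaging of that induction.
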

\begin{proof}
Reasoning inductively on the length of $G$, we may assume that $G=(d_0,d_1)$. Let $i\in I$ such that $d_0^{\pm}\sim_i d_1^{\pm}$. By (TCS2), there exists $y\in\CCC_-$ with $y\sim_id_0^-$ (and hence also $y\sim_id_1^-$) such that $d_0^+ \op y \op d_1^+$. We can then take $H=((d_0^+,d_0^-),(d_0^+,y),(d_1^+,y),(d_1^+,d_1^-))$.
\end{proof}

\begin{lemma}\label{lemma:x0x1x2}
Let $i\in I$, let $c,d\in\CCC_+$ with $c\sim_i d$, and let $(x_0,x_1,x_2)$ be a gallery in $c^{\op}$ such that $x_0 \op d \op x_2$. Then there exists a gallery $(x_2=y_0,y_1,\dots,y_k=x_0)$ in $d^{\op}$ such that the gallery 
$$((c,x_0),(c,x_1),(c,x_2),(d,y_0),\dots,(d,y_k),(c,x_0))$$
is null-homotopic in $\Opp(\CCC)$. 
\end{lemma}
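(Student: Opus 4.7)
The plan is to first address existence of the $y$-gallery and then tackle the null-homotopy claim, which is the substantive part. The gallery $(x_2=y_0,\ldots,y_k=x_0)$ in $d^{\op}$ exists immediately from (TCS3): both $x_0$ and $x_2$ lie in $d^{\op}$ (by hypothesis $x_0\op d$ and $x_2\op d$), and $d^{\op}$ is connected. Moreover, different choices for this gallery yield galleries $G$ that differ only in a sub-gallery lying entirely in $\{d\}\times d^{\op}$, and any two such sub-galleries with common endpoints can be joined into a closed $d$-constant gallery, hence are homotopic via Proposition~\ref{prop:null-homotopicccc}. So it suffices to prove null-homotopy for one convenient choice of the $y_\mu$.

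The convenient choice is obtained by applying (TCS2) to the chambers $c\sim_i d$ and $x:=x_1\in c^{\op}$: this yields a chamber $z\in\CCC_-$ with $z\sim_i x_1$ and $z\in c^{\op}\cap d^{\op}$. Using (TCS3) twice inside $d^{\op}$, I would pick the $y$-gallery to pass through $z$, writing it as $(x_2=y_0,\ldots,y_m=z,\ldots,y_k=x_0)$. Now insert the closed $\{i\}$-loop $((d,z),(c,x_1),(d,z))$ at the position $(d,z=y_m)$; since the trivial gallery $((d,z))$ and this loop are both $\{i\}$-galleries with the same endpoints, this is a legal elementary $2$-homotopy. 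The resulting gallery passes through $(c,x_1)$ twice and decomposes (up to conjugation by the trivial segment $(c,x_0),(c,x_1)$, which is itself a $\{j_1\}$-gallery) into two closed sub-galleries:
\[H_2=((c,x_1),(c,x_2),(d,x_2),(d,y_1),\ldots,(d,z),(c,x_1)),\]
\[H_1=((c,x_0),(c,x_1),(d,z),(d,y_{m+1}),\ldots,(d,x_0),(c,x_0)).\]
It then suffices to show $H_1$ and $H_2$ are null-homotopic.

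The main obstacle is precisely this last step: each $H_j$ has the same qualitative shape as $G$, but with a single step in $c^{\op}$ instead of two. I would address it by establishing a separate length-one version of the lemma: given $c\sim_i d$, a single $c^{\op}$-edge $(x,x')$, and a chamber $w\in c^{\op}\cap d^{\op}$ such that $w\sim_i x$ (here playing the role of the $i$-jump), any closed gallery of the form $((c,x),(c,x'),(d,x'),(\text{gallery in }d^{\op}\text{ from }x'\text{ to }w),(c,x))$ is null-homotopic. This case is more tractable because the $c^{\op}$-part contributes only one step: after again using (TCS2) to lift $x'$ to some $w'\in c^{\op}\cap d^{\op}$ with $w'\sim_i x'$, the closed gallery reduces, by Proposition~\ref{prop:null-homotopicccc} applied inside $\{d\}\times d^{\op}$ (to replace the $d^{\op}$-part by the shortest representative through $w'$), to a closed gallery that is an $\{i,j\}$-loop for some $j$, handled by a single elementary homotopy. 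Applying this reduction to $H_1$ and $H_2$ (with suitable choices of the analogue of $w$, provided by the $z$ already constructed) then finishes the proof. The delicate point throughout is keeping track of which sub-galleries are genuinely $J$-galleries for some $|J|\le 2$, so that the 2-homotopy moves are actually legal; this is where (TCS1) must intervene, to guarantee that once we have performed the $i$-lifts, the labels of the remaining steps fit into a rank-$2$ residue.
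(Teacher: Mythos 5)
Your approach is correct in spirit and can be completed, but it takes a genuinely different and substantially longer route than the paper's. The paper's proof is a two-line application of (TCS1): applying (TCS1) to the pair $(x_0,x_1)$ and then to the pair $(x_2,x_1)$ gives that either $d\op x_1$ or else $x_2\sim_i x_1\sim_i x_0$. In the first case one takes $(y_0,y_1,y_2)=(x_2,x_1,x_0)$; the resulting hexagonal loop collapses by first contracting the $\{i,j_2\}$-segment $((c,x_1),(c,x_2),(d,x_2),(d,x_1))$ to $((c,x_1),(d,x_1))$ and then contracting the remaining $\{i,j_1\}$-loop. In the second case one takes $(y_0,y_1)=(x_2,x_0)$; the whole closed gallery is then an $\{i\}$-gallery, hence null-homotopic in one move. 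No appeal to (TCS2), (TCS3), or Proposition~\ref{prop:null-homotopicccc} is needed at all.

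Your proof instead uses (TCS2) to produce the pivot $z\in c^{\op}\cap d^{\op}$ with $z\sim_i x_1$, routes the $y$-gallery through $z$ via (TCS3), inserts a back-and-forth $\{i\}$-loop at $z$, and splits the result into two closed sub-galleries $H_1$, $H_2$, each of which lands in a rank-two residue ($\{i,j_1\}$ and $\{i,j_2\}$ respectively, where $j_1, j_2$ are the adjacency types of $x_0\sim x_1$ and $x_1\sim x_2$). This works, but requires considerably more bookkeeping: the decomposition into $H_1\cdot H_2$ needs care about base points and stuttering, and $H_1$ only fits your ``length-one version'' after inversion and rotation (the third chamber of $H_1$ is $(d,z)$, not $(d,x_1)$, so it is not literally of the form $((c,x),(c,x'),(d,x'),\dots)$ you wrote). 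Also, your closing remark that (TCS1) ``must intervene'' is a red herring: because $z\sim_i x_1\sim_{j_1}x_0$ and $z\sim_i x_1\sim_{j_2}x_2$, the pairs $(z,x_0)$ and $(z,x_2)$ are automatically $\{i,j_1\}$- and $\{i,j_2\}$-equivalent, so (TCS3) alone already confines each $d^{\op}$-segment to the correct rank-two residue. You never actually invoke (TCS1). The net comparison is that your argument gives independence from (TCS1) at the price of much more machinery; the paper's argument is a sharp application of exactly (TCS1), which is precisely what that axiom was designed for.
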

\begin{proof}
By (TCS1), either $d\op x_1$ or $x_2\sim_ix_1\sim_ix_0$. In the former case, we can choose $k=2$ and $(y_0,y_1,y_2)=(x_2,x_1,x_0)$. In the latter case, we can choose $k=1$ and $(y_0,y_1)=(x_2,x_0)$.
\end{proof}

\begin{lemma}\label{lemma:cdnullhomotopic}
Let $i\in I$, let $c,d\in\CCC_+$ with $c\sim_i d$, and let $(x_0,\dots,x_k)$ be a gallery in $c^{\op}$ such that $x_0 \op d \op x_k$. Then there exists a gallery $(x_k=y_0,y_1,\dots,y_l=x_0)$ in $d^{\op}$ such that the gallery 
$$((c,x_0),\dots,(c,x_k),(d,y_0),\dots,(d,y_l),(c,x_0))$$
is null-homotopic in $\Opp(\CCC)$. 
\end{lemma}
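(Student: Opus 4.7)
The plan is to proceed by induction on the gallery length $k$. The base case $k=2$ is precisely Lemma~\ref{lemma:x0x1x2}. For $k\le 1$, taking $(y_0,\dots,y_l)$ to be the reverse of $(x_0,\dots,x_k)$ yields a closed gallery in $\Opp(\CCC)$ that lies inside a rank-$\le 2$ residue (indexed by $\{i\}\cup\{j_\mu\}$), and is therefore elementary homotopic to the stationary gallery in a single step.

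For the inductive step with $k\ge 3$, I would distinguish two cases. In the \emph{split case}, some intermediate $x_\mu$ with $1\le\mu\le k-1$ satisfies $x_\mu\op d$; applying the inductive hypothesis separately to $(x_0,\dots,x_\mu)$ and to $(x_\mu,\dots,x_k)$ (both strictly shorter than $k$) produces galleries in $d^{\op}$ whose concatenation gives the desired $(y_\lambda)$. The null-homotopy of the full closed gallery $G$ is then obtained by decomposing $G=A\cdot B\cdot C$, where $A\cdot C$ and $B$ are the two closed galleries coming from the two inductive null-homotopies: since $B$ collapses to the stationary gallery at $(c,x_\mu)$, we get $G\sim A\cdot C$, which is null-homotopic.

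In the \emph{no-split case}, no intermediate chamber is opposite to $d$. Since $x_1\not\op d$, axiom (TCS1) applied to $c\sim_i d$, $x_0\sim_{j_1}x_1$, $x_0\op d$ forces $j_1=i$, and hence $x_0\sim_i x_1$. I would then use (TCS2) to find a chamber $z\sim_i x_1$ with $z\op c$ and $z\op d$ (so also $z\sim_i x_0$), and apply Lemma~\ref{lemma:x0x1x2} to the $\{i\}$-gallery $(x_0,x_1,z)$ in $c^{\op}$ to produce a gallery $(z=v_0,\dots,v_m=x_0)$ in $d^{\op}$ and a corresponding null-homotopic closed gallery $G_a$. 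Next, I would invoke (TCS3) on the rank-$2$ residue $R_{\{i,j_2\}}(x_1)$ to obtain an $\{i,j_2\}$-gallery $(z=w_0,\dots,w_s=x_2)$ in $c^{\op}$ connecting $z$ to $x_2$; substituting this path for the single $\{j_2\}$-step $((c,x_1),(c,x_2))$ in $G$ is a legitimate elementary homotopy in $\Opp(\CCC)$ (both paths are $\{i,j_2\}$-galleries with the same endpoints), producing a homotopic closed gallery $G^*$ in which $(c,z)$ now appears as an intermediate chamber with $z\op d$. This reduces the no-split case to the split case applied to $G^*$.

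The main obstacle I expect is making the induction close in the no-split case: the modified gallery $G^*$ is longer than $G$ by $s-1$, so naive induction on length alone does not suffice. My plan is to refine the induction to a lexicographic double induction on the pair (length of gallery, number of intermediate chambers not opposite to $d$), and to choose the connecting $\{i,j_2\}$-gallery carefully --- for example, by applying (TCS2) near $x_2$ to produce a chamber $u\sim_i x_2$ with $u\op c,u\op d$ and using connectedness of $c^{\op}\cap R_{\{i,j_2\}}(x_1)$ to route $(w_\lambda)$ through $c^{\op}\cap d^{\op}$ up to the last step --- so that the modification strictly decreases the secondary measure even when the length grows, thereby closing the induction.
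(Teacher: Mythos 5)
Your base cases and split case are correct and essentially match the paper's proof. The genuine gap is in the no-split case, and you have already half-noticed it.

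In the no-split case you start by applying (TCS2) at $x_1$ to produce $z\sim_i x_1$ in $c^{\op}\cap d^{\op}$. This is the wrong place to apply it: after you splice in the $\{i,j_2\}$-gallery $(z=w_0,\dots,w_s=x_2)$ the modified gallery is strictly longer, so a plain induction on $k$ does not close, as you observe. Your proposed repair---a lexicographic double induction on (length, number of intermediate chambers not opposite $d$)---does not obviously work either: the lexicographic order compares length first, so an increase in length cannot be offset by a drop in the secondary count; and even if you reorder the pair, routing the inserted chambers through $c^{\op}\cap d^{\op}$ does not remove any of the pre-existing non-opposite chambers $x_1,\dots,x_{k-1}$, so the secondary count need not decrease.

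The clean resolution, which is what the paper does, is to apply (TCS2) at $x_2$ rather than at $x_1$. With $x_1,x_2\notin d^{\op}$, (TCS1) gives $x_0\sim_i x_1$ as you noted; (TCS2) applied to $x_2\op c$ produces $z\in c^{\op}\cap d^{\op}$ with $z\sim_i x_2$. Now $(z,x_2,\dots,x_k)$ is a gallery in $c^{\op}$ of length $k-1$ with $z\op d\op x_k$, so the induction hypothesis applies directly and no auxiliary measure is needed. For the remaining piece, $x_0\sim_i x_1\sim_j x_2\sim_i z$ are all in the $J$-residue of $x_1$ with $J=\{i,j\}$, and both $x_0$ and $z$ lie in $d^{\op}$; (TCS3) then yields a $J$-gallery from $z$ to $x_0$ inside $d^{\op}$. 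Concatenating this with the gallery from the induction hypothesis gives the desired $(y_0,\dots,y_l)$, and the null-homotopy follows from one elementary homotopy (replacing the segment $(c,x_0),(c,x_1),(c,x_2)$ by the detour through $(d,\cdot)$-chambers indexed by the $J$-gallery, all inside a $J$-residue of $\Opp(\CCC)$) followed by the homotopy given by the induction hypothesis. Your detour through Lemma~\ref{lemma:x0x1x2} on $(x_0,x_1,z)$ with $z\sim_i x_1$ is not needed once $z$ is chosen adjacent to $x_2$.
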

\begin{proof}
We proceed by induction on $k$. If $k=1$, the assertion is trivial; if $k=2$, the assertion follows from Lemma~\ref{lemma:x0x1x2}. Assume now that $k>2$. 

If $x_1 \in d^{\op}$ then applying the induction hypothesis to the gallery $G_1=(x_1,\dots,x_k)$ yields a gallery $H_1=(y_0=x_k,\dots,y_{l_1}=x_1)$ with the required properties. The claim then follows by putting $l=l_1+1$ and $y_l=x_0$. Similarly, if $x_2\in d^{\op}$, the induction hypothesis and the case $k=2$ yield the claim.

Assume now that $x_1,x_2\notin d^{\op}$. By (TCS1), we have $x_0\sim_ix_1$. By (TCS2), there exists $z\in c^{\op}\cap d^{\op}$ with $z\sim_ix_2$ (in particular, $z\neq x_2$). Applying the induction hypothesis to the gallery $(z,x_2,\dots,x_k)$, we obtain a gallery $(y_0=x_k,y_1,\dots,y_m=z)$ in $d^{\op}$ such that $((c,z),(c,x_2),\dots,(c,x_k),(d,x_k=y_0))$ and $((c,z),(d,z=y_m),(d,y_{m-1}),\dots,(d,y_0=x_k))$ are homotopic.

Let $j\in I$ be such that $x_1\sim_j x_2$ and set $J=\{i,j\}$. Since $x_0\sim_ix_1\sim_jx_2\sim_iz$, (TCS3) yields a $J$-gallery $(z=y_0',\dots,y'_{l'}=x_0)$ contained in $d^{\op}$. Since $((c,x_0),(c,x_1),(c,x_2),\dots,(c,x_k),(d,x_k=y_0))$ is elementary homotopic to $$((c,x_0=y'_{l'}),(d,y'_{l'}),\dots,(d,y'_0=z),(c,z),(c,x_2),\dots,(c,x_k),(d,x_k=y_0)),$$ which is homotopic to $$((c,x_0=y'_{l'}),(d,y'_{l'}),\dots,(d,y'_0=z),(c,z),(d,z=y_m),(d,y_{m-1}),\dots,(d,y_0=x_k))$$ and hence to
$$((c,x_0=y'_{l'}),(d,y'_{l'}),\dots,(d,y'_0=z=y_m),(d,y_{m-1}),\dots,(d,y_0=x_k)),$$
the gallery $(y_0,\dots,y_m=y_0',\dots,y'_{l'}=x_0)$ has the required properties.
\end{proof}

\begin{prop}\label{prop:mainwithJ}
Let $J\subseteq I$ be of cardinality at most $2$ and let $c=(c^+,c^-)\in\Opp(\CCC)$. Let $G=(c_0,\dots,c_k)$ be a closed gallery in $\Opp(\CCC)$ with $c_0=c=c_k$ and suppose that $c_{\mu}^+\in R_J(c^+)$ for all $\mu=0,\dots,k$. Then $G$ is null-homotopic.
\end{prop}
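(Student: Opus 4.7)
The plan is to induct on the reduced length $l^*$ of the closed gallery $G_+=(c_0^+,\dots,c_k^+)$, which lies in the rank-$\leq 2$ residue $R_J(c^+)$. First, by Lemma~\ref{lemma:stuttering}, I would replace $G$ by a homotopic gallery in which each step preserves either the $+$- or the $-$-component. If $l^*=0$, then $G$ has constant $+$-component $c^+$, and Proposition~\ref{prop:null-homotopicccc} yields the null-homotopy immediately.

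For $l^*\geq 1$, the core move is the following. Suppose $G$ contains a sub-segment $S=((c^+,w_0),(d,w_0),(d,w_1),\dots,(d,w_m),(c^+,w_m))$ that witnesses a single-chamber excursion of $G_+$: here $c^+\sim_i d$ for some $i\in I$, the chambers $w_0,\dots,w_m$ form a gallery in $d^{\op}$, and $w_0,w_m$ both belong to $(c^+)^{\op}$. Then Lemma~\ref{lemma:cdnullhomotopic}, applied with the roles of $c$ and $d$ swapped, produces a gallery $(w_m=z_0,z_1,\dots,z_l=w_0)$ in $(c^+)^{\op}$ together with a null-homotopy in $\Opp(\CCC)$ of the loop $((d,w_0),\dots,(d,w_m),(c^+,z_0),\dots,(c^+,z_l),(d,w_0))$. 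Plugging this into $S$ yields a homotopic segment $((c^+,w_0),(d,w_0),(c^+,w_0),(c^+,z_{l-1}),\dots,(c^+,z_1),(c^+,w_m),(d,w_m),(c^+,w_m))$; the two back-and-forth patterns $(c^+,w_0),(d,w_0),(c^+,w_0)$ and $(c^+,w_m),(d,w_m),(c^+,w_m)$ are closed $\{i\}$-galleries which collapse to the trivial galleries by elementary $\{i\}$-homotopies. The outcome is a gallery $G'$ homotopic to $G$ in which $S$ has been replaced by a sub-segment of constant $+$-component $c^+$, and whose positive projection has reduced length $l^*-2$, so the induction hypothesis applies.

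The main obstacle is producing such a single-chamber excursion sub-segment $S$. When $|J|\leq 1$ this is automatic, since $R_J(c^+)$ is then a single $\sim_i$-equivalence class and every excursion of $G_+$ off $c^+$ is of this form. The genuinely new case is $|J|=2$, where the first excursion of $G_+$ may wander through several chambers of the rank-$2$ residue $R_J(c^+)$ before returning to $c^+$, so that no single-chamber excursion is visible directly in $G$. I would resolve this by a secondary induction on the length of the first excursion, applying Lemma~\ref{lemma:cdnullhomotopic} to internal pieces of the wandering excursion in order to strictly shorten it, while controlling the opposition conditions on the intermediate $-$-components via (TCS3). Once the excursion has been reduced to a single-chamber one, the main inductive step applies, and iterating drives $l^*$ to $0$, where Proposition~\ref{prop:null-homotopicccc} closes the argument.
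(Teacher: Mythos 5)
Your core inductive move has a genuine gap, and it already shows up in the rank-one case. You claim that when $|J|\leq 1$, every excursion of $G_+$ off $c^+$ is a ``single-chamber excursion'' of the form $c^+\to d\to\cdots\to d\to c^+$, because $R_J(c^+)$ is a single $\sim_i$-class. But a single $\sim_i$-class can contain many chambers, and the reduced gallery $G_+^*=(c^+,e_1,\dots,e_{k^*-1},c^+)$ can visit several \emph{distinct} chambers $e_1\neq e_2\neq\cdots$ of that class before returning; no spike $c_{\mu-1}^*=c_{\mu+1}^*$ need occur anywhere. The same problem is worse for $|J|=2$ (think of a closed gallery running once around a hexagon in an $A_2$-residue). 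So the sub-segment $S$ your argument pivots on generally does not exist, and your ``secondary induction on the length of the first excursion'' is not an argument --- Lemma~\ref{lemma:cdnullhomotopic} transfers a constant-$+$ segment from $c$ to an $i$-adjacent $d$, but it does not shorten the list of distinct $+$-chambers visited, nor does (TCS3) by itself furnish the opposition hypotheses you would need at the intermediate $-$-components to iterate.

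The paper sidesteps the search for spikes entirely by a global flattening. Using Lemma~\ref{lemma:TCS2translated} it picks, for each reduced step of $G_+$, an auxiliary chamber $d_\mu\in R_J(c^-)$ opposite to both $c_{\mu-1}^*$ and $c_\mu^*$, and (TCS3) supplies $J$-galleries $Z_\mu$ linking the $d_\mu$'s inside $(c_\mu^*)^{\op}$. Lemma~\ref{lemma:cdnullhomotopic} is then used not to collapse a spike but to replace each hop $((c_{\mu-1}^*,d_\mu^*),(c_\mu^*,d_\mu^*))$ by a detour through $(c_{\mu-1}^*,d_\mu)$ and $(c_\mu^*,d_\mu)$, and Proposition~\ref{prop:null-homotopicccc} replaces each constant-$+$ segment by the corresponding $\overline Z_\mu$. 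The net effect is that $G$ is homotopic to a closed gallery $G''$ lying entirely in the $J$-residue $R_J((c^+,c^-))$ of $\Opp(\CCC)$, and --- this is the real punchline, which your proposal never reaches --- any closed $J$-gallery with $|J|\leq 2$ is null-homotopic by the very definition of elementary homotopy. There is no descending induction on the reduced length at all. To repair your argument you would essentially have to reconstruct this auxiliary scaffolding of $d_\mu$'s and $Z_\mu$'s, so I recommend following the paper's route.
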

\begin{proof}
By Lemma~\ref{lemma:stuttering}, we can assume that for each $\mu\in\{1,\dots,k\}$, there exists an $\epsilon\in\{\pm\}$ such that $c_{\mu-1}^{\epsilon}=c_{\mu}^{\epsilon}$. Let $k^*$ be the reduced length of $G_+$, let $G_+^*=(c_0^*,\dots,c_{k^*}^*)$ and let $\lambda=(\lambda_1,\dots,\lambda_{k^*})$ be the $\lambda$-tuple of $G_+$. In particular, setting $\lambda_0:=0$, $$c_{\mu-1}^*=c^+_{\lambda_{\mu-1}}=c^+_{\lambda_{\mu}-1}\quad\textrm{for $\mu=1,\dots,k^*$,}\quad\textrm{and}\quad c^*_{k^*}=c^+_{\lambda_{k^*}}=c^+_k=c^+_0=c^+.$$

For each $\mu\in\{1,\dots,k^*\}$, we can choose by Lemma~\ref{lemma:TCS2translated}  some $d_{\mu}\in R_J(c^-)$ such that $$c_{\mu}^* \op d_{\mu} \op c_{\mu-1}^*.$$ Put $d_0=d_{k^*+1}=c^-$. 

For $\mu\in\{0,\dots,k^*\}$, since $d_{\mu},d_{\mu+1}\in (c_{\mu}^*)^{\op}$, we can choose by (TCS3) a $J$-gallery $Z_{\mu}=(d_{\mu}=z_{\mu 0},\dots,z_{\mu m_{\mu}}=d_{\mu+1})$ contained in $(c_{\mu}^*)^{\op}$. Let $\overline{Z}_{\mu}$ denote the gallery $$((c^*_{\mu},d_{\mu}=z_{\mu 0}),(c^*_{\mu},z_{\mu 1}),\dots,(c^*_{\mu},z_{\mu m_{\mu}}=d_{\mu+1}))$$ in $\Opp(\CCC)$.

For $\mu\in\{1,\dots,k^*\}$, since $c^+_{\lambda_{\mu}}=c^*_{\mu}\neq c^*_{\mu-1}=c^+_{\lambda_{\mu}-1}$, the assumption made at the beginning of the proof implies that $c^-_{\lambda_{\mu}}=c^-_{\lambda_{\mu}-1}=:d^*_{\mu}$.

For $\mu\in\{1,\dots,k^*\}$, since $d^*_{\mu}=c^-_{\lambda_{\mu}}$ and $d_{\mu}\in (c_{\mu}^*)^{\op}$, we can choose by (TCS3) a gallery $X_{\mu}=(d_{\mu}=x_{\mu 0},\dots,x_{\mu k_{\mu}}=d^*_{\mu})$ contained in $(c_{\mu}^*)^{\op}$. Let $\overline{X}_{\mu}$ denote the gallery $$((c^*_{\mu},d_{\mu}=x_{\mu 0}),(c^*_{\mu},x_{\mu 1}),\dots,(c^*_{\mu},x_{\mu k_{\mu}}=d^*_{\mu}))$$ in $\Opp(\CCC)$. 

For $\mu\in\{1,\dots,k^*\}$, since $d^*_{\mu}=c^-_{\lambda_{\mu}-1}$ and $d_{\mu}\in (c_{\mu-1}^*)^{\op}$, Lemma~\ref{lemma:cdnullhomotopic} (applied to $(c,d):=(c_{\mu}^*,c_{\mu-1}^*)$ and $(x_0,\dots,x_k):=X_{\mu}$) provides a gallery $Y_{\mu}=(d_{\mu}^*=y_{\mu 0},\dots,y_{\mu l_{\mu}}=d_{\mu})$ contained in $(c_{\mu-1}^*)^{\op}$ such that the gallery
$$\overline{X}_{\mu}((c_{\mu}^*,d_{\mu}^*),(c_{\mu-1}^*,d_{\mu}^*))\overline{Y}_{\mu}((c_{\mu-1}^*,d_{\mu}),(c_{\mu}^*,d_{\mu}))$$
of $\Opp(\CCC)$ is null-homotopic.

For $\mu\in\{0,\dots,k^*-1\}$, consider the gallery $G_{\mu}=((c_{\mu}^*,d_{\mu}^*=c^-_{\lambda_{\mu}}),\dots,(c_{\mu}^*,c^-_{\lambda_{\mu+1}-1}=d_{\mu+1}^*))$ in $\Opp(\CCC)$, and set $G_{k^*}=((c_{k^*}^*,d_{k^*}^*=c^-_{\lambda_{k^*}}),\dots,(c_{k^*}^*,c^-_{k}))$. Then $G$ can be decomposed as
$$G=G_0((c_0^*,d_1^*),(c_1^*,d_1^*))G_1((c_1^*,d_2^*),(c_2^*,d_2^*))G_2\dots G_{k^*-1}((c_{k^*-1}^*,d_{k^*}^*),(c_{k^*}^*,d_{k^*}^*))G_{k^*}.$$
Since for each $\mu\in\{1,\dots,k^*\}$, the galleries $((c_{\mu-1}^*,d_{\mu}^*),(c_{\mu}^*,d_{\mu}^*))$ and $\overline{Y}_{\mu}((c_{\mu-1}^*,d_{\mu}),(c_{\mu}^*,d_{\mu}))\overline{X}_{\mu}$ are homotopic by the previous paragraph, we deduce that $G$ is homotopic to 
$$G'=G_0\overline{Y}_{1}((c_{0}^*,d_{1}),(c_{1}^*,d_{1}))\overline{X}_{1}G_1\overline{Y}_{2}\dots \overline{X}_{k^*-1}G_{k^*-1}\overline{Y}_{k^*}((c_{k^*-1}^*,d_{k^*}),(c_{k^*}^*,d_{k^*}))\overline{X}_{k^*}G_{k^*}.$$
On the other hand, Proposition~\ref{prop:null-homotopicccc} implies that $G_0\overline{Y}_1$ is homotopic to $\overline{Z}_0$, that $\overline{X}_{\mu}G_{\mu}\overline{Y}_{\mu+1}$ is homotopic to $\overline{Z}_{\mu}$ for all $\mu\in\{1,\dots,k^*-1\}$, and that $\overline{X}_{k^*}G_{k^*}$ is homotopic to $\overline{Z}_{k^*}$. In particular, $G'$ is homotopic to 
$$G''=\overline{Z}_0((c_{0}^*,d_{1}),(c_{1}^*,d_{1}))\overline{Z}_1\dots \overline{Z}_{k^*-1}((c_{k^*-1}^*,d_{k^*}),(c_{k^*}^*,d_{k^*}))\overline{Z}_{k^*}.$$
But $G''$ is a closed gallery in $R_J((c^+,c^-))$ and is therefore null-homotopic, as desired.
\end{proof}

\begin{lemma}\label{lemma:GXsimple}
Let $G$ be a gallery in $\Opp(\CCC)$ and let $X$ be a simple gallery in $\CCC_+$ such that $G_+^*$ and $X$ are elementary homotopic. Then there exist a gallery $H$ in $\Opp(\CCC)$ such that $H$ is homotopic to $G$ and such that $H^*_+=X$.
\end{lemma}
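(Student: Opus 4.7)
The plan is to localise the modification: decompose $G$ according to where its reduced $+$-projection agrees with $X$, replace the deviating middle segment using (TCS2), patch up the resulting endpoint mismatch using (TCS3), and check that the modified gallery is homotopic to $G$ via Proposition~\ref{prop:mainwithJ}.

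More concretely, write $G_+^* = (c_0^*, \ldots, c_{k^*}^*)$ and $X = (X_0, \ldots, X_{k'})$, and let $\mu = \mu'$, $\nu$, $\nu'$ and $J \subseteq I$ with $|J| \le 2$ be the data witnessing the elementary homotopy, so that $c_\eta^* = X_\eta$ for $\eta \le \mu$, $c_{k^*-\eta}^* = X_{k'-\eta}$ for $\eta \le k^* - \nu$, and the middle segments $(c_\mu^*,\ldots,c_\nu^*)$ and $(X_{\mu'},\ldots,X_{\nu'})$ are both $J$-galleries. Using the $\lambda$-tuple of $G_+$, I will split $G = G_1 G_2 G_3$ so that $G_1$ ends at the $\lambda_\mu$-th chamber of $G$, which has the form $(c_\mu^*, d_0^-)$, and $G_3$ starts at the $\lambda_\nu$-th chamber $(c_\nu^*, d_k^-)$. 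Then $G_2$ is a gallery from $(c_\mu^*, d_0^-)$ to $(c_\nu^*, d_k^-)$ whose $+$-projection is a $J$-gallery reducing to $(c_\mu^*,\ldots,c_\nu^*)$; in particular, all $+$-coordinates of $G_2$ lie in $R_J(c_\mu^*)$.

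Next I construct the replacement for $G_2$ in two pieces. Iterating (TCS2) along the $J$-gallery $(X_{\mu'},\ldots,X_{\nu'})$, starting from $(X_{\mu'}, d_0^-) = (c_\mu^*, d_0^-)$, I obtain a gallery
\[
\tilde H_2 = \bigl((X_{\mu'}, d_0^-),\, (X_{\mu'+1}, y_1),\, \ldots,\, (X_{\nu'}, y_{\nu'-\mu'})\bigr)
\]
in $\Opp(\CCC)$. Since $y_{\nu'-\mu'}$ and $d_k^-$ both lie in $(c_\nu^*)^{\op}$, the connectedness clause of (TCS3) yields a gallery joining them in $(c_\nu^*)^{\op}$; lifting it to $\{c_\nu^*\}\times (c_\nu^*)^{\op} \subseteq \Opp(\CCC)$ produces a gallery $B$ from $(c_\nu^*, y_{\nu'-\mu'})$ to $(c_\nu^*, d_k^-)$. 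I then set $H := G_1\, \tilde H_2\, B\, G_3$.

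To verify that $H_+^* = X$, observe that $G_{1+}^* = (X_0, \ldots, X_\mu)$ and $G_{3+}^* = (X_{\nu'}, \ldots, X_{k'})$ by construction, $\tilde H_{2+} = (X_{\mu'}, \ldots, X_{\nu'})$ is already reduced (by simplicity of $X$), and $B_+$ is constant at $X_{\nu'}$; concatenating and collapsing the duplicated boundary chambers $X_\mu = X_{\mu'}$ and $X_{\nu'}$ yields exactly $X$. The main obstacle is arranging that the replacement of $G_2$ has the correct endpoint $(c_\nu^*, d_k^-)$ so that it can be spliced before $G_3$, and this is precisely the role of the correction piece $B$: it is tolerated because it lives over the single chamber $c_\nu^*$ and thus contributes nothing to the reduced $+$-projection. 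Once the endpoints match, the homotopy between $G$ and $H$ reduces, by preservation of homotopy under concatenation, to a homotopy between $G_2$ and $\tilde H_2 B$; both of the latter have all their $+$-coordinates in $R_J(c_\mu^*)$, so the closed gallery $G_2 \cdot (\tilde H_2 B)^{-1}$ based at $(c_\mu^*, d_0^-)$ satisfies the hypotheses of Proposition~\ref{prop:mainwithJ} and is null-homotopic, which gives the required homotopy.
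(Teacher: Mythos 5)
Your proof is correct and follows essentially the same approach as the paper: decompose $G$ around the middle segment whose reduced $+$-projection is the $J$-gallery being replaced, build the replacement by iterating (TCS2) along the new $J$-gallery $(X_{\mu'},\dots,X_{\nu'})$ to obtain a lift into $\Opp(\CCC)$, use (TCS3) to connect the resulting $-$-endpoint to the required one via a constant-$+$ correction gallery, and then invoke Proposition~\ref{prop:mainwithJ} to see that the local replacement is a homotopy. The only cosmetic difference is that the paper writes the decomposition as $G=G_1(c_{\lambda_\mu},\dots,c_{\lambda_\nu})G_2$ rather than $G=G_1G_2G_3$, and the paper applies Proposition~\ref{prop:mainwithJ} directly to conclude $G\sim H$, whereas you reduce explicitly to the null-homotopy of the closed gallery $G_2(\tilde H_2 B)^{-1}$; both are the same argument.
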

\begin{proof}
Let $\lambda(G_+)=(\lambda_1,\dots,\lambda_k)$, let $G^*_+=(c_0^*,\dots,c^*_k)$, let $X=(c_0',\dots,c_l')$ and let $J,\mu,\nu,\mu',\nu'$ be as in the definition of an elementary homotopy from $G^*_+$ to $X$: we have
$$c_0^*=c_0', \ c_1^*=c_1',\ \dots, \ c_{\mu}^*=c'_{\mu'}\quad\textrm{and}\quad c_{\nu}^*=c'_{\nu'}, \ c_{\nu+1}^*=c'_{\nu'+1}, \ \dots, \ c_k^*=c'_l,$$
while the $J$-galleries $(c_{\mu}^*,\dots,c_{\nu}^*)$ and $(c'_{\mu'},\dots,c'_{\nu'})$ are homotopic. 

Write $G=G_1(c_{\lambda_{\mu}},\dots,c_{\lambda_{\nu}})G_2$, as well as $c_{\lambda_{\mu}}=(c_{\mu}^*,a)$ and $c_{\lambda_{\nu}}=(c_{\nu}^*,b)$. 
Put $l'=\nu'-\mu'$. Since $c'_{\mu'}=c_{\mu}^* \op a$, a repeated use of (TCS2) allows to inductively construct a gallery $(a=a_0,\dots,a_{l'})$ in $\CCC_-$ such that 
$((c'_{\mu'},a_0),(c'_{\mu'+1},a_1),\dots,(c'_{\nu'},a_{l'}))$ is a gallery in $\Opp(\CCC)$. Since $c'_{\nu'}=c_{\nu}^* \op b$, we can also choose by (TCS3) a gallery $(a_{l'}=b_0,\dots,b_{l''}=b)$ contained in $(c'_{\nu'})^{\op}$. 

Set $$H=G_1((c'_{\mu'},a_0),(c'_{\mu'+1},a_1),\dots,(c'_{\nu'},a_{l'})=(c'_{\nu'},b_0),(c'_{\nu'},b_1),\dots,(c'_{\nu'},b_{l''}))G_2.$$
Then $G$ is homotopic to $H$ by Proposition~\ref{prop:mainwithJ}, since $(c'_{\mu'},\dots,c'_{\nu'})$ is contained in a $J$-residue. Moreover, since $(c'_{\mu'},c'_{\mu'+1},\dots,c'_{\nu'})$ is a simple gallery, we have $$H_+^*=(c_0^*,\dots,c_{\mu}^*)(c_{\mu^*}=c'_{\mu'},c'_{\mu'+1},\dots,c'_{\nu'}=c_{\nu}^*)(c_{\nu}^*,\dots,c_k^*)=X,$$ as desired.
\end{proof}

If a gallery $G$ is (resp. elementary) homotopic to a gallery $H$, we write $G\sim H$ (resp. $G\to H$).

\begin{theorem}
Let $\CCC$ be a simply connected twin chamber system. Then $\Opp(\CCC)$ is simply connected.
\end{theorem}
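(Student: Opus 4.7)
The plan is to null-homotope any closed gallery $G$ in $\Opp(\CCC)$ by first null-homotoping its projection $G^*_+$ in the simply connected chamber system $\CCC_+$, lifting each step back to $\Opp(\CCC)$ via Lemma~\ref{lemma:GXsimple}, and then invoking Proposition~\ref{prop:null-homotopicccc} once the lifted gallery has constant $+$-projection. Connectedness of $\Opp(\CCC)$ is already established, so it remains to show that every closed gallery $G=(c_0,\ldots,c_k)$ in $\Opp(\CCC)$ is null-homotopic.

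The reduced gallery $G^*_+$ is a closed simple gallery in $\CCC_+$ and is therefore null-homotopic there. One can produce a sequence of simple galleries $G^*_+ = X_0 \to X_1 \to \cdots \to X_n = (c_0^+)$ in $\CCC_+$, each transition being a single elementary homotopy: starting from any null-homotopy sequence, one iteratively replaces each inserted $J$-subgallery (with $|J|\leq 2$) by its reduced form, which is still a $J$-gallery with the same endpoints, and the simplicity of the ambient gallery $X_{i-1}$ ensures that no boundary repetitions arise in $X_i$, so that $X_i$ is itself simple and $X_{i-1} \to X_i$ remains a single elementary homotopy.

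We then construct galleries $H_0 = G, H_1, \ldots, H_n$ in $\Opp(\CCC)$ inductively: given $H_{i-1} \sim G$ with $(H_{i-1})^*_+ = X_{i-1}$, Lemma~\ref{lemma:GXsimple} applied to $H_{i-1}$ and the simple gallery $X_i$ (which is elementary homotopic to $X_{i-1} = (H_{i-1})^*_+$) produces a gallery $H_i \sim H_{i-1}$ with $(H_i)^*_+ = X_i$. The final gallery $H_n \sim G$ then satisfies $(H_n)^*_+ = (c_0^+)$, meaning that every chamber of $(H_n)_+$ equals $c_0^+$; Proposition~\ref{prop:null-homotopicccc} applied with $c := c_0^+$ therefore shows $H_n$ is null-homotopic in $\Opp(\CCC)$, and hence so is $G$. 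The main technical subtlety of the argument lies in ensuring that the intermediate galleries $X_i$ can be chosen simple while the transitions remain single elementary homotopies — handled by the reduction scheme sketched above — which is precisely what makes Lemma~\ref{lemma:GXsimple} iteratively applicable.
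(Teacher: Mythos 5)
Your proof is correct and follows essentially the same route as the paper's: reduce the $+$-projection of the closed gallery, use simple connectedness of $\CCC_+$ to obtain a chain of elementary homotopies through simple galleries down to the trivial gallery, lift this chain back to $\Opp(\CCC)$ via Lemma~\ref{lemma:GXsimple}, and conclude with Proposition~\ref{prop:null-homotopicccc}. The one place you elaborate beyond the paper is in justifying that the intermediate galleries $X_i$ can be taken simple while keeping the transitions elementary — the paper simply asserts ``say $G^*_+\to X_1\to\cdots\to X_r=(c^+)$ with each $X_\mu$ a simple gallery''; your sketch of the reduction is the right idea, though as written it is slightly muddled (one should pass each $Y_i$ to its full reduction $Y_i^*$, and check that $Y_{i-1}^*\to Y_i^*$ is again a single elementary homotopy or a trivial one, rather than speak of replacing the ``inserted'' subgallery inside $X_{i-1}$, which conflates the source and target of the homotopy).
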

\begin{proof}
Let $G$ be a closed gallery in $\Opp(\CCC)$. Then $G_+$ is a closed gallery in $\CCC_+$. Since $\CCC_+$ is simply connected by assumption, $G^*_+$ can be transformed to a length zero gallery $(c^+)$ by a sequence of elementary homotopies, say $G^*_+ \to X_1\to X_2 \to\dots\to X_r=(c^+)$ with each $X_{\mu}$ a simple gallery. Applying Lemma~\ref{lemma:GXsimple} inductively, we find galleries $H_{\mu}$ in $\Opp(\CCC)$ such that $G\sim H_1\sim H_2\sim\dots\sim H_r$ and $H^*_{\mu+}=X_{\mu}$ for all $\mu=1,\dots,r$. As $H_r$ is null-homotopic by Proposition~\ref{prop:null-homotopicccc}, the claim follows.
\end{proof}


\section{The opposition datum of \texorpdfstring{$G^{\min}_R$}{GminR}}\label{section:TCSofGminR}
Let $R$ be a local ring, with residue field $k$. Assume that $A$ is $2$-spherical and that $R$ satisfies (co). The purpose of this section is to associate a twin chamber system to $G^{\min}_R$, to which we can apply the results of \S\ref{section:TCSdef}.

\begin{definition}
 We define the two chamber systems $\CCC_{\pm}=\CCC_{\pm}(G^{\min}_R)$ over $I$ as follows. The set of chambers of $\CCC_{\pm}$ is $G^{\min}_R/\oB^{\pm}_R$. Set $c_0^{\pm}:=\oB^{\pm}_R\in\CCC_{\pm}$. For $g,h\in G^{\min}_R$, the chambers $gc_0^{\pm}$ and $hc_0^{\pm}$ are {\bf $i$-adjacent} for some $i\in I$ (denoted $gc_0^{\pm} \sim_i hc_0^{\pm}$) if $h\inv g\in \oP^{\pm}_{iR}$. Note that $\oP^{\pm}_{iR}=\oG_{iR}\oB^{\pm}_R=\oB^{\pm}_R\oG_{iR}$ by Lemma~\ref{lemma:BGiisGiB}.

Two chambers $gc_0^{+}$ and $hc_0^{-}$ with $g,h\in G^{\min}_R$ are {\bf opposite} (denoted $gc_0^+ \op_R hc_0^-$) if $h\inv g\in \oB^{-}_R\oB^{+}_R$. 
We denote by $\CCC=\CCC(G^{\min}_R)$ the {\bf opposition datum of $G^{\min}_R$}, consisting of the pair $(\CCC_+,(\sim_i)_{i\in I})$, $(\CCC_-,(\sim_i)_{i\in I})$ of chamber systems together with the opposition relation $\op=\op_R$. Note that $G^{\min}_R$ acts transitively on $\CCC$.
\end{definition}

\begin{lemma}
The opposition datum $\CCC$ satisfies the axiom (TCS1).
\end{lemma}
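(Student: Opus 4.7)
The plan is to use transitivity of $G^{\min}_R$ on opposite pairs to reduce to the base chambers. If $c \op x$ with $c = gc_0^+$ and $x = hc_0^-$, writing $h\inv g = b_- b_+ \in \oB^-_R \oB^+_R$ and setting $k := gb_+\inv$ yields $kc_0^+ = c$ and $kc_0^- = x$, so the $G^{\min}_R$-orbit of $(c_0^+,c_0^-)$ covers all opposite pairs. I may therefore assume $\epsilon = +$ (the case $\epsilon = -$ being symmetric, the definitions of $\CCC_\pm$ and of $\op$ being exchanged under $+\leftrightarrow -$), $c = c_0^+$, and $x = c_0^-$. Writing $d = d_0 c_0^+$ with $d_0 \in \oP^+_{iR}$ and $y = y_0 c_0^-$ with $y_0 \in \oP^-_{jR}$, the hypotheses $d \op x$ and $c \op y$ translate to
\[
d_0 \in \oB^-_R \oB^+_R \quad\text{and}\quad y_0 \in \oB^+_R \oB^-_R,
\]
and I must show $y_0\inv d_0 \in \oB^-_R \oB^+_R$ whenever $i \neq j$.

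Next, I would extract normal forms for $d_0$ and $y_0$. Lemma~\ref{lemma:BGiisGiB} gives $\oP^+_{iR} = \oG_{iR}\oB^+_R$, so $d_0 = g_i b_+$ with $g_i \in \oG_{iR} \cap \oB^-_R\oB^+_R$; Lemma~\ref{lemma:GicapB-B+} then places $g_i$ in $U_{-\alpha_iR} U_{\alpha_iR} T_R$, whence
\[
d_0 = x_{-\alpha_i}(s)\, x_{\alpha_i}(r)\, t\, b_+
\]
for some $s,r \in R$, $t \in T_R$, $b_+ \in \oB^+_R$. The mirror argument for the opposite signs yields
\[
y_0 = x_{\alpha_j}(s')\, x_{-\alpha_j}(r')\, t'\, b_-
\]
for some $s',r' \in R$, $t' \in T_R$, $b_- \in \oB^-_R$.

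The crux is that when $i \neq j$, the elements $x_{\alpha_j}(s')$ and $x_{-\alpha_i}(s)$ commute. Indeed the pair $\{-\alpha_i,\alpha_j\}$ is prenilpotent, since $s_i$ sends both to positive roots ($s_i(-\alpha_i)=\alpha_i$ and $s_i\alpha_j = \alpha_j - a_{ij}\alpha_i > 0$) while $s_j$ sends both to negative roots. Hence the Steinberg relation~(\ref{eqn:commut_rel_remy2}) applies in $G_R$, and therefore in $G^{\min}_R$ via $\varphi_R$; moreover the interval $\,]\alpha_j,-\alpha_i[_{\NN}$ contains only integer combinations with $\alpha_j$-coefficient $\geq 1$ and $\alpha_i$-coefficient $\leq -1$, which cannot lie in $\Phi = \Phi_+ \cup \Phi_-$, so it is empty and the commutator is trivial. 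Granting this, one rewrites
\[
y_0\inv d_0 = \bigl(b_-\inv (t')\inv x_{-\alpha_j}(-r')\, x_{-\alpha_i}(s)\bigr)\cdot\bigl(x_{\alpha_j}(-s')\, x_{\alpha_i}(r)\, t\, b_+\bigr),
\]
with the first factor in $\oB^-_R$ and the second in $\oB^+_R$. This gives $y_0\inv d_0 \in \oB^-_R \oB^+_R$, i.e.\ $d \op y$. The only non-routine ingredient is the joint application of Lemmas~\ref{lemma:BGiisGiB} and~\ref{lemma:GicapB-B+} to normalise $d_0$ and $y_0$; after that, the empty-interval observation turns the rest into bookkeeping.
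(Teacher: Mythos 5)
Your proof is correct and follows essentially the same approach as the paper's: reduce to $c=c_0^+$, $x=c_0^-$ by transitivity of $G^{\min}_R$ on $\Opp(\CCC)$, then use Lemma~\ref{lemma:GicapB-B+} (together with Lemma~\ref{lemma:BGiisGiB}) to put $d=x_{-\alpha_i}(s)c_0^+$ and $y=x_{\alpha_j}(s')c_0^-$, and finally invoke commutativity of $U_{-\alpha_i}$ and $U_{\alpha_j}$ for $i\neq j$. The paper states this more tersely, but you supply the same ingredients — in particular the prenilpotency of $\{-\alpha_i,\alpha_j\}$ and the emptiness of $]\alpha_j,-\alpha_i[_{\NN}$, which the paper leaves implicit.
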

\begin{proof}
WLOG we may assume that $\epsilon=+$ (the proof is similar for $\epsilon=-$), that $c=c_0^+$ and that $x=c_0^-$ (because $G^{\min}_R$ is transitive on $\Opp(\CCC)$). 

Since $d \in (c_0^-)^{\op_R}=\oU^{-}_Rc_0^+$ and $d\sim_i c_0^+$ (that is, $d\in \oG_{iR}c_0^+$), Lemma~\ref{lemma:GicapB-B+} implies that $d=x_{-i}(r)c_0^+$ for some $r\in R$. Similarly, $y=x_j(s)c_0^-$ for some $s\in R$. If $i\neq j$, then $d=x_{-i}(r)c_0^+ \op_R x_j(s)c_0^-=y$ because $x_j(-s)x_{-i}(r)c_0^+=x_{-i}(r)x_j(-s)c_0^+=x_{-i}(r)c_0^+ \op_R c_0^-$.
\end{proof}

\begin{lemma}
The opposition datum $\CCC$ satisfies the axiom (TCS2).
\end{lemma}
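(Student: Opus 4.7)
The plan is to reduce to an elementary computation in $\SL_2(R)$, exploiting the locality of $R$. First, using the transitivity of $G^{\min}_R$ on $\Opp(\CCC)$ (noted at the end of the definition of $\CCC(G^{\min}_R)$), together with the Cartan--Chevalley involution $\omega_R$ to swap signs if needed, I would reduce to the case $\epsilon=+$, $c=c_0^+$ and $x=c_0^-$. Then $d\sim_i c$ gives $d=gc_0^+$ for some $g\in \oP^+_{iR}=\oG_{iR}\oB^+_R$ (using Lemma~\ref{lemma:BGiisGiB}), and absorbing the $\oB^+_R$-factor (which fixes $c_0^+$) allows us to take $g=\varphi_{iR}(M)$ for some $M\in\SL_2(R)$.

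Next, I would look for $y$ of the form $y=y'c_0^-$ with $y':=x_i(-t)\in U_{\alpha_iR}$ for a suitable $t\in R$. This choice automatically gives $x\sim_i y$ (since $y'\in U_{\alpha_iR}\subseteq \oG_{iR}\subseteq \oP^-_{iR}$) and $c\op_R y$ (since $(y')^{-1}\in \oU^+_R\subseteq \oB^+_R\subseteq \oB^-_R\oB^+_R$), so only the condition $d\op_R y$ remains to be arranged. Unravelling the definitions, this last condition becomes
$$(y')^{-1}g=\varphi_{iR}\!\left(\begin{pmatrix}1&t\\0&1\end{pmatrix}M\right)\in \oB^-_R\oB^+_R,$$
and since $\varphi_{iR}(B_2^\pm(R))\subseteq \oB^\pm_R$, it suffices to place the matrix $\begin{psmallmatrix}1&t\\0&1\end{psmallmatrix}M$ in the \emph{big cell} of $\SL_2(R)$, namely $B_2^-(R)B_2^+(R)=\{N\in\SL_2(R) : N_{11}\in R^\times\}$ (the latter equality being a direct computation).

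Writing $M=\begin{psmallmatrix}a&b\\ c&d'\end{psmallmatrix}$ with $ad'-bc=1$, the $(1,1)$-entry of the product is $a+tc$, and the locality of $R$ makes it trivial to pick $t$: if $a\in R^\times$ take $t=0$; otherwise $a\in L$, and then $bc=ad'-1\in R^\times$ forces $c\in R^\times$, so $t=1$ works (a unit plus an element of $L$ is a unit). The main obstacle --- such as it is --- is in the bookkeeping of the first paragraph: confirming that the orbit of $(c_0^+,c_0^-)$ under $G^{\min}_R$ is indeed all of $\Opp(\CCC)$ (so that the reduction is legitimate) and keeping track of which side of the group the multiplication happens on. Once set up, the matrix calculation is routine.
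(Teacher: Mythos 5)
Your proposal is correct and takes essentially the same route as the paper: both reduce by transitivity to $c=c_0^+$, $x=c_0^-$, and then exploit the decomposition $\SL_2(R)=U_2^+(R)U_2^-(R)B_2^+(R)$ to choose $y$ of the form $x_i(r)c_0^-$. The only cosmetic difference is that the paper invokes this decomposition via the fact that local rings have stable rank $1$, whereas you verify the "big cell" criterion by a direct matrix computation.
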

\begin{proof}
As in the previous lemma, we may assume that $\epsilon=+$, that $c=c_0^+$ and that $x=c_0^-$. Since $R$ is local, it has stable rank $1$, and hence $\SL_2(R)=U_2^+(R)U_2^-(R)B_2^+(R)$. There thus exist $r,s\in R$ such that $d=x_i(r)x_{-i}(s)c_0^+$, and we can choose $y=x_i(r)c_0^-$.
\end{proof}

\begin{lemma}
The opposition datum $\CCC$ satisfies the axiom (TCS3).
\end{lemma}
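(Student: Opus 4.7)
The plan is to use the Cartan--Chevalley involution $\omega_R$ and the transitivity of $G^{\min}_R$ on $\Opp(\CCC)$ to reduce to the case $\epsilon=+$, $c=c_0^+$, and (whenever $c^{\op_R}\cap R_J$ is nonempty) $R_J=R_J(c_0^-)=\overline{G}_{JR}\cdot c_0^-$. The first observation to record is that $hc_0^-\op_R c_0^+$ iff $h\in (\overline{B}^-\overline{B}^+)\inv=\overline{B}^+\overline{B}^-=\overline{U}^+_R\overline{B}^-$ (using $T_R\subseteq\overline{B}^-$); in particular, $(c_0^+)^{\op_R}=\overline{U}^+_R\cdot c_0^-$.

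With this description in hand, the connectedness of $c^{\op_R}$ is immediate from Proposition~\ref{prop:simplegenerationU+}: any $u\in\overline{U}^+_R$ can be written as a product $x_{i_1}(r_1)\cdots x_{i_k}(r_k)$ of simple root group elements, and the gallery $c_0^-,x_{i_1}(r_1)c_0^-,\dots,uc_0^-$ lies entirely in $\overline{U}^+_Rc_0^-$, each step being $i_\ell$-adjacent as $x_{i_\ell}(r_\ell)\in U_{\alpha_{i_\ell}R}\subseteq\overline{P}^-_{i_\ell R}$. The key identification I would next aim for is
\[
R_J(c_0^-)\cap (c_0^+)^{\op_R}=\overline{U}^+_{JR}\cdot c_0^-,\qquad\textrm{where}\quad \overline{U}^+_{JR}=\langle U_{\alpha_iR}:i\in J\rangle.
\]
The inclusion $\supseteq$ is immediate from $\overline{U}^+_{JR}\subseteq\overline{U}^+_R\cap\overline{G}_{JR}$. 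Granting the identification, the same gallery construction---now invoking Proposition~\ref{prop:simplegenerationU+} for the sub-GCM $A_J$, which inherits $2$-sphericity and (co) from $A$---produces a $J$-gallery inside $\overline{U}^+_{JR}c_0^-$ from $c_0^-$ to any $uc_0^-$.

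The main obstacle will therefore be the nontrivial inclusion $\subseteq$ in the displayed identity, and I would handle it via the reduction map $\pi_R\colon G^{\min}_R\to G_k$. Given a representative $h\in\overline{G}_{JR}$ of a chamber in $R_J(c_0^-)\cap(c_0^+)^{\op_R}$, one has $h\in\overline{B}^+\overline{B}^-$, so $\pi_R(h)\in B^+_kB^-_k$. Combining Lemma~\ref{lemma:GJRcappiRinvB} with $\omega_R$ (to flip signs) yields $G_{JR}\cap\pi_R\inv(B^+_kB^-_k)\subseteq T_RU^+_{JR}U^-_{JR}$ in $G_R$; lifting $h$ to $\tilde h\in G_{JR}$ and pushing the decomposition through $\varphi_R$ produces $h=tu^+u^-$ with $t\in T_R$ and $u^\pm\in\overline{U}^\pm_{JR}$. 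Since $t$ and $u^-$ fix $c_0^-$ (both lying in $\overline{B}^-$) and $T_R$ normalises $\overline{U}^+_{JR}$, one obtains $hc_0^-=tu^+c_0^-=(tu^+t\inv)c_0^-\in\overline{U}^+_{JR}c_0^-$, completing the verification of (TCS3).
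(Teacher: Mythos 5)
Your proof is correct and follows the same strategy as the paper's, which reduces (TCS3) to the statement that $\oU^{\pm}_R$ (and its analogue for $A_J$) is generated by the simple root groups, a fact supplied by Proposition~\ref{prop:simplegenerationU+}. You provide substantially more detail than the paper's two-line proof: in particular, you explicitly establish the non-obvious identification $(c_0^+)^{\op_R}\cap R_J(c_0^-)=\langle U_{\alpha_iR}:i\in J\rangle\cdot c_0^-$ via Lemma~\ref{lemma:GJRcappiRinvB} and the Cartan--Chevalley involution, a step the paper leaves implicit when it writes ``and similarly for $A$ replaced by $A_J$.''
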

\begin{proof}
To ensure that $\CCC$ satisfies (TCS3), it suffices to check that $\oU^{\pm}_R=\langle U_{\pm\alpha_i}(R) \ | \ i\in I\rangle$ (and similarly for $A$ replaced by $A_J=(a_{ij})_{i,j\in J}$ for any $J\subseteq I$ with $|J|=2$). This thus follows from Proposition~\ref{prop:simplegenerationU+}.
\end{proof}

\begin{lemma}
The opposition datum $\CCC$ satisfies the axiom (TCS4).
\end{lemma}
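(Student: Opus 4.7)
The plan is as follows. First, the Cartan--Chevalley involution $\omega_R\in\Aut(G_R)$, being symmetric in the defining relations of $G_R$, descends to an automorphism $\overline{\omega}_R$ of $G^{\min}_R$ which swaps $\oB^+_R\leftrightarrow\oB^-_R$ (since $\omega_R$ swaps $U^\pm_R$ and fixes $T_R$) and therefore induces an automorphism of the opposition datum $\CCC$ interchanging $\CCC_+$ and $\CCC_-$ while preserving $\op$. It thus suffices to construct $\omega_c$ for $c\in\CCC_+$. By the transitive action of $G^{\min}_R$ on $\CCC_+$, I may further take $c=c_0^+$; for general $c=gc_0^+$, setting $\omega_c(x):=g\cdot\omega_{c_0^+}(g\inv x)$ will inherit all the required properties from $\omega_{c_0^+}$.

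The candidate $\omega_{c_0^+}\co\CCC_-\to\CCC_+$ is dictated by the field case. Over the residue field $k$, Remark~\ref{remark:Bruhat_and_Levidec_over_fields}(2) yields the Birkhoff decomposition $G_k=\bigsqcup_{w\in\WW}\oU^+_k\widetilde{w}\oB^-_k$, so every chamber of $\CCC_-(G_k)$ admits a presentation $u\widetilde{w}c_0^-$ with $u\in\oU^+_k$ and $w\in\WW$ uniquely determined; the assignment $u\widetilde{w}c_0^-\mapsto u\widetilde{w}c_0^+$ then verifies the field analogue of (TCS4) over $G_k$. The plan is to lift this to the corresponding decomposition
\[
G^{\min}_R = \bigsqcup_{w\in\WW}\oU^+_R\widetilde{w}\oB^-_R
\]
over $R$, via $\pi_R\co G^{\min}_R\to G_k$: the element $w$ is determined by $\pi_R(g)$, the factor $u_k\in\oU^+_k$ lifts to $\tilde u\in\oU^+_R$ through surjectivity of $\oU^+_R\to\oU^+_k$ (cf. Proposition~\ref{prop:simplegenerationU+}), and the residual discrepancy lies in $\ker\pi_R\subseteq U^-_LB^+_R$ by Proposition~\ref{prop:kerpiR}. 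Granting this decomposition, I set $\omega_{c_0^+}(u\widetilde{w}c_0^-):=u\widetilde{w}c_0^+$.

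The three required properties then follow readily. The opposition $\omega_{c_0^+}(x)\op x$ is automatic since $u\widetilde{w}c_0^+\op u\widetilde{w}c_0^-$ by $G^{\min}_R$-equivariance of $\op$. For $x\in(c_0^+)^{\op}=\oU^+_Rc_0^-$ the decomposition forces $w=1$, so $\omega_{c_0^+}(x)=uc_0^+=c_0^+$ using $u\in\oU^+_R\subseteq\oB^+_R$. Finally, the chamber map property reduces to tracking how right multiplication by $\oP^-_{iR}$ (which realises $i$-adjacency on $\CCC_-$) permutes the Birkhoff strata, sending the one indexed by $w$ into the union of the strata indexed by $w$ and $ws_i$, in a way compatible with $i$-adjacency in $\CCC_+$; this step leans on Lemma~\ref{lemma:BGiisGiB} and the rank-$1$ computations $\SL_2(R)=U_2^+(R)U_2^-(R)B_2^+(R)$ already used for (TCS2).

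The hard part will be rigorously establishing the Birkhoff decomposition over $R$: the naive lift $g=\tilde u\widetilde{w}\tilde b\cdot h$ with $h\in\ker\pi_R\subseteq U^-_LB^+_R$ produces a residual factor in $B^+_R$ that must be absorbed into the $\oU^+_R\widetilde{w}\oB^-_R$ form. This will require a careful commutation argument exploiting the inclusion $U^-_L\subseteq\oB^-_R$, the $\GE$-structure of $R$, and the condition (\ref{Bir}) (which controls the uniqueness of the decomposition and is available here through the rank-$1$ analysis of Section~\ref{section:opposition_local_Bezout_domains} together with Theorem~\ref{theorem:spherical_injectivity}).
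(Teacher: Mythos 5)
Your plan hinges on the decomposition $G^{\min}_R = \bigsqcup_{w\in\WW}\oU^+_R\widetilde{w}\oB^-_R$, which you rightly flag as ``the hard part''; unfortunately it is simply false over a local ring $R$ that is not a field, so the map $\omega_{c_0^+}$ you want is not even defined on all of $\CCC_-$. Already for $A=A_1$ and $G^{\min}_R=\SL_2(R)$, take any $r\in L\setminus\{0\}$ and let $g=\begin{psmallmatrix}0&1\\-1&r\end{psmallmatrix}\in\SL_2(R)$, so that $\pi_R(g)=\widetilde{s}$. Matrices in $U_2^+(R)\widetilde{s}B_2^-(R)$ have bottom-right entry $0$, while matrices in $U_2^+(R)B_2^-(R)$ have bottom-right entry a unit; the bottom-right entry of $g$ is $r\in L\setminus\{0\}$, which is neither, so $g\notin\bigcup_{w\in\WW}\oU^+_R\widetilde{w}\oB^-_R$. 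Concretely, $g=x_{-1}(-r)\widetilde{s}$ with $x_{-1}(-r)\in\ker\pi_R$: this is exactly the residual factor you anticipate, and it cannot be absorbed. The obstruction is structural, not a matter of clever commutation --- condition (\ref{Bir}) controls uniqueness of such a decomposition, but nothing gives existence, and over a non-field local ring it fails.

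What rescues the argument in the paper is the observation that $\omega_c$ does not need to retain any $\oU^+_R$-factor at all; only the Weyl group element, read off over the residue field, matters. The paper sets $\omega_c(x):=(\wpi_R|_{\Sigma^+_R})^{-1}\sigma_k(\wpi_R(x))$, where $\sigma_k$ is the co-projection of the twin building of $G_k$ onto the fundamental twin apartment; concretely $\omega_c(gc_0^-)=\widetilde{w}c_0^+$ where $w=\delta^*_k(\overline{c}_0^+,\wpi_R(gc_0^-))$. In particular the image lies in the fundamental apartment $\Sigma^+_R$, whereas your $u\widetilde{w}c_0^+$ does not --- so the two maps differ even over a field. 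Two payoffs follow. First, the chamber-map property is automatic, since $\omega_c$ is a composite of chamber maps ($\wpi_R$, $\sigma_k$, and an isomorphism onto $\Sigma^+_R$), which replaces the delicate tracking of Birkhoff strata in your last paragraph. Second, the verification that $\omega_c(x)\op_R x$ uses precisely Corollary~\ref{corollary:oppositionRandk}, namely $\pi_R^{-1}(B^-_kB^+_k)=\oB^-_R\oB^+_R$: one checks opposition over $k$ (where the twin-apartment theory provides it) and pulls it back to $R$. Your reference to Proposition~\ref{prop:kerpiR} shows you were near this ingredient, but the logic needs to be turned around: do not try to decompose $G^{\min}_R$ itself --- pass to $k$, project there, lift only the apartment, and invoke Corollary~\ref{corollary:oppositionRandk}.
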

\begin{proof}
WLOG we may assume that $\epsilon=+$ (the proof is similar for $\epsilon=-$), and that $c=c_0^+$ (because $G^{\min}_R$ is transitive on $\Opp(\CCC)$). 

Recall from Remark~\ref{remark:Bruhat_and_Levidec_over_fields}(1) that the pairs $(B^{\pm}_k,N_k)$ form a twin BN-pair for $G^{\min}_k=G_k$. In particular, the chamber systems $\Delta^{\pm}_k:=(\CCC_{\pm}(G_k),(\sim_{i})_{i\in I})$ form a twin building (see \cite[\S6.3.3]{BrownAbr}), and we denote by $\delta^*_k$ the associated codistance function on $(\Delta^+_k,\Delta^-_k)$, as in \cite[Definition~5.133]{BrownAbr} (in particular, given two chambers $c_+$ of $\Delta^+_k$ and $c_-$ of $\Delta^-_k$, we have $\delta^*_k(c_+,c_-)=1_{\WW}$ if and only if $c_+ \op_k c_-$). Let also $\delta^{\pm}_k\co \Ch(\Delta^{\pm}_k)\times\Ch(\Delta^{\pm}_k)\to\WW$ denote the Weyl distance on $\Delta^{\pm}_k$ as in \cite[Definition~5.1.1]{BrownAbr}, where $\Ch(\Delta^{\pm}_k)=G_k/B^{\pm}_k$ is the set of chambers of $\Delta^{\pm}_k$. 

Let $$\opi_R\co G_R^{\min}\to G^{\min}_k=G_k$$ be the canonical map, so that $\pi_R=\opi_R\circ\varphi_R$, and consider the chamber maps
$$\wpi_R\co \CCC_{\pm}(G^{\min}_R)\to \Delta^{\pm}_k:g\oB^{\pm}_R\mapsto \opi_R(g)B^{\pm}_k.$$ Write $\overline{c}_0^{\pm}=\wpi_R(c_0^{\pm})=B^{\pm}_k$ for the fundamental chamber of $\Delta^{\pm}_k$. 

Let $\Sigma^+_R:=\{\widetilde{w}c_0^+ \ | \ w\in\WW\}\subseteq \CCC_{+}(G^{\min}_R)$, so that $\wpi_R|_{\Sigma^+_R}\co\Sigma^+_R\to\Sigma^+_k$ is an isomorphism from $\Sigma^+_R$ onto the fundamental apartment $\Sigma^+_k=\{\widetilde{w}\overline{c}_0^+ \ | \ w\in\WW\}$ of $\Delta^+_k$. For each $x\in \Ch(\Delta^{-}_k)$, let $\sigma_k(x)$ denote the unique chamber of $\Sigma^+_k$ such that $\delta^*_k(\overline{c}_0^+,x)=\delta^+_k(\overline{c}_0^+,\sigma_k(x))$. Then $\sigma_k\co \Ch(\Delta^{-}_k)\to \Sigma^+_k$ is a chamber map (see \cite[Lemma~5.139(1)]{BrownAbr}) such that $x \op_k\sigma_k(x)$ for all $x\in\Ch(\Delta^{-}_k)$ (see \cite[Corollary~5.141(1)]{BrownAbr}).

Define the chamber map $\omega_c\co \CCC_-(G^{\min}_R)\to \Sigma^+_R\subseteq\CCC_+(G^{\min}_R)$ by setting $$\omega_c(x):=(\wpi_R|_{\Sigma^+_R})\inv\sigma_k(\wpi_R(x))\quad\textrm{for all $x\in \CCC_-(G^{\min}_R)$.}$$
Let now $x\in\CCC_-(G^{\min}_R)$, say $x=gc_0^-$ for some $g\in G^{\min}_R$, so that $\wpi_R(x)=\opi_R(g)\overline{c}_0^{-}$. Write $\sigma_k(\wpi_R(x))=\widetilde{w}\overline{c}_0^{+}$ for some $w\in\WW$. Since $\sigma_k(\wpi_R(x)) \op_k \wpi_R(x)$, we have $\opi_R(g)\inv \widetilde{w}\overline{c}_0^{+} \op_k \overline{c}_0^{-}$, and hence $\opi_R(g\inv \widetilde{w})=\opi_R(g)\inv \widetilde{w}\in B^-_kB^+_k$. By Corollary~\ref{corollary:oppositionRandk}, this implies that $g\inv \widetilde{w}\in \oB^{-}_R\oB^{+}_R$, that is, $g\inv \widetilde{w} c_0^+ \op_R c_0^-$. Hence $\omega_c(x)=\widetilde{w}c_0^+ \op_R gc_0^-=x$.

Finally, if $x\in c^{\op_R}$, then $x=gc_0^-$ for some $g\in \oB^{+}_R$, and hence $\wpi_R(x)=\opi_R(g)\overline{c}_0^- \op_k \overline{c}_0^+$. In particular, $\omega_c(x)=(\wpi_R|_{\Sigma^+_R})\inv( \overline{c}_0^+)=c$.
\end{proof}

\begin{corollary}\label{corollary:TCS}
Assume that $A$ is $2$-spherical and that $R$ is a local ring satisfying (co). Then $\CCC(G^{\min}_R)$ is a twin chamber system.
\end{corollary}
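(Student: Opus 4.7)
The plan is straightforward: the four preceding lemmas in this section were each dedicated to verifying one of the axioms (TCS1)--(TCS4) for the opposition datum $\CCC(G^{\min}_R)$ under precisely the hypotheses stated in the corollary ($A$ being $2$-spherical and $R$ being a local ring satisfying (co)). So the proof will just consist in collecting these four statements.

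More precisely, I would first recall that the underlying data $(\CCC_+,(\sim_i)_{i\in I})$, $(\CCC_-,(\sim_i)_{i\in I})$ and the opposition relation $\op_R$ constitute an opposition datum by construction. I would then invoke, in order, the four lemmas just proved to conclude that all four axioms (TCS1), (TCS2), (TCS3) and (TCS4) hold, so that $\CCC(G^{\min}_R)$ is indeed a twin chamber system.

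I would also briefly remark on where each of the standing hypotheses is used, since the corollary's hypotheses are the union of those used in the lemmas: the condition (co) is needed to apply Proposition~\ref{prop:simplegenerationU+} in the verification of (TCS3) (and to apply Corollary~\ref{corollary:oppositionRandk} in the verification of (TCS4), via Proposition~\ref{prop:kerpiR}), $2$-sphericity of $A$ is needed throughout Section~\ref{section:opposition_local_Bezout_domains}, and $R$ being local (hence of stable rank~$1$, so a $\GE$-ring) is used in the verification of (TCS2) via the decomposition $\SL_2(R)=U_2^+(R)U_2^-(R)B_2^+(R)$, and also to have the identification $G^{\min}_k=G_k$ available for the construction of $\omega_c$ in (TCS4).

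There is no real obstacle here: the work has already been done in the four preceding lemmas, and this corollary is merely the formal statement that collects them. The proof is a one-liner of the form ``This follows immediately from the four preceding lemmas.''
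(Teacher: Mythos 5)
Your proposal is correct and matches the paper exactly: the corollary has no separate proof in the paper, being an immediate consequence of the four preceding lemmas, each of which verifies one of the axioms (TCS1)--(TCS4) under the standing hypotheses of the section. Your remarks on where each hypothesis enters are accurate but not part of the paper's (implicit) argument.
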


\begin{lemma}\label{lemma:CCCsimplyconnectedKM}
Assume that $R$ is a Bezout domain, with field of fractions $\KK$. Then the map $$\CCC_{\pm}(G^{\min}_R)\to \CCC_{\pm}(G_{\KK}):g\oB^{\pm}_R\to gB^{\pm}_{\KK}$$ is an isomorphism of chamber systems. In particular, $\CCC$ is simply connected.
\end{lemma}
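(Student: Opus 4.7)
The plan is to show that $\psi_{\pm}\co g\oB^{\pm}_R\mapsto gB^{\pm}_{\KK}$ defines an isomorphism of chamber systems $\CCC_{\pm}(G^{\min}_R)\stackrel{\sim}{\to}\CCC_{\pm}(G_{\KK})$, after which the simple connectedness of $\CCC(G^{\min}_R)$ will follow from the standard fact that the chamber system of each half of a $2$-spherical twin building is simply connected.

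Well-definedness and the chamber-map property of $\psi_{\pm}$ are immediate from the inclusions $\oB^{\pm}_R=T_R\oU^{\pm}_R\subseteq B^{\pm}_{\KK}$ and $\oP^{\pm}_{iR}\subseteq P^{\pm}_{i\KK}$. For injectivity and for the fact that $\psi_{\pm}\inv$ is again a chamber map, I would invoke parts (2) and (1) of Theorem~\ref{thm:UbarRisGminRcapUK}, which (under the assumptions in force in Section~\ref{section:TCSofGminR} that $R$ is local and satisfies (co)) yield the identities $G^{\min}_R\cap B^{\pm}_{\KK}=\oB^{\pm}_R$ and $G^{\min}_R\cap P^{\pm}_{i\KK}=\oP^{\pm}_{iR}$. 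For surjectivity it suffices to show $G_{\KK}=G^{\min}_R\cdot B^{\pm}_{\KK}$; for $\epsilon=+$ this comes from the computation performed in the induction step of the proof of Proposition~\ref{prop:Bezout_Steinberg}, which gives $B^+_{\KK}\widetilde{w}B^+_{\KK}=Y_{i_1}\cdots Y_{i_d}B^+_{\KK}$ for every reduced expression $w=s_{i_1}\cdots s_{i_d}$, combined with the Bruhat decomposition $G_{\KK}=\coprod_{w\in\WW}B^+_{\KK}\widetilde{w}B^+_{\KK}$ and the containment $Y_i\subseteq\oG_{iR}\subseteq G^{\min}_R$. The case $\epsilon=-$ then follows by applying the Cartan--Chevalley involution $\omega_{\KK}$, which preserves $G^{\min}_R$ and swaps $B^+_{\KK}$ with $B^-_{\KK}$.

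Once the isomorphisms $\psi_{\pm}$ are in place, the ``in particular'' statement reduces to the simple connectedness of each $\CCC_{\pm}(G_{\KK})$. By Remark~\ref{remark:Bruhat_and_Levidec_over_fields}(1), this chamber system is one half of the thick $2$-spherical twin building attached to the twin BN-pair of $G_{\KK}$, and its simple connectedness is provided by \cite[Theorem~4.3]{Ron89} (cf.\ Remark~\ref{remark:twin_buildings_are_TCS}). I do not anticipate any serious obstacle: all the real work has already been packaged into Theorem~\ref{thm:UbarRisGminRcapUK} and Proposition~\ref{prop:Bezout_Steinberg}, and the role of the present lemma is essentially to translate those algebraic statements about $G^{\min}_R$ into the language of chamber systems.
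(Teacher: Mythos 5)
Your proposal is correct and follows essentially the same path as the paper: both reduce surjectivity of $\psi_{\pm}$ to the $\SL_2$-level statement $\SL_2(\KK)=\SL_2(R)B_2^+(\KK)$ from Lemma~\ref{lemma:sl2kK_Steinberg} (the paper writes out a fresh induction over $\oG_{i_0\KK}\cdots\oG_{i_d\KK}B^{\pm}_{\KK}$, while you reuse the computation already present in the proof of Proposition~\ref{prop:Bezout_Steinberg} and then apply the Cartan--Chevalley involution for the negative sign), both derive injectivity and the inverse-chamber-map property from Theorem~\ref{thm:UbarRisGminRcapUK}, and both conclude by citing simple connectedness of building chamber systems. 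You also correctly flag that Theorem~\ref{thm:UbarRisGminRcapUK} requires the running hypotheses of Section~\ref{section:TCSofGminR} ($R$ local, $A$ $2$-spherical, (co)) in addition to the Bezout-domain assumption stated in the lemma.
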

\begin{proof}
By Lemma~\ref{lemma:sl2kK_Steinberg}, we have $\SL_2(\KK)=\SL_2(R) B^+_2(\KK)$. In particular, for all $i_0,\dots,i_d\in I$, we have $\oG_{i_0\KK}\dots\oG_{i_d\KK}B^{\pm}_{\KK}\subseteq\oG_{i_0R}\dots\oG_{i_dR}B^{\pm}_{\KK}$, as follows from an induction on $d$ using that
$$\oG_{i_0\KK}\oG_{i_1\KK}\dots\oG_{i_d\KK}B^{\pm}_{\KK}\subseteq\oG_{i_0R}B^{\pm}_{\KK}\oG_{i_1\KK}\dots\oG_{i_d\KK}B^{\pm}_{\KK}\subseteq \oG_{i_0R}\oG_{i_1\KK}\dots\oG_{i_d\KK}B^{\pm}_{\KK},$$
where the last inclusion follows from the fact that $B^{\pm}_{\KK}\oG_{i\KK}=\oG_{i\KK}B^{\pm}_{\KK}$ for all $i\in I$ (see Lemma~\ref{lemma:BGiisGiB}). This shows that $$G_{\KK}=\G_A^{\min}(R)B^{\pm}_{\KK}.$$
Since $B^{\pm}_{\KK}\cap G_A^{\min}(R)=\oB^{\pm}_R$ and $P^{\pm}_{i\KK}\cap G_A^{\min}(R)=\oP_{iR}^{\pm}$ for all $i\in I$ by Theorem~\ref{thm:UbarRisGminRcapUK}, the map $\CCC_{\pm}(G^{\min}_R)\to \CCC_{\pm}(G_{\KK}):g\oB^{\pm}_R\to gB^{\pm}_{\KK}$ is an isomorphism of chamber systems. Since $\CCC_{\pm}(G_{\KK})$ are buildings and hence simply connected (see \cite[Theorem~4.3]{Ron89}), the claim follows.
\end{proof}

Recall that a local ring that is a Bezout domain is precisely a \textbf{valuation ring}, namely, a domain $R$ with field of fractions $\KK$ such that for any nonzero $x\in\KK$, at least one of $x$ or $x\inv$ belongs to $R$. Corollary~\ref{corollary:TCS} and Lemma~\ref{lemma:CCCsimplyconnectedKM} sum up to the following theorem.
\begin{theorem}\label{thm:KMlocalOppSC}
Assume that $A$ is $2$-spherical and that $R$ is a valuation ring satisfying (co). Then $\CCC(G^{\min}_R)$ is a simply connected twin chamber system, and $G^{\min}_R$ acts transitively on $\CCC(G^{\min}_R)$.
\end{theorem}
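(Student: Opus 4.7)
The statement packages three facts already established in this final section, so the proof will essentially be an assembly rather than a fresh argument. The plan is to invoke the characterisation of valuation rings (namely, local rings that are Bezout domains), and then to reduce to the two previous results that have been proven for these two separate hypotheses on $R$.

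First, I would observe that by the definition recalled just before the theorem, a valuation ring is exactly a local domain whose finitely generated ideals are principal, i.e. a local ring that is a Bezout domain. Hence both hypothesis sets are in force: $R$ is a local ring satisfying (co), and $R$ is a Bezout domain (with field of fractions $\KK$).

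Next, applying Corollary~\ref{corollary:TCS} (which has been verified via the four lemmas checking axioms (TCS1)--(TCS4) for $\CCC(G^{\min}_R)$), I conclude that $\CCC(G^{\min}_R)$ is a twin chamber system. The transitivity of the $G^{\min}_R$-action on $\CCC(G^{\min}_R)$ is immediate from the definition of $\CCC(G^{\min}_R)$, where each of $\CCC_{\pm}(G^{\min}_R)$ was defined as $G^{\min}_R/\oB^{\pm}_R$ with $G^{\min}_R$ acting by left translation: transitivity on $\Opp(\CCC(G^{\min}_R))$ follows from the fact that the stabiliser of the fundamental chamber $c_0^{\pm}$ is $\oB^{\pm}_R$, combined with the Birkhoff-type opposition relation defined via $\oB^{-}_R\oB^{+}_R$.

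Finally, to obtain simple connectedness, I would invoke Lemma~\ref{lemma:CCCsimplyconnectedKM}: since $R$ is a Bezout domain with field of fractions $\KK$, the chamber systems $\CCC_{\pm}(G^{\min}_R)$ are isomorphic to the chamber systems $\CCC_{\pm}(G_{\KK})$ of the two halves of the twin building associated to the twin BN-pair $(B^+_{\KK},B^-_{\KK},N_{\KK})$ of $G_{\KK}$ (Remark~\ref{remark:Bruhat_and_Levidec_over_fields}(1)); the latter are simply connected by \cite{Ron89}. There is no genuine obstacle here since all the technical work has been front-loaded into Corollary~\ref{corollary:TCS} and Lemma~\ref{lemma:CCCsimplyconnectedKM}; the only thing to be careful about is the bookkeeping that the hypotheses ``local ring satisfying (co)'' and ``Bezout domain'' are simultaneously assumed, which is exactly the content of being a valuation ring satisfying (co).
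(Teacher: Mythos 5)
Your proof is correct and matches the paper's approach exactly: the paper's entire argument consists of the single sentence that Corollary~\ref{corollary:TCS} and Lemma~\ref{lemma:CCCsimplyconnectedKM} "sum up to" the theorem, using the identification of valuation rings with local Bezout domains stated immediately before the theorem. The transitivity is, as you note, already observed in the definition of $\CCC(G^{\min}_R)$.
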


We can now prove our main theorem.

\begin{theorem}\label{thm:main_amalgamKM}
Assume that $A$ is $2$-spherical and that $R$ is a valuation ring satisfying (co).
Then $G^{\min}_R$ is the amalgamated product of the system of subgroups $\{T_R\oG_{JR}=T_RG_{JR} \ | \ |J|\leq 2\}$. In particular, the morphism $\varphi_R\co G_R\to G_R^{\min}$ is an isomorphism, which restricts to isomorphisms $U^{\pm}_{R}\to U^{\pm}_{\KK}\cap G^{\min}_R$, where $\KK$ is the field of fractions of $R$.
\end{theorem}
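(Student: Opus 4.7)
The plan is to combine Theorem~\ref{thm:KMlocalOppSC} with the general amalgam presentation from Corollary~\ref{corollary:sctwincs_implies_CTA}, and then deduce that $\varphi_R\co G_R\to G^{\min}_R$ is an isomorphism.

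First, I would invoke Theorem~\ref{thm:KMlocalOppSC} to obtain the simply connected twin chamber system $\CCC := \CCC(G^{\min}_R)$ with its transitive action of $G^{\min}_R$. Taking $(c_0^+,c_0^-)\in\Opp(\CCC)$ as base chamber, Corollary~\ref{corollary:sctwincs_implies_CTA} then presents $G^{\min}_R$ as the amalgamated product of the subgroups $G_J$ with $|J|\leq 2$, where $G_J$ stabilises both $R_J(c_0^+)$ and $R_J(c_0^-)$. Next, I would identify these stabilisers: the stabiliser of $R_J(c_0^\pm)$ in $G^{\min}_R$ is $\oP^\pm_{JR}$, so $G_J = \oP^+_{JR}\cap \oP^-_{JR}$. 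Since $R$ is a domain (hence satisfies condition (\ref{Bir})), Proposition~\ref{prop:intersection_parabolic_opposite_signs} gives $G_J = T_R\oG_{JR}$. Moreover, as $A_J$ is spherical, Theorem~\ref{theorem:spherical_injectivity} applied to the Kac--Moody root datum $\DDD(J)$ (whose associated group is $T_RG_{JR}\subseteq G_R$, as in the proof of Lemma~\ref{lemma:GJRcappiRinvB}) shows that $\varphi_R$ restricts to an isomorphism $T_RG_{JR}\stackrel{\sim}{\to} T_R\oG_{JR}$, justifying the equality $T_R\oG_{JR}=T_RG_{JR}$ in the statement.

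The compatible inclusions $T_RG_{JR}\hookrightarrow G_R$ for $|J|\leq 2$ then induce, by the universal property of the amalgam, a morphism $\psi\co G^{\min}_R\to G_R$ such that $\varphi_R\circ\psi = \Id_{G^{\min}_R}$ by construction. Since $R$ is local, it is a $\GE$-ring, so $\varphi_R$ is surjective by Remark~\ref{remark:GE2rings}(1); as $G_R$ is generated by $T_R$ together with the $G_{iR}$ ($i\in I$), which all lie in the image of $\psi$, the morphism $\psi$ is also surjective, so $\varphi_R$ and $\psi$ are mutually inverse isomorphisms. Finally, Theorem~\ref{thm:UbarRisGminRcapUK}(3) gives $\varphi_R(U^\pm_R) = \oU^\pm_R = G^{\min}_R\cap U^\pm_{\KK}$, and since $\varphi_R$ is now known to be bijective, this restricts to the desired isomorphism $U^\pm_R\stackrel{\sim}{\to} U^\pm_{\KK}\cap G^{\min}_R$.

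I expect the main technical point to be the correct identification of each $G_J$ with $T_RG_{JR}$ inside $G_R$ via $\varphi_R$: this requires carefully invoking Theorem~\ref{theorem:spherical_injectivity} on each rank-$\leq 2$ sub-datum $\DDD(J)$ (rather than on $\DDD$ itself, which is what we are ultimately trying to establish) and then matching it with the abstract amalgam coming from Corollary~\ref{corollary:sctwincs_implies_CTA}; everything else is essentially a formal assembly of earlier results.
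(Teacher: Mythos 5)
Your argument is correct and follows essentially the same route as the paper: Theorem~\ref{thm:KMlocalOppSC} plus Corollary~\ref{corollary:sctwincs_implies_CTA} for the amalgam, Proposition~\ref{prop:intersection_parabolic_opposite_signs} (with condition (\ref{Bir})) to identify the vertex groups as $T_R\oG_{JR}$, Theorem~\ref{theorem:spherical_injectivity} on the rank-$\leq 2$ sub-data $\DDD(J)$ to see $T_R\oG_{JR}=T_RG_{JR}$, and Theorem~\ref{thm:UbarRisGminRcapUK}(3) for the unipotent parts. The only cosmetic difference is that you package the injectivity of $\varphi_R$ by constructing an explicit inverse $\psi$ via the universal property, whereas the paper phrases it as a chain of injections $\mathrm{CT}_{\DDD}(R)\to G_R\to G^{\min}_R$ whose composite is the amalgam isomorphism; these are the same argument.
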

\begin{proof}
Note that the restriction of $\varphi_R$ to the subgroups $T_RG_{JR}$ is injective by Theorem~\ref{theorem:spherical_injectivity}, whence the equality $T_R\oG_{JR}=T_RG_{JR}$. By Theorem~\ref{thm:KMlocalOppSC}, $\CCC(G^{\min}_R)$ is a simply connected twin chamber system and $G^{\min}_R$ acts transitively on $\CCC(G^{\min}_R)$. Moreover, if $J\subseteq I$ with $|J|\leq 2$, then the stabiliser in $G^{\min}_R$ of the $J$-residues of $c_+$ and $c_-$ is $\oP_{JR}^{+}\cap \oP_{JR}^-$, and therefore coincides with $T_R\oG_{JR}$ by Proposition~\ref{prop:intersection_parabolic_opposite_signs} (recall that domains satisfy (\ref{Bir})). The first statement thus follows from Corollary~\ref{corollary:sctwincs_implies_CTA}.

Denote by $\mathrm{CT}_{\DDD}(R)$ the amalgamated product of the subgroups $\{T_RG_{JR} \ | \ |J|\leq 2\}$, so that we have an isomorphism $\mathrm{CT}_{\DDD}(R)\to G^{\min}_R$. Since the relations defining $\mathrm{CT}_{\DDD}(R)$ are satisfied in $G_R$, we then have canonical injective morphisms
\begin{equation}
\mathrm{CT}_{\DDD}(R)\stackrel{\sim}{\to} \G_{\DDD}(R)\stackrel{\sim}{\to}\G^{\min}_{\DDD}(R)\hookrightarrow \G^{\min}_{\DDD}(\KK)=\G_{\DDD}(\KK).
\end{equation}
The fact that the isomorphism $\varphi_R\co G_R\to G_R^{\min}$ restricts to  isomorphisms $U^{\pm}_{R}\to U^{\pm}_{\KK}\cap G^{\min}_R$ follows from Theorem~\ref{thm:UbarRisGminRcapUK}(3).
\end{proof}

\begin{remark}
Let $A=(a_{ij})_{i,j\in I}$ and $R$ be as in Theorem~\ref{thm:main_amalgamKM}, and assume that $\DDD=\DDD_A^{\mathrm{sc}}$ (see \S\ref{subsection:KMRDandtori}), so that $T_R=\langle r^{h_i} \ | \ r\in R^{\times}, \ i\in I\rangle\cong (R^{\times})^{|I|}$. For each $J\subseteq I$ with $|J|\leq 2$, set $$T_{JR}:=\langle  r^{h_i} \ | \ r\in R^{\times}, \ i\in J\rangle\quad\textrm{and}\quad T^J_R:=\langle  r^{h_i} \ | \ r\in R^{\times}, \ i\in I\setminus J\rangle,$$ so that $T_R=T_{JR}\times T^J_R$. Note that $G_{JR}\cap T_R=T_{JR}$ (this follows for instance from Remark~\ref{remark:Bruhat_and_Levidec_over_fields}(2) applied to $G_{J\KK}$ and $G_{\KK}$). Hence $T_RG_{JR}=T^{J}_R\ltimes G_{JR}$ is the quotient of the free product $T^{J}_R* G_{JR}$ by the relations $$r^{h_i}x_{\pm\alpha_j}(a)(r^{h_i})\inv = x_{\pm\alpha_j}(ar^{\pm a_{ij}})\quad\textrm{for all $i\in I\setminus J$, \ $j\in J$, \ $r\in R^{\times}$ and $a\in R$.}$$
Since these relations are satisfied in $G_{\{i,j\}R}$, and similarly the defining relations 
$$r^{h_i}s^{h_i}=(rs)^{h_i}\quad\textrm{and}\quad r^{h_i}s^{h_j}=s^{h_j}r^{h_i}\quad\textrm{for all $i,j\in I\setminus J$ and $r,s\in R^{\times}$}$$
of $T^{J}_R$ are satisfied in $G_{\{i,j\}R}$, it readily follows that $\mathrm{CT}_{A}(R)=\mathrm{CT}_{\DDD}(R)$ is the amalgamated product of the system of subgroups $\{G_{JR} \ | \ |J|\leq 2\}$.
\end{remark}

\begin{remark}\label{remark:proofPropB}
To see that Theorem~\ref{thm:main_amalgamKM} implies Corollary~\ref{corintro:Laurentpolynuniversal}, consider a valuation ring $R$ satisfying (co) and with field of fractions $\KK$, and let $\bar{A}=(a_{ij})_{i,j\in \bar{I}}$ be a Cartan matrix. Write $\bar{I}=\{1,\dots,\ell\}$ and set $\bar{\DDD}:=\DDD_{\bar{A}}^{\sico}$.  
Let $\bar{\Phi}$ be the root system of $\bar{A}$, with simple roots $\alpha_1,\dots,\alpha_{\ell}$, and let $\theta$ denote the highest root of $\bar{\Phi}$. Assume that $\bar{\Phi}$ is irreducible and not of type $A_1$. Recall that $\G_{\bar{A}}(\KK[t,t\inv])=\CDem_{\bar{\Phi}}(\KK[t,t\inv])$ by \cite{Mor82}, where $\G_{\bar{A}}=\G_{\bar{\DDD}}$. We have to show that the canonical map $$\bar{\psi}_R\co\G_{\bar{A}}(R[t,t\inv])\to \G_{\bar{A}}(\KK[t,t\inv])=\CDem_{\bar{\Phi}}(\KK[t,t\inv])$$ is injective. 

Recall from Remark~\ref{remark:presentation_Chevalley_groups} that $\G_{\bar{A}}(R[t,t\inv])$ has generators $\{\bar{x}_{\alpha}(P) \ | \ P\in R[t,t\inv]\}$ satisfying the following relations, for all $\alpha,\beta\in\bar{\Phi}$, $m,n\in\ZZ$, $a,b\in R$ and $r,s\in R^{\times}$, where we set 
$$n_{\alpha}(rt^m):=\bar{x}_{\alpha}(rt^m)\bar{x}_{-\alpha}(r\inv t^{-m})\bar{x}_{\alpha}(rt^m)\quad\textrm{and}\quad h_{\alpha}(rt^m):=n_{\alpha}(1)\inv n_{\alpha}(r\inv t^{-m}):$$
\begin{enumerate}
\item[($\bar{\mathrm{U}}$)]
$\bar{x}_{\alpha}(at^m)\bar{x}_{\alpha}(bt^n)=\bar{x}_{\alpha}(at^m+bt^n)$,
\item[($\bar{\mathrm{C}}$)]
For $\beta\neq\pm\alpha$: 
$$[\bar{x}_{\alpha}(at^m),\bar{x}_{\beta}(bt^n)]=\prod_{\stackrel{\gamma\in ]\alpha,\beta[_{\NN}}{\gamma=i\alpha+j\beta}}\bar{x}_{\gamma}(\bar{C}^{\alpha\beta}_{ij}a^ib^jt^{im+jn}),$$
\item[($\bar{\mathrm{T}}$)]
$h_{\alpha}(rt^m)h_{\alpha}(st^n)=h_{\alpha}(rst^{m+n})$.
\end{enumerate}
Set $I:=\{0,1,\dots,\ell\}$, and let $A=(a_{ij})_{i,j\in I}$ be the extended matrix of $A$ (see  \cite[\S5.3]{KMGbook}). Since $\bar{A}$ is not of type $A_1$, the GCM $A$ is $2$-spherical. Let $\DDD$ be the Kac--Moody root datum associated to $A$ defined in \cite[p.160]{KMGbook} (in particular, $T_R=\langle r^{h_i} \ | \ r\in R^{\times}, \ i\in\bar{I}\rangle$ is the same torus for both $\DDD$ and $\bar{\DDD}$), so that Theorem~\ref{thm:main_amalgamKM} yields the injectivity of the canonical morphism
\begin{equation*}
\psi_R\co\mathrm{CT}_{\DDD}(R)\to\G_{\DDD}(\KK)=\CDem_{\bar{\Phi}}(\KK[t,t\inv]).
\end{equation*}
It is given by the following assignment, for all $i\in\bar{I}$ and $a\in R$ (see \cite[\S7.6]{KMGbook}):
\begin{equation}
x_{\pm\alpha_i}(a)\mapsto\bar{\psi}_R(\bar{x}_{\pm\alpha_i}(a)), \quad x_{\alpha_0}(a)\mapsto \bar{\psi}_R(\bar{x}_{-\theta}(at))\quad\textrm{and}\quad x_{-\alpha_0}(a)\mapsto \bar{\psi}_R(\bar{x}_{\theta}(at\inv)).
\end{equation}
To show that $\bar{\psi}_R$ is injective, it thus suffices to check that the assignment
\begin{equation}
x_{\pm\alpha_i}(a)\mapsto \bar{x}_{\pm\alpha_i}(a), \quad x_{\alpha_0}(a)\mapsto \bar{x}_{-\theta}(at), \quad x_{-\alpha_0}(a)\mapsto \bar{x}_{\theta}(at\inv)\quad\textrm{for $i\in\bar{I}$ and $a\in R$}
\end{equation}
defines, for each $J\subseteq I$ with $|J|\leq 2$, a morphism $T_RG_{JR}\to \G_{\bar{A}}(R[t,t\inv])$, and hence also a surjective morphism $\mathrm{CT}_{\DDD}(R)\to \G_{\bar{A}}(R[t,t\inv])$ whose composition with $\bar{\psi}_R$ coincides with $\psi_R$ (the surjectivity follows for instance from \cite[Lemma~7.89]{KMGbook}). Noting that under this assignment, 
\begin{equation}\widetilde{s}_0(r):=x_{\alpha_0}(r)x_{-\alpha_0}(r\inv)x_{\alpha_0}(r)\mapsto \bar{x}_{-\theta}(rt)\bar{x}_{\theta}(r\inv t\inv)\bar{x}_{-\theta}(rt)=n_{-\theta}(rt)
\end{equation}
and 
\begin{equation}r^{h_0}:=\widetilde{s}_0(1)\inv\widetilde{s}_0(r\inv)\mapsto n_{-\theta}(t)\inv n_{-\theta}(r\inv t)=h_{-\theta}(t\inv)\inv h_{-\theta}(rt\inv)=h_{-\theta}(r)
\end{equation}
for all $r\in R^{\times}$, it is now straightforward to check that the image in $\G_{\bar{A}}(R[t,t\inv])$ of the defining relations of $T_RG_{JR}$ (see for instance Definition~\ref{definition:constructiveTitsfunctor} for the group $\G_{\DDD(J)}(R)=T_RG_{JR}$ as in the proof of Lemma~\ref{lemma:GJRcappiRinvB}) indeed follow from the relations ($\bar{\mathrm{U}}$), ($\bar{\mathrm{C}}$) and ($\bar{\mathrm{T}}$) (note that the integral constants in the commutator relations in $T_RG_{JR}$ and $\G_{\bar{A}}(R[t,t\inv])$ are necessarily the same, since both coincide with the corresponding constants in $\CDem_{\bar{\Phi}}(\KK[t,t\inv])$).
\end{remark}

\bibliographystyle{amsalpha} 
\bibliography{injectivityKMbib}
\end{document}